\pgfplotsset{compat=newest} 
\newcommand{\mbold}[1]{\boldsymbol{#1}}
\newcommand{\expec}[1]{\mathbbm{E}\left[#1\right]}
\NewDocumentCommand{\evalat}{sO{\big}mm}{%
  \IfBooleanTF{#1}
   {\mleft. #3 \mright|_{#4}}
   {#3#2|_{#4}}%
}
\newtheorem{theorem}{Theorem}
\newtheorem{corollary}{Corollary}
\newtheorem{lemma}{Lemma}
\title{Generalised shot noise representations of stochastic systems
driven by non-Gaussian L\'{e}vy processes }
\author
{
    Marcos Tapia Costa
    \thanks{Department of Engineering, Trumpington Street,
	University of Cambridge, Cambridge, CB2 1PZ, UK. Email: \texttt{\href{mailto:mt773@cantab.ac.uk} {mt773@cantab.ac.uk}}}
\and
        Ioannis Kontoyiannis 
    \thanks{Statistical Laboratory, DPMMS,
	University of Cambridge,
	Centre for Mathematical Sciences,
        Wilberforce Road,
	Cambridge CB3 0WB, UK.
                Email: \texttt{\href{mailto:yiannis@maths.cam.ac.uk}%
			{yiannis@maths.cam.ac.uk}}.
	I.K.\ was supported in part by the Hellenic Foundation for Research 
	and Innovation (H.F.R.I.) under the ``First Call for H.F.R.I. Research 
	Projects to support Faculty members and Researchers and the 
	procurement of high-cost research equipment grant,'' project 
	number 1034.
        }
\and
	Simon Godsill
    \footnotemark[1]
    \thanks{Email: \texttt{\href{mailto:sjg30@cam.ac.uk}%
			{sjg30@cam.ac.uk}}.
        }
}
\date{\today}
\begin{document}

\maketitle

\begin{abstract}
We consider the problem of obtaining effective representations for the solutions of linear, vector-valued stochastic differential equations (SDEs) driven by non-Gaussian pure-jump L\'{e}vy processes, and we show how such representations lead to efficient simulation methods. The processes considered constitute 
a broad class of models that find application across the physical and biological sciences, mathematics, finance and engineering. 
Motivated by important relevant problems in statistical inference, we derive 
new, generalised shot-noise simulation methods whenever a normal variance-mean  (NVM) mixture representation exists for the driving L\'{e}vy process, including the generalised hyperbolic, normal-Gamma, and normal tempered stable cases. Simple, explicit conditions are identified for the convergence of the residual of a truncated shot-noise representation to a Brownian motion in the case of the pure L\'{e}vy process, and to a Brownian-driven SDE in the case of the L\'{e}vy-driven SDE. These results provide Gaussian approximations to the small jumps of the process under the NVM representation. The resulting representations are of particular importance in state inference and parameter estimation for L\'{e}vy-driven SDE models, since the resulting conditionally Gaussian structures can be readily incorporated into latent variable inference methods such as Markov chain Monte Carlo (MCMC), Expectation-Maximisation (EM), and sequential Monte Carlo.     

\end{abstract}

\section{Introduction}
    Lévy processes are commonly employed in the study of asset returns, derivative pricing models, and in the prediction of high-frequency trading returns~\cite{LevyInFinance,VarianceGammaOptionPricing,HighFrequencyReturns}. In derivative pricing in particular, the Black-Scholes model specifies the dynamics of a financial market which holds derivative instruments such as options, futures, and swaps~\cite{BlackScholesModel}. Yet, the assumption that option prices are modelled by a stationary log-Gaussian process often fails to be validated by empirical data~\cite{BlackScholesModel}, motivating research into the relaxation of this assumption. Lévy processes such as 
    the normal tempered stable~\cite[Section~9]{NormalModifiedStable}, the generalised hyperbolic~\cite{GIGinFinance}, or the variance-Gamma~\cite{VarianceGammaOptionPricing,InterestRatesVG} can be used to model the variance, or volatility, of the security as a stochastic process.
    
    In biology, the generalised inverse Gaussian process has been used to model spike train activity from neurons~\cite{GIGNeuralActivity}, or to model the likelihood of extreme meteorological events with high economic and social costs~\cite{GIGHydrologic}. The tempered stable process has also been extensively studied in relation to Lévy flight models~\cite{TSLevyFlight} in physics, and the variance-Gamma has been used to analyse continuous trait evolution such as population growth~\cite{Landis2013PhylogeneticAU,RUSSO2009521}, or in the analysis of nitrate concentration in the eastern US~\cite{LevyNitrate}. More recently, the leptokurtic properties of non-Gaussian Lévy processes have been exploited in the field of object tracking~\cite{LevyObjectTracking} in order to capture abrupt changes in the kinematic state of a moving object, which in turn has applications to animal tracking and the analysis of hunting patterns.

    
    We are here concerned with the analysis of different representations of L\'{e}vy-driven stochastic processes of the form,
    \begin{equation*}
        d\boldsymbol{X}(t) = \mbold{A}\boldsymbol{X}(t)dt + \mbold{h}dW(t), \ \boldsymbol{X}(t) \in \mathbbm{R}^{P},
    \end{equation*}
where $\mbold{A}$ is a $P\times P$ matrix, $\mbold{h}\in\mathbbm{R}^P$, and $(W(t))$ is a one-dimensional non-Gaussian L\'{e}vy process.
Based on infinite series representations for \(\left(W(t)\right)\), tractable and conditionally Gaussian models can be developed for simulation and inference purposes, see, for example,~\cite{Lemke_Godsill_2015,LevyStateSpaceModel,LevyObjectTracking}, where such models and inference procedures were developed for $\alpha$-stable L\'{e}vy processes driving linear stochastic differential equations (SDEs). 
Here by contrast we study variance-mean mixture L\'{e}vy processes~\cite{LevyInFinance}, including (but not limited to) the normal-Gamma, normal tempered stable, and generalised hyperbolic processes. In particular, we provide functional central limit theorem-like results for the convergence of the residual terms when series representations of such processes are truncated at a finite level, and for the corresponding Gaussian convergence of the above SDE's residuals when it too is truncated in a corresponding fashion.  
The results are linked in spirit to the study of~\cite{SmallJumps} although posed in terms of a different truncation of small jumps. 

Lévy processes have been studied extensively via the general theory of stochastic processes with stationary, independent increments. Samorodnitsky and Taqqu~\cite{StableNonGaussianProcesses}, Küchler and Tappe~\cite{TSDistAndProcesses}, and Barndorff-Nielsen and Shephard~\cite{LevyInFinance,NormalModifiedStable,LevyBasics} consider important special cases, including the tempered stable, inverse Gaussian, and  \(\alpha\)-stable processes. The relevant properties established are then used in the definition and analysis of  broader classes of Lévy processes, such as the normal variance-mean (NVM) mixture processes~\cite{NormalModifiedStable}, which are formed by the subordination of Brownian motion to a non-negative process. Samorodnitsky and Taqqu also examine stochastic integrals with respect to \(\alpha\)-stable random noise, providing a series representation closely related to the series representation of the \(\alpha\)-stable process. Barndorff-Nielsen extends the series representation of \(\alpha\)-stable stochastic integrals to ones driven by a non-negative Lévy process~\cite[Section~8]{LevyInFinance}, and Rosinski studies an analogous representation for symmetric Gaussian mixture processes~\cite{GenShotNoiseStochIntegral}. 

The breadth and depth of the study of Lévy processes have allowed for numerous studies into their applications in modelling rare-event phenomena e.g. in finance~\cite{LevyInFinance,HighFrequencyReturns,FinancialModellingCont}, physics~\cite{TSLevyFlight}, and biology~\cite{GIGNeuralActivity,GIGHydrologic,LevyNitrate,LevyObjectTracking}.

Efficient methods for simulating Lévy processes have been critical in successfully bridging the gap between theory and application. Muller and Box outline a method of generating normal random variates~\cite{BoxMuellerMethod}, and Chambers, Mallows and Stuck study the generation of \(\alpha\)-stable random variables, both of which are summarised in work by Barndorff-Nielsen~\cite[Sections~2.3-2.4]{LevyBasics}, and Samorodnitsky and Taqqu~\cite{StableNonGaussianProcesses}. While simulation from finite-activity processes is straightforward~\cite{FinancialModellingCont}, exact simulation from infinite-activity processes, such as those considered in this work, is impossible, due to the presence of an infinite number of small jumps in any finite time interval. Khinchin~\cite{KhintchineSim}, Bondesson~\cite{BondessonSim}, and Fergusson and Klass~\cite{FergussonKlassSim} outline the inverse Lévy method for simulating jumps from infinite-activity processes with non-negative increments,~and Rosinski~\cite{GeneralisedShotNoise}  {develops  generalised shot noise methods with thinning and rejection sampling for processes with Lévy densities that cannot be simulated directly using approaches such as the inverse L\'{e}vy method.} In~\cite{GIGLevy}, Godsill and Kindap propose novel algorithms for simulating from the infinite activity generalised inverse Gaussian process by combining previous approaches, paving the way for direct simulation of generalised hyperbolic jumps.

While most simulation methods in principle require the computation of an infinite sum, truncation of the series is of course required in practice. Asmussen and Rosinski~\cite{SmallJumps} analyse the effect of strict truncation of the process jumps on the representation of the `residual process', namely, the truncation error. They provide necessary and sufficient conditions for the convergence of the residual to a diffusion process, and several authors \cite{DeligiannidisTruncation,BoundsSmallJumps} provide bounds on the rate of convergence of the residual for specific classes of Lévy processes. Further work~\cite{TotalVariationGaussian,TotalVariationGaussian2} has also examined the Gaussian representation of strictly small jumps when studying SDEs. {In the setting of SDEs driven by L\'{e}vy processes the work of \cite{Higa_Tankov_2010} provides a promising approach in which the process is simulated by ODE solvers between large jumps of the process, leading to an alternative and general methodology for nonlinear SDEs.}  In the context of the generalised shot noise representation of more complex processes, Rosinski studies instead random Poisson truncations of the epochs of the process. This methodology is applied by Godsill et al.~\cite{LevyStateSpaceModel} to the \(\alpha\)-stable process, where they provide verification of the convergence of the \(\alpha\)-stable residual to a Brownian motion. Yet, the analysis there is limited to the $\alpha$-stable normal mixture process where the mean and standard deviation are proportional to an underlying stable process, and it does not consider the much broader class of NVM mixture processes, where the mean and variance are proportional to the subordinator.

A framework for performing inference on state-space models described by linear Lévy-driven SDEs is presented in~\cite{LevyStateSpaceModel}. The problem of inference on linear Gaussian state-space models has largely been solved through the development of the Kalman filter~\cite{KalmanFilterOriginal}. Variants such as the extended Kalman filter or the unscented Kalman filter have also been introduced in order to handle non-linearities~\cite{NonLinearGaussianInference}. Inference on non-Gaussian models has been accomplished primarily through Bayesian modelling combined sequential Monte Carlo methods (outlined, e.g. in~\cite{OverviewGodsill,KantasSequentialStateSpace}), and this is exploited in~\cite{LevyStateSpaceModel} through the use of the Rao-Blackwellised particle filters to perform inference for a conditionally Gaussian state-space model driven by Lévy noise. The algorithm there uses the Kalman filter on the linear conditionally Gaussian part, and it employs particle filtering to infer the distribution of the hidden process driven by the \(\alpha\)-stable noise. The main aim of this paper is to provide the theoretical foundations with which to extend the `Lévy State Space Model'~\cite{LevyStateSpaceModel} to a broader class of processes.
         
    \section{Background: Lévy processes and series representations}
        \subsection{Lévy processes}
        A Lévy process, \((X(t))=(X(t)\;;\;t\geq 0\), is a real-valued, infinitely divisible stochastic process
        with stationary and independent increments. The log-characteristic function, commonly known as the {\em characteristic exponent} (CE) or \textit{cumulant function}, of any Lévy process is given by~\cite{JumpTypeProcesses},
        \begin{equation}
            K(t;\theta) :=\ln\mathbbm{E}\left(e^{i\theta X(t)}\right) = ti\theta a - t\frac{1}{2}b^{2}\theta^{2} + t\int_{\mathbbm{R}^*}\left[e^{i\theta x} -1 - \mathbbm{1}(|x|<1)ix\theta\right]Q(dx),
            \label{eq:LevyKhinRep}
        \end{equation}
where \(\mathbbm{R}^{*} := \mathbbm{R}\backslash \{0\}\).  The term $\mathbbm{1}(|x|<1)ix\theta$ is a centering term that ensures convergence of the CE for processes for which  $\int_{|x|<1}xQ(dx)$ is divergent, though it can be omitted for processes with finite first absolute moment. The Lévy triplet \((a, b^{2}, Q)\) uniquely defines the Lévy process~\cite{JumpTypeProcesses}, with \(a \in \mathbbm{R}\),
\(b \in [0,\infty)\), and where the \textit{Lévy measure}, \(Q(dx)\), is a Poisson process intensity measure defining the distribution of jumps in the Lévy process, satisfying,
           \begin{equation}
            \int_{\mathbbm{R}^*}\left(1 \wedge x^{2}\right)Q(dx) < \infty,
            \label{eq:LevyMeasureCondition}
        \end{equation}
        where $(a\wedge b)$ denotes the minimum value of $a$ and $b$. See, e.g.,~\cite[Section~2.3]{LevyBasics} for more details.
     
        \par
        \subsection{Subordinators\label{section:SubordinatorBackground}}
        A subordinator \((Z(t))_{t\geq0}\) is a particular case of a Lévy processes with non-negative increments, such that its paths are a.s.\ increasing~\cite{CompensatorBath}. The Lévy measure of any subordinator, \(Q_{Z}(dz)\), satisfies~\cite{LevyBasics},
        \begin{equation}
            \int_{(0,\infty)} (1\wedge z)Q_{Z}(dz) < \infty.
            \label{eq:SubordinatorRequirement}
        \end{equation}
        Observe that this is a stricter condition than that required for general Lévy processes in (\ref{eq:LevyMeasureCondition}).
        Consequently, the Lévy triplet of a subordinator has \(b^{2} = 0\) and no centering term is required~\cite{CompensatorBath}. The current work will consider only subordinators without drift, thus the CE for such a subordinator $(Z(t))$ will always
        be of the form:
        \begin{equation}
           K_Z(t;\theta):= \ln\mathbbm{E}[e^{i\theta Z(t)}] = t\int_{(0,\infty)}\left(e^{i\theta z} - 1\right)Q_{Z}(dz). \label{eq:SubordinatorCumulantFunction}
        \end{equation}
        The mean and variance of $Z(t)$, when they exist, may be obtained for all $t$ from the L{\'{e}}vy measure:
        $$
            \mathbbm{E}[Z(t)] = t\int_{(0,\infty)}z Q_Z(dz), \quad
            \mathbbm{Var}[Z(t)] = t\int_{(0,\infty)}z^{2}Q_{Z}(dz).
        $$
        
        Most but not all of the processes we consider here will have finite first and second moments because their Lévy densities decay exponentially for large jump sizes; see Sections~3.1.4 and~3.3.2 of~\cite{LevyBasics}. We will also be concerned here with so-called \textit{infinite activity} subordinators, which exhibit infinitely many jumps in any finite time interval: $Q_Z((0,\infty)) = \infty$. Combined with~(\ref{eq:LevyMeasureCondition}), this implies the presence of infinitely many \textit{small} jumps (\(|x|<1\)) within finite time intervals.
        
        \subsection{Normal Variance-Mean (NVM) processes\label{section:GeneralNormalVarianceBackground}}
        A \textit{normal variance-mean} (NVM) process is defined as \textit{time-deformed} Brownian motion, 
        where jumps in a subordinator process $(Z(t))$ drive random time deformations of an independent Brownian motion $(B(t))$
        as follows~\cite{LevyBasics}:
        \begin{equation}
            X(t) = \mu t + \mu_{W}Z(t) + \sigma_{W}B(Z(t)),  \quad t\geq 0,\; \mu, \mu_{W} \in \mathbbm{R}, \ \sigma_{W} \in (0,\infty).
            \label{eq:GeneralNormalVarianceMeanMixture}
        \end{equation}
        Here $(B(t))$ is a standard one-dimensional Brownian motion and $(Z(t))$ is a subordinator process as in the previous section.
         We limit attention to the case $\mu=0$ without loss of generality. The parameter \(\mu_{W}\) models the skewness of the jump distribution, with a fully symmetric process for \(\mu_{W} = 0\). 
 The specification of \(\left(Z(t)\right)\), coupled with the choice of $\mu_{W}$ and $\sigma_{W}$, allow for a broad family of heavy-tailed and skewed processes to be implemented.

                We can express the Lévy measure $Q$ of any such process in terms of its subordinator's Lévy measure \(Q_Z\),
        \begin{equation}
            Q(dx) = \int_{(0,\infty)}\mathcal{N}(dx;\mu_{W}z,\sigma_{W}^{2}z)Q_{Z}(dz),
            \label{eq:GeneralNormalVarianceMeanMeasure2}
        \end{equation}
        where ${\mathcal N}(\cdot;\mu,\sigma^2)$ denotes the Gaussian law with mean $\mu$ and variance $\sigma^2$. 
        Finally, if \(K_Z(t;\theta)\) is the Lévy-Khintchine exponent of \(Z(t)\) in (\ref{eq:SubordinatorCumulantFunction}), then the CE for the subordinated process is given by~\cite[Section~4.4]{FinancialModellingCont},
       {\begin{align}
            K_{X}(t;\theta) = K_Z\left(t; {{\mu_{W}\theta +i \frac{1}{2}\sigma_{W}^{2}\theta^{2}}}\right) &= t\int_{(0,\infty)}\left[e^{\left(i\mu_{W}\theta - \frac{1}{2}\sigma_{W}^{2}\theta^{2}\right)z}-1\right]Q_{Z}(dz),
            \label{eq:cumulantNVMGeneral}
        \end{align}
        from which we obtain the mean and variance directly in terms of the moments of the subordinator. Specifically, for $t\geq 0,$
        \begin{align}
            &\mathbbm{E}[X(t)] = \mu_{W}\int_{(0,\infty)}zQ_{Z}(dz) = t\mu_{W}\mathbbm{E}\left[Z(1)\right], 
            \label{eq:NVMMeanSub}\\
            &\mathbbm{Var}[X(t)] = t\mu_{W}^{2}\int_{(0,\infty)}z^{2}Q_{Z}(dz) + t\sigma_{W}^{2}\int_{0}^{\infty}zQ_{Z}(dz) = t\mu_{W}^{2}\mathbbm{Var}[Z(1)] + t\sigma_{W}^{2}\mathbbm{E}[Z(1)], 
            \label{eq:NVMVarianceSub}
        \end{align}
        when these expectations exist.
        
        We will consider examples based on the \textit{normal-Gamma, 
        normal tempered stable} and \textit{generalised hyperbolic} processes, which are obtained via  Brownian motion subordinated to the Gamma, tempered stable, and generalised inverse Gaussian processes, respectively, with Lévy measures shown in (\ref{eq:GMeasure}), (\ref{eq:TSMeasure}), (\ref{eq:GIGMeasure}), respectively,
        \begin{align}
            &Q_{Z}(dz) = \nu z^{-1}\exp\left(-\frac{1}{2}\gamma^{2}z\right)dz,\label{eq:GMeasure}\\
            &Q_{Z}(dz) = Az^{-1-\kappa}\exp\left(-\frac{1}{2}\gamma^{\frac{1}{\kappa}}z\right)dz,\label{eq:TSMeasure}\\
            &Q_{Z}(dz) = z^{-1}\exp \left(-\frac{\gamma^{2}}{2}z\right)\left[\max(0, \lambda)+\frac{2}{\pi^{2}}\int_{0}^{\infty}\frac{1}{y\left|H_{|\lambda|}(y)\right|^{2}} \exp \left( -\frac{zy^{2}}{2\delta^{2}}\right) dy\right] dz.\label{eq:GIGMeasure}
        \end{align}
        Appropriate ranges for the different parameters are specified in Section \ref{section:ExampleCases}.
        
        \subsection{Generalised shot noise representation \label{section:BackgroundGeneralisedShotNoise}}
        The shot noise representation of a Lévy process is fundamental to the simulation
        methods in~\cite{LevyStateSpaceModel},~\cite{GeneralisedShotNoise},~\cite{GIGLevy}. Adopting the perspective of viewing a Lévy process as a point process defined on \([0, T] \times \mathbbm{R}^{d}\), Rosinski~\cite{GeneralisedShotNoise} considers the equivalent representation, 
        \begin{equation}
            N = \sum_{i=1}^{\infty}\delta_{V_{i}, H(Z_{i}, U_{i})},
            \label{eq:GenShotNoisePointprocess}
        \end{equation}
        where \(\delta_x\) denotes the Dirac measure at $x$, and \(\left\{V_{i}\right\}\) is a sequence of independent and identically distributed (iid) uniforms, \(V_{i} \sim U[0,T]\), independent of \(\left\{Z_{i}, U_{i}\right\}\). Intuitively, \(\{V_{i}\}\) represent the arrival times of the jumps \(X_{i} = H(Z_{i}, U_{i})\) in the process, and $\{Z_i\}$, a non-increasing set of jump sizes drawn from the subordinator process.

                For the NVM L\'{e}vy process, $H(\cdot)$ is related to the time-domain representation of the process 
                in~(\ref{eq:GeneralNormalVarianceMeanMixture}) via:
        \begin{equation}
            H(Z_{i}, U_{i}) = \mu_{W}Z_{i} + \sigma_{W}\sqrt{Z_i}U_{i}, \quad \ U_{i} \overset{iid}{\sim} \mathcal{N}(0,1).
            \label{eq:JumpFunctionNormalVarianceMean}
        \end{equation}
        The ordered jumps $\{Z_i\}$ are typically simulated through generation of the ordered epochs $\{\Gamma_i\}$ of a standard Poisson point process, obtained by the partial sum of exponential random variables, \(e_{i} \overset{iid}{\sim} {\rm Exp}(1)\):
        $\Gamma_{i} = \Gamma_{i-1} + e_{i}.$
        Then we obtain the $i^{th}$ ordered subordinator jump through a function \(Z_{i} = h(\Gamma_{i})\) as,
        \begin{equation*}
            Z(t) = \sum_{i=1}^{\infty}h(\Gamma_{i})\mathbbm{1}(V_{i} \leq t),
        \end{equation*}
        where the map \(h(\cdot)\) can be expressed in terms of the upper tail of $Q_Z$~\cite[Section~3.4.1]{LevyBasics} as,
        $h(\Gamma_{i}) = \inf \{z \in \mathbb{R} : Q_Z([z,\infty)) < \Gamma_i\},$ 
        which may or may not be available in closed form, depending on the particular choice of \(Q_Z\). If not available, then rejection sampling or thinning methods can be employed, as in~\cite{GIGLevy}, which leads to suitable algorithms for all of the models considered here. 
    
        Substituting (\ref{eq:GenShotNoisePointprocess}) into the Lévy-Khintchine representation (\ref{eq:LevyKhinRep}), the Lévy process can be expressed as the convergent infinite series~\cite{GeneralisedShotNoise},
        \begin{equation}
            X(t) = \sum_{i=1}^{\infty}H(Z_i, U_{i})\mathbbm{1}(V_{i} \leq t) -tb_{i},
            \label{eq:GeneralisedShotNoiseProcess}
        \end{equation}
        where \(b_{i}\) is a compensator or centering term~\cite{GeneralisedShotNoise} that ensures the convergence of the series. 
         A Lévy process need not be compensated if and only if,
        \begin{equation*}
            \int_{\mathbbm{R}^*}(1 \wedge |x|)Q(dx) < \infty, 
        \end{equation*}
        see~\cite{CompensatorBath,MSCOxford}.
        In view of~(\ref{eq:SubordinatorRequirement}), any subordinator satisfies this condition, and hence so does the corresponding NVM processes; see Appendix~\ref{app:subordinator}. Therefore, none of the Lévy processes studied in this paper requires compensating terms, and we take \(b_{i} = 0 \) for all $i$ throughout.
 
\section{Random truncation of subordinator}\label{section:RandomTruncationMethod}
    Consider the problem of simulating an NVM L\'{e}vy process via the shot noise representation in the previous section.
    Suppose we have access to a simulation algorithm for subordinator generation, which is capable of producing a non-increasing set of random jumps $\{Z_i\}$, $i=1,2,\ldots$, and associated uniformly distributed random jump times $\{V_i\}$.  
    Exact simulation for infinite activity processes via~(\ref{eq:GeneralisedShotNoiseProcess}) is impossible
    with finite computational resources, so simulation of a truncated process is typically implemented. In this paper, we consider the effect of random Poisson truncations on the jumps of the subordinator process 
    as in~\cite{LevyStateSpaceModel,GeneralisedShotNoise},
\begin{align*}
    \hat{X}_{\epsilon}(t) & = \sum_{i: Z_i\geq {\epsilon}}H(Z_{i}, U_{i})\mathbbm{1}(V_{i} \leq t),\\
    X_{\epsilon}(t)     & = {X}(t) - \hat{X}_{\epsilon}(t) = \sum_{i: Z_i < \epsilon}H(Z_{i}, U_{i})\mathbbm{1}(V_{i} \leq t), 
\end{align*}
where, as before, $H$ is defined by $H(Z_{i}, U_{i}) = \mu_{W}Z_i + \sigma_{W}\sqrt{Z_i}U_{i}$, 
with $U_{i}$ being iid $\mathcal{N}(0,1).$
Then $\hat{X}_\epsilon(t)$ is the process $X(t)$ with subordinator jumps  truncated at level $\epsilon$, and $X_{\epsilon}(t)$ is the remainder, corresponding to the small jumps.  There intuitively exists a trade-off between the computational complexity of computing $\hat{X}_\epsilon(t)$, how accurately it approximates the true process $X(t)$, and whether a Gaussian approximation to $X_{\epsilon}(t)$ is valid.

The left-hand plots in Figures~\ref{fig:GammaPathsSimVer},~\ref{fig:TSPathsSimVer}, and~\ref{fig:GIGPathsSimVer} show sample paths from the {\em truncated} versions of the Gamma (with parameters $\gamma=\sqrt{2}$ and $\nu=2$), tempered stable
(with parameters $\kappa=1/2$, $\gamma=1.35$ and $\delta=1$), and generalised inverse Gaussian
(with parameters $\delta=1/3$, $\gamma=\sqrt{2}$ and $\lambda=0.2$)  {subordinators}, using the above methodology with $\epsilon=10^{-10}$, as described in~\cite{GIGLevy}. See Section~\ref{section:ExampleCases} for the precise definitions of these processes.
The corresponding right-hand plots compare the empirical distributions of \(N=10^5\) truncated process values at time \(t=1\), with random variates from the theoretical exact (not truncated) marginal distribution of each of these truncated processes at time \(t=1\). 
These are truncated at very low values of $\epsilon$ that might lead to infeasibly large computational burden in practical use. This motivates our justification of Gaussian approximations to the residuals in the following sections, when larger values of $\epsilon$ are used for computational reasons.

\begin{figure}[ht!]
    \begin{centering}
        \scalebox{0.5}{\includegraphics{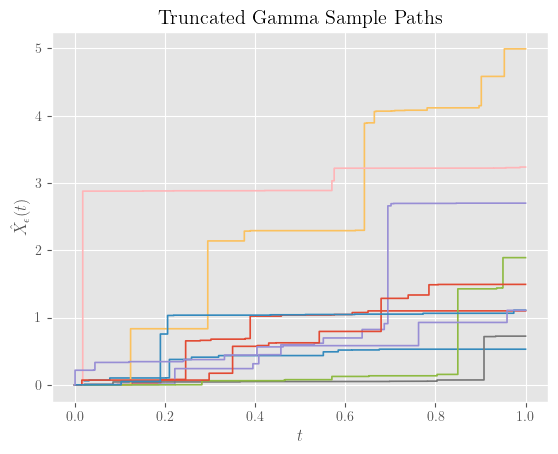}\includegraphics{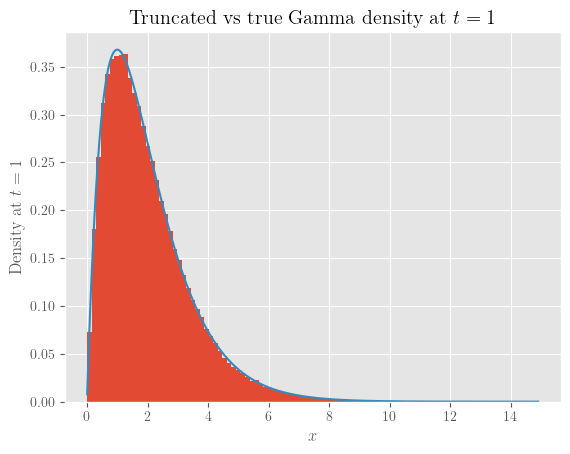}}
\caption{Left: ten sample paths from a truncated gamma process; right: histogram of $N=10^5$ process values at \(t=1\). Both generated with \(\epsilon = 10^{-10}\). The solid line is the true density of the original process at time $t=1$.\label{fig:GammaPathsSimVer}
}
\end{centering}
\end{figure}

\begin{figure}[ht!]
    \begin{centering}
    \scalebox{0.5}{\includegraphics{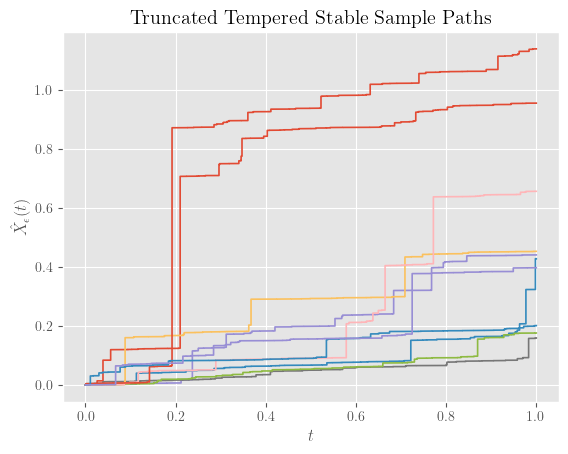}\includegraphics{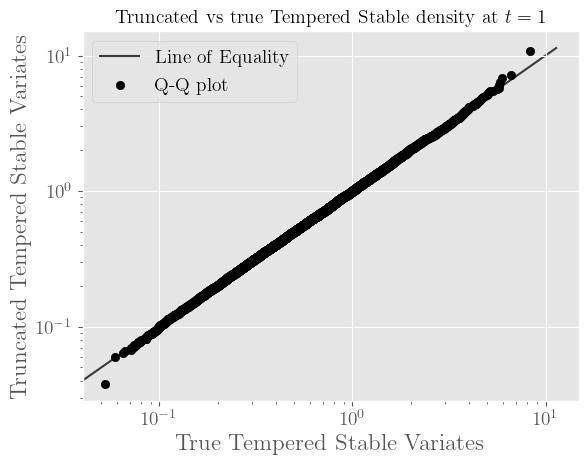}}
    \caption{Left: ten sample paths from a truncated tempered stable process; right: Q-Q plot of $N=10^5$ truncated process values at \(t=1\)
    versus $N=10^5$ samples from the true distribution of the process at $t=1$. Both generated with \(\epsilon = 10^{-10}\).}\label{fig:TSPathsSimVer}
    \end{centering}
\end{figure}

\begin{figure}[ht!]
    \begin{centering}
    \scalebox{0.5}{\includegraphics{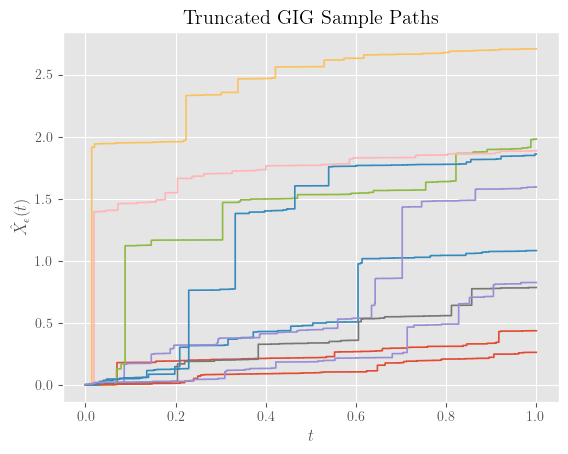}\includegraphics{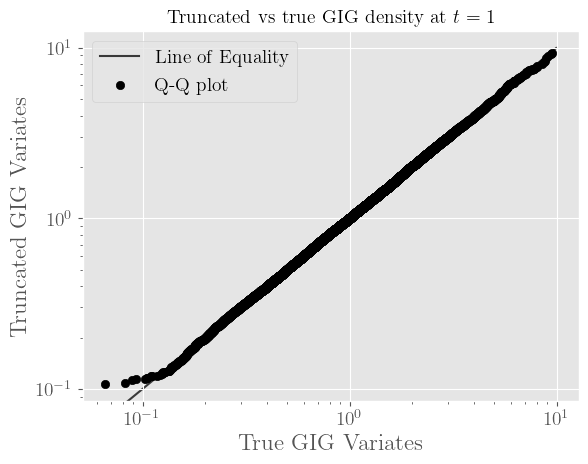}}
    \caption{Left: ten sample paths from a truncated generalised inverse Gaussian process; right: Q-Q plot of $N=4\times 10^4$ truncated process values at \(t=1\) versus $N=4\times10^4$ samples from the true distribution of the process at $t=1$. Both generated with \(\epsilon = 10^{-10}\). \label{fig:GIGPathsSimVer}}
    \end{centering}
\end{figure}

\newpage

\section{Gaussian process convergence}\label{section:Results}
\subsection{Preliminaries}
Consider a subordinator $(Z(t))$ with L\'{e}vy measure $Q_Z$. The corresponding process with jumps truncated at $\epsilon$ is denoted $(Z_\epsilon(t))$ and has L\'{e}vy measure 
$Q_{Z_\epsilon}(B)=Q_Z(B\cap (0,\epsilon])$ for all Borel sets $B$. The corresponding residual NVM process,
denoted $(X_\epsilon(t))$ has L\'{e}vy measure,
\begin{equation}
Q_{X_\epsilon}(dx)=\int_{0}^{\epsilon}{\cal {N}}(dx|\mu_{W} z, \sigma_{W}^2z)Q_Z(dz),\label{eq:Q_X_eps}
\end{equation}
and its moments can be obtained from (\ref{eq:NVMMeanSub}) and (\ref{eq:NVMVarianceSub}):
\[
\mathbbm{E}[X_\epsilon(t)]=t\int_{-\infty}^\infty x Q_{X_\epsilon}(dx)=t\mu_{W} M^{(1)}_{Z_\epsilon},
\]
and, 
\begin{equation}
    t\sigma_\epsilon^2=\mathbbm{Var} \left[X_\epsilon(t)\right]=t\int_{-\infty}^\infty x^2 Q_{X_\epsilon}(dx)=t\Big[\mu_{W}^2M^{(2)}_{Z_\epsilon}+\sigma_{W}^2M^{(1)}_{Z_\epsilon}\Big],
    \label{sigma_eps}
\end{equation}
where,
\begin{equation}
    M^{(n)}_{Z_\epsilon}=\int_0^\epsilon z^nQ_{Z_\epsilon}(dz)<\infty, \quad n\geq 1.
    \label{eq:Z_moments}
\end{equation}
Note that these moments are all well defined and finite since $Q_{Z_\epsilon}$ satisfies~(\ref{eq:SubordinatorRequirement}) 
and for all $n\geq 1$ we have $\lim_{\epsilon\rightarrow0}M_{Z_{\epsilon}}^{(n)} = 0$. 
Note also that, for any $0< \epsilon \leq 1$ and $m<n$,
\begin{equation}M^{(n)}_{Z_\epsilon}\leq  \epsilon^{n-m} M^{(m)}_{Z_\epsilon}, \quad n\geq 1,\label{M_condition}
\end{equation}
since $x^n\leq \epsilon^{n-m} x^m$ for $0<x\leq \epsilon$.
In particular this implies that,
\begin{equation*}\sigma_\epsilon^2=\mu_{W}^2M^{(2)}_{Z_\epsilon}+\sigma_{W}^2M^{(1)}_{Z_\epsilon}\leq
M^{(1)}_{Z_\epsilon}(\mu_{W}^2\epsilon+\sigma_{W}^2)\overset{\epsilon\rightarrow 0}{\rightarrow} 0, 
\end{equation*}
and that,
\begin{equation}
\frac{{\sigma_\epsilon}^4}{M^{(2)}_{Z_\epsilon}}=\frac{(\mu_{W}^2M^{(2)}_{Z_\epsilon}+\sigma_{W}^2M^{(1)}_{Z_\epsilon})^2}{M^{(2)}_{Z_\epsilon}}\xrightarrow{\epsilon \rightarrow 0} \sigma_{W}^4\lim_{\epsilon\rightarrow 0}\frac{{M^{(1)}_{Z_\epsilon}}^2}{M^{(2)}_{Z_\epsilon}},\label{eq:M_1_M_2}
\end{equation}
{whenever the limit $\lim_{\epsilon\rightarrow 0}\frac{{M^{(1)}_{Z_\epsilon}}^2}{M^{(2)}_{Z_\epsilon}}$ exists.}

\subsection{Gaussianity of the Limit of \(X_{\epsilon}\)}\label{section:SufficientCondition}

The following is our first main result: It gives conditions under which the residual process
corresponding to an NVM Lévy process with jumps truncated at level $\epsilon$,
converges to Brownian motion as $\epsilon\to 0$. An analogous one-dimensional result for $\alpha$-stable 
processes is established, along with corresponding convergence bounds, in~\cite{CharacteristicLevyIntegral}.

\begin{theorem}
\label{theorem:NVMConvergenceGaussian}
Consider a truncated NVM L\'{e}vy process $X_{\epsilon}=(X_{\epsilon}(t))$ with Lévy measure as in~{\em (\ref{eq:Q_X_eps})}. Let the standardised process $Y_\epsilon$ be defined as,
\(Y_\epsilon(t)=(X_\epsilon(t)-\mathbbm{E}[X_\epsilon(t)])/\sigma_\epsilon\), $t\geq 0$.
If,
\begin{align}
    \lim_{\epsilon \rightarrow 0}\frac{M_{Z_\epsilon}^{(2)}}{{M_{Z_\epsilon}^{(1)}}^2} = 0, \label{eq:NecessaryAndSufficient}
\end{align}
then $Y_\epsilon$ converges weakly to a standard Brownian motion
$W=(W(t))$ in $D[0,1]$ {with the topology of uniform convergence}.
\end{theorem}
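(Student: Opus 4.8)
The plan is to use the fact that, for every fixed $\epsilon>0$, the standardised residual $Y_\epsilon$ is itself a L\'evy process, so that the functional statement can be reduced to a pointwise statement about its characteristic exponent. By the functional limit theorem for L\'evy processes (convergence of characteristic exponents implies convergence in the Skorokhod space $(D[0,1],J_1)$, as in Jacod--Shiryaev or Kallenberg), it suffices to show that the characteristic exponent of $Y_\epsilon(1)$ converges to $-\theta^2/2$ for every $\theta\in\mathbbm{R}$, which is the characteristic exponent of standard Brownian motion. Since $Y_\epsilon$ is a L\'evy process, its characteristic exponent at any $t$ equals $t\,K_{Y_\epsilon}(1;\theta)$, so convergence at $t=1$ automatically propagates to all finite-dimensional distributions. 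The final upgrade from the $J_1$ topology to uniform convergence is then a standard consequence of the limiting Brownian paths lying in $C[0,1]$, on which the two topologies coincide.

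The heart of the argument is the computation of $K_{Y_\epsilon}(1;\theta)$. Restricting \eqref{eq:cumulantNVMGeneral} to $(0,\epsilon]$ and incorporating the centering and scaling in the definition of $Y_\epsilon$, I would write $\phi=\theta/\sigma_\epsilon$ and $c=i\mu_{W}\phi-\tfrac12\sigma_{W}^2\phi^2$, and use the identity $-i\mu_{W}\phi z=-cz-\tfrac12\sigma_{W}^2\phi^2 z$ to obtain the decomposition
\[
K_{Y_\epsilon}(1;\theta)=\int_0^\epsilon\big(e^{cz}-1-cz\big)\,Q_Z(dz)-\tfrac12\sigma_{W}^2\phi^2 M^{(1)}_{Z_\epsilon}.
\]
For the second term, substituting $\phi^2=\theta^2/\sigma_\epsilon^2$ and using \eqref{sigma_eps} gives $\tfrac12\sigma_{W}^2\phi^2 M^{(1)}_{Z_\epsilon}=\tfrac{\theta^2}{2}\,\sigma_{W}^2 M^{(1)}_{Z_\epsilon}/\sigma_\epsilon^2$; since \eqref{M_condition} yields $M^{(2)}_{Z_\epsilon}/M^{(1)}_{Z_\epsilon}\le\epsilon\to0$, the ratio $\sigma_{W}^2 M^{(1)}_{Z_\epsilon}/\sigma_\epsilon^2\to1$, so this term tends to $-\theta^2/2$. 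For the first (remainder) term I would invoke the elementary bound $|e^{w}-1-w|\le\tfrac12|w|^2$, which is valid here because $\mathrm{Re}(cz)=-\tfrac12\sigma_{W}^2\phi^2 z\le0$, giving $\left|\int_0^\epsilon(e^{cz}-1-cz)\,Q_Z(dz)\right|\le\tfrac12|c|^2 M^{(2)}_{Z_\epsilon}$ with $|c|^2=\mu_{W}^2\phi^2+\tfrac14\sigma_{W}^4\phi^4$. The quadratic part is controlled via $M^{(2)}_{Z_\epsilon}/\sigma_\epsilon^2\le M^{(2)}_{Z_\epsilon}/(\sigma_{W}^2 M^{(1)}_{Z_\epsilon})\to0$, while the quartic part is controlled via $M^{(2)}_{Z_\epsilon}/\sigma_\epsilon^4\le M^{(2)}_{Z_\epsilon}/(\sigma_{W}^4(M^{(1)}_{Z_\epsilon})^2)$, which tends to $0$ precisely by the hypothesis \eqref{eq:NecessaryAndSufficient}. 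Hence both contributions vanish and $K_{Y_\epsilon}(1;\theta)\to-\theta^2/2$.

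The main obstacle, and the place where the hypothesis is indispensable, is that $\phi=\theta/\sigma_\epsilon\to\infty$ as $\epsilon\to0$ (because $\sigma_\epsilon\to0$), so the integrand cannot be Taylor-expanded at a fixed argument; the dangerous contribution is exactly the large-$\phi$ quartic term $\phi^4 M^{(2)}_{Z_\epsilon}=\theta^4 M^{(2)}_{Z_\epsilon}/\sigma_\epsilon^4$, whose taming requires both the lower bound $\sigma_\epsilon^4\ge\sigma_{W}^4(M^{(1)}_{Z_\epsilon})^2$ and the assumption $M^{(2)}_{Z_\epsilon}/(M^{(1)}_{Z_\epsilon})^2\to0$. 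I would emphasise that the lower bounds $\sigma_\epsilon^2\ge\sigma_{W}^2 M^{(1)}_{Z_\epsilon}$ and $\sigma_\epsilon^4\ge\sigma_{W}^4(M^{(1)}_{Z_\epsilon})^2$ hold for all parameter values because $\sigma_{W}>0$, so that no separate treatment of the symmetric case $\mu_{W}=0$ is needed; indeed the $\mu_{W}$-dependent terms are the easier ones, being controlled merely by $M^{(2)}_{Z_\epsilon}/M^{(1)}_{Z_\epsilon}\to0$. What remains genuinely delicate is ensuring that the quartic error is $o(1)$ uniformly in the way the truncation level and the variance $\sigma_\epsilon$ jointly vanish, which is precisely the content of the stated condition.
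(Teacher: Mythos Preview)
Your argument for the pointwise convergence of the characteristic exponent is correct and essentially identical to the paper's: the same decomposition of $K_{Y_\epsilon}(1;\theta)$ into a ``main'' term $-\tfrac{\theta^2}{2}\sigma_W^2 M^{(1)}_{Z_\epsilon}/\sigma_\epsilon^2$ and a remainder $\int_0^\epsilon(e^{cz}-1-cz)\,Q_Z(dz)$, the same Taylor bound $|e^w-1-w|\le\tfrac12|w|^2$ for $\mathrm{Re}(w)\le 0$, and the same control of the resulting quadratic and quartic contributions via $M^{(2)}_{Z_\epsilon}/M^{(1)}_{Z_\epsilon}\le\epsilon$ and the hypothesis~\eqref{eq:NecessaryAndSufficient}.

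The one genuine methodological difference is in the upgrade to process-level convergence. You appeal to the general functional limit theorem for L\'evy processes (convergence of characteristic exponents implies $J_1$-convergence in $D[0,1]$, then uniform convergence since the limit is continuous). The paper instead proceeds more directly: it invokes Pollard's criterion (Theorem~V.19 in \emph{Convergence of Stochastic Processes}), verifying the required stochastic equicontinuity condition by a one-line Chebyshev bound on $\mathbbm{P}\{|Y_\epsilon(t)-Y_\epsilon(s)|>\delta\}\le(t-s)/\delta^2$, which is uniform in $\epsilon$ because $Y_\epsilon$ has unit variance at $t=1$. Your route is cleaner if one is willing to cite the heavy machinery; the paper's route is more self-contained and makes the tightness explicit. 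Both are valid and yield the same conclusion.
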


\begin{proof}
The main content of the proof is
establishing the claim that $X_\epsilon(1)$, properly standardised, converges to a Gaussian
under~(\ref{eq:NecessaryAndSufficient}). 
For that,
it suffices to show that the CE of 
the standardised version of $X_\epsilon(1)$ converges pointwise to $-u^2/2$.
The CE of $X_\epsilon(1)$ is given as in~(\ref{eq:cumulantNVMGeneral}), by,
\begin{align*}
    \phi^\epsilon_{X}(u)=\int_{\mathbbm{R}^*} (\exp(iux)-1)Q_X^\epsilon(dx)=\int_0^\epsilon\left[\exp\Big(iu\mu_{W}z-\frac{1}{2}{u^2}\sigma_{W}^2z\Big)-1\right]Q_Z(dz),
\end{align*}
where $Q_X^\epsilon(dx)$ is the L\'{e}vy measure for $X_\epsilon$, having the same structure as (\ref{eq:Q_X_eps}).
The CE for $\tilde{X}_\epsilon(t)={X}_\epsilon(t)-\mathbbm{E}[{X}_\epsilon(t)]$, is then given by,
\begin{align*}
    \phi_{\tilde{X}}^{\epsilon}(u) = \phi_{X}^{\epsilon}(u) - iu\int_{\mathbbm{R}^*}xQ_{X}^{\epsilon}(dx) =  \int_0^\epsilon\left[\exp\left(iu\mu_{W}z-\frac{1}{2}u^2\sigma_{W}^2z\right)-1-iu\mu_{W}z\right]Q_{Z}(dz).
\end{align*}
Note that $X_\epsilon$ has finite mean, $\int_{\mathbbm{R}^*}x Q_X^\epsilon(dx)=\mu_{W}M_{Z\epsilon}^{(1)}<\infty$, by the finiteness of $M_{Z\epsilon}^{(1)}$ in~(\ref{eq:Z_moments}).
Now scaling the centred process, $\tilde{X}_{\epsilon}$, to have unit variance at $t=1$, i.e., letting,
$Y_\epsilon(t)=\tilde{X}_\epsilon(t)/\sigma_\epsilon,$
the CE for $Y_\epsilon(1)$ becomes, 
\begin{align*}
    \phi^\epsilon_Y(u) = \phi^\epsilon_{\tilde{X}}(u/\sigma_\epsilon)&= \int_{0}^{\epsilon}\left[\exp\left(v\frac{z}{\sigma_\epsilon^2}\right)-1-iu\mu_{W}\frac{z}{\sigma_\epsilon}\right]Q_Z(dz)\\
    &=\int_0^\epsilon\left[\exp\left(v\frac{z}{\sigma_\epsilon^2}\right)-1-v\frac{z}{\sigma_\epsilon^{2}}-\frac{1}{2}u^2\sigma_{W}^2\frac{z}{\sigma_\epsilon^2}\right]Q_Z(dz)\\
    &= \int_{0}^{\epsilon}\left[\exp\left(v\frac{z}{\sigma_\epsilon^2}\right)-1-v\frac{z}{\sigma_\epsilon^{2}}\right]Q_Z(dz)-\frac{1}{2}u^2\sigma_{W}^2\frac{M^{(1)}_{Z_\epsilon}}{\sigma_\epsilon^2}\\
    &=\varphi_\epsilon(u)-\frac{1}{2}u^2\psi_\epsilon,
\end{align*}
where $v={iu\mu_{W}}{\sigma_\epsilon}-{u^2\sigma_{W}^2}/2$, $\varphi_\epsilon(u)=\int_{0}^{\epsilon}[\exp(v\frac{z}{\sigma_\epsilon^2})-1-v\frac{z}{\sigma_\epsilon^{2}}]Q_Z(dz)$ and $\psi_\epsilon=\sigma_{W}^2\frac{M^{(1)}_{Z_\epsilon}}{\sigma_\epsilon^2}$.

Consider the difference between $\exp(\phi^\epsilon_Y(u))$ and the CF of the standard normal:
\begin{align*}
    e_\epsilon(u)&:=\exp\left[\varphi_\epsilon(u)-\frac{1}{2}u^2\psi_\epsilon\right]-\exp\left(-\frac{u^2}{2}\right)\\
    &=\exp\left(-\frac{u^2}{2}\right)\left\{\exp\left[\varphi_\epsilon(u)+\frac{1}{2}u^2(1-\psi_\epsilon)\right]-1\right\}.
\end{align*}
First we establish bounds on $\varphi_\epsilon$ and $1-\psi_\epsilon$. For $1-\psi_\epsilon$, from~(\ref{sigma_eps}) and~(\ref{M_condition}) we have,
$$1-\psi_\epsilon=\frac{\mu_{W}^2M^{(2)}_{Z_\epsilon}}{{\sigma_\epsilon}^2}=\frac{\mu_{W}^2M^{(2)}_{Z_\epsilon}}{\mu_{W}^2M^{(2)}_{Z_\epsilon}+\sigma_{W}^2M^{(1)}_{Z_\epsilon}}=\frac{\mu_{W}^2}{\mu_{W}^2+\sigma_{W}^2M^{(1)}_{Z_\epsilon}/M^{(2)}_{Z_\epsilon}},
$$
therefore, since clearly $1-\psi_\epsilon\geq 0$,
\begin{equation}
0\leq 1-\psi_\epsilon
\leq \frac{\mu_{W}^2}{\mu_{W}^2+\sigma_{W}^2/\epsilon}\xrightarrow[]{\epsilon\rightarrow 0}0.
\label{eq:phi_b_bound}
\end{equation}
For $\varphi_\epsilon$, first recall that for any complex $z$ with negative real part,
\begin{align}
    \Big|\exp(z)-\sum_{i=0}^{n}z^n/n!\Big|\leq |z|^{n+1}/(n+1)!, \label{eq:BoundExponential}
\end{align}
and so,
\begin{align}
    |\varphi_\epsilon(u)|&=\left|\int_0^\epsilon(\exp(vz/\sigma_\epsilon^2)-1-vz/\sigma_\epsilon^2)Q(dz)\right|\label{eq:shorts}\\
    &\leq 
    \int_0^\epsilon \left|(\exp(vz/\sigma_\epsilon^2)-1-vz/\sigma_\epsilon^2)\right|Q(dz)\nonumber\\
    &\leq \int_0^\epsilon \frac{1}{2}|v|^2\frac{z^2}{\sigma_\epsilon^4}Q(dz)\nonumber\\
    &=\frac{1}{2}|v|^2 \frac{M^{(2)}_{Z_\epsilon}}{\sigma_\epsilon^4}.\label{eq:O(V_sq)} 
\end{align}
The final integral is well defined by~(\ref{eq:Z_moments}).
Since \(|v|^2 = {u^2\mu_{W}^2}{\sigma_\epsilon^2}+{u^4\sigma_{W}^4}/4\), using (\ref{eq:M_1_M_2}) and (\ref{eq:phi_b_bound}),
\begin{align*}
    \lim_{\epsilon\rightarrow 0}|v|^2\frac{M^{(2)}_{Z_\epsilon}}{\sigma_\epsilon^4}&= \lim_{\epsilon\rightarrow 0}\left[u^{2}\frac{\mu_{W}^{2}M_{Z_{\epsilon}}^{(2)}}{\sigma_{\epsilon}^{2}}+\frac{1}{4}u^{4}\sigma_{W}^{4}\frac{M_{Z_{\epsilon}}^{(2)}}{\sigma_{\epsilon}^{4}}\right]\\
    &=\frac{1}{4}u^{4}\sigma_{W}^4\lim_{\epsilon\rightarrow 0}\frac{M^{(2)}_{Z_\epsilon}}{(\mu_{W}^2M^{(2)}_{Z_\epsilon}+\sigma_{W}^2M^{(1)}_{Z_\epsilon})^2}\\
    &=\frac{1}{4}u^4\lim_{\epsilon\rightarrow 0}
        \frac{M^{(2)}_{Z_\epsilon}}{(M^{(1)}_{Z_\epsilon})^2},
\end{align*}
where we used the fact that \(M^{(2)}_{Z_\epsilon}/M^{(1)}_{Z_\epsilon} \leq \epsilon \rightarrow 0\) from (\ref{M_condition}).
Combining the bounds in~(\ref{eq:phi_b_bound}), (\ref{eq:O(V_sq)}):
\begin{align*}
    \Big|\varphi_\epsilon(u)+\frac{1}{2}u^2(1-\psi_\epsilon)\Big|&\leq \frac{1}{2}|v|^2 \frac{M^{(2)}_{Z_\epsilon}}{\sigma_\epsilon^4}+\frac{1}{2}u^2\frac{\epsilon\mu_{W}^2}{\epsilon\mu_{W}^2+\sigma_{W}^2}\xrightarrow{\epsilon\rightarrow 0} \frac{u^4}{8}\lim_{\epsilon\rightarrow 0}    \frac{M^{(2)}_{Z_\epsilon}}{{M^{(1)}_{Z_\epsilon}}^2}.
\end{align*}
This bound is finite for any $|u|<\infty$ by the properties of 
$\sigma_\epsilon$ and $M^{(2)}_{Z_\epsilon}$, and hence, under~(\ref{eq:NecessaryAndSufficient}): 
\begin{align*}
    |e_\epsilon(u)|
    &=\left|\exp\left(-\frac{u^2}{2}\right)\left\{\exp\left[\varphi_\epsilon(u)+\frac{1}{2}u^2(1-\psi_\epsilon)\right]-1\right\}\right|\\
    &\leq \exp\left(-\frac{u^2}{2}\right)\left(\exp\left|\varphi_\epsilon(u)+\frac{1}{2}u^2(1-\psi_\epsilon)\right|-1\right) \xrightarrow[]{\epsilon\rightarrow 0} 0.
\end{align*}
This proves the claimed pointwise convergence
$\exp(\phi^\epsilon_Y(u))\rightarrow \exp(-u^2/2)$ as $\epsilon\to 0$.

    Next we argue that condition~(\ref{eq:NecessaryAndSufficient}) is in fact sufficient for the process-level
    convergence claimed in the theorem. Note that the result of the claim
    also implies uniform convergence of the relevant characteristic functions
    on compact intervals~\cite[Theorem~5.3, p.~86]{ModernProbabilityFoundations}. Now, it is straightforward, from the definition of a Lévy process, that the increments \(Y_\epsilon(t)-Y_\epsilon(s)\) converge in distribution to the corresponding (Gaussian) increments of a Brownian motion, for all $\ s < t$. We proceed to verify requirement~(III) 
     in~\cite[Theorem~V.19]{Pollard_1984}, i.e.
    {
     given $\delta > 0$, there are $\alpha  > 0$ 
and $\beta>0$
and $\epsilon_0 > 0$ such
that ${\mathbb P}\{|Y_\epsilon(t) - Y_\epsilon(s)| \leq \delta\} \geq \beta$
whenever $|t - s| < \alpha$ and $\epsilon < \epsilon_0$.
To see this, note that the L\'{e}vy process
$(Y_\epsilon(t) : t \in [0, 1])$ is centered, so for any 
$1 \geq t \geq s \geq 0$, by Chebychev,
\begin{align*}
{\mathbb P}\{|Y_\epsilon(t) - Y_\epsilon(s)| > \delta\} 
&={\mathbb P}\{|Y_\epsilon(t - s)| > \delta\}\\
&\leq \frac{1}{\delta^2} {\rm Var}(Y_\epsilon(t - s))\\
&=\frac{(t - s){\rm Var}(Y_\epsilon(1))}{\delta^2}= \frac{(t - s)}{\delta^2}.
\end{align*}
By taking $\delta\in (0, 1)$, we can choose any
$\epsilon_0$ and $\alpha = \delta^3$. Then, ${\mathbb P}\{|Y_\epsilon(t) - Y_\epsilon(s)| > \delta\}=1-{\mathbb P}\{|Y_\epsilon(t) - Y_\epsilon(s)| \leq \delta\}$, so we can set  $\beta=1-\delta$,  
and we conclude \((Y_{\epsilon}(t)) \xrightarrow{\epsilon \rightarrow 0}{} (W(t))\) in $D[0,1]$, as claimed.}
    \end{proof}
Next we show that some conditions are in fact necessary for the Gaussian limit in Theorem~\ref{theorem:NVMConvergenceGaussian} to hold.

\begin{theorem}
\label{theorem:NVMnecessity}
Consider a truncated NVM L\'{e}vy process $X_{\epsilon}=(X_{\epsilon}(t))$ and define
the associated standardised process $Y_\epsilon$ as in Theorem~\ref{theorem:NVMConvergenceGaussian}.
If condition~{\em (\ref{eq:NecessaryAndSufficient})} does not hold and, moreover,
\begin{align}
L_1:=\liminf_{\epsilon \rightarrow 0}\frac{M_{Z_\epsilon}^{(2)}}{{M_{Z_\epsilon}^{(1)}}^2} >0
\quad\mbox{and}\quad
L_2:=\sigma_W^6\limsup_{\epsilon\rightarrow 0}\frac{M^{(3)}_{Z_\epsilon}}{\sigma_\epsilon^6}>0.
\label{eq:necessity}
\end{align}
then $Y_\epsilon(1)$ does not converge to ${\cal N}(0,1)$ in distribution as $\epsilon\to 0$. 
\end{theorem}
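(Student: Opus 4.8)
The plan is to prove the contrapositive at the level of the characteristic exponent of the time-$1$ marginal $Y_\epsilon(1)$, working with the \emph{modulus} of its characteristic function so that only the real part of the exponent is needed. If $Y_\epsilon(1)$ converged in distribution to $\mathcal N(0,1)$, then $|\mathbbm E[e^{iuY_\epsilon(1)}]|=\exp(\mathrm{Re}\,\phi^\epsilon_Y(u))$ would tend to $e^{-u^2/2}$ for every $u$, so it suffices to exhibit a single $u$ for which $R_\epsilon(u):=\mathrm{Re}\,\phi^\epsilon_Y(u)+\tfrac12u^2$ stays bounded away from $0$ as $\epsilon\to0$. Reusing the computation in the proof of Theorem~\ref{theorem:NVMConvergenceGaussian}, I would first record the exact expression $R_\epsilon(u)=\int_0^\epsilon[e^{-az}\cos(bz)-1+az]\,Q_Z(dz)+\tfrac{u^2\mu_W^2}{2\sigma_\epsilon^2}M^{(2)}_{Z_\epsilon}$, where $a=u^2\sigma_W^2/(2\sigma_\epsilon^2)$ and $b=u\mu_W/\sigma_\epsilon$, the last summand being exactly $\tfrac12u^2(1-\psi_\epsilon)$.

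The key step is a sign-definite lower bound that sidesteps any control of higher cumulants. Using $\cos(bz)\ge1-\tfrac12b^2z^2$ together with $e^{-az}\le1$ bounds the oscillatory integrand below by $e^{-az}-1+az-\tfrac12b^2z^2$; since $\tfrac12b^2M^{(2)}_{Z_\epsilon}=\tfrac{u^2\mu_W^2}{2\sigma_\epsilon^2}M^{(2)}_{Z_\epsilon}$ is precisely the correction term already present in $R_\epsilon(u)$, the two cancel and leave $R_\epsilon(u)\ge\int_0^\epsilon(e^{-az}-1+az)\,Q_Z(dz)$. Then the elementary inequality $e^{-s}-1+s\ge\tfrac12 s^2-\tfrac16 s^3$, valid for all $s\ge0$ because the third derivative of the difference is $1-e^{-s}\ge0$, yields the clean bound $R_\epsilon(u)\ge\tfrac12a^2M^{(2)}_{Z_\epsilon}-\tfrac16a^3M^{(3)}_{Z_\epsilon}$.

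It then remains to translate this into $L_1$ and $L_2$. Since $M^{(2)}_{Z_\epsilon}/M^{(1)}_{Z_\epsilon}\le\epsilon\to0$ by~(\ref{M_condition}), one has $\sigma_\epsilon^2=\sigma_W^2M^{(1)}_{Z_\epsilon}(1+o(1))$, whence $\tfrac12a^2M^{(2)}_{Z_\epsilon}=\tfrac{u^4}{8}\,\tfrac{M^{(2)}_{Z_\epsilon}}{(M^{(1)}_{Z_\epsilon})^2}(1+o(1))$ and $\tfrac16a^3M^{(3)}_{Z_\epsilon}=\tfrac{u^6}{48}\,\sigma_W^6\tfrac{M^{(3)}_{Z_\epsilon}}{\sigma_\epsilon^6}$. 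Taking $\liminf$ and using $\liminf(A_\epsilon-B_\epsilon)\ge\liminf A_\epsilon-\limsup B_\epsilon$ gives $\liminf_{\epsilon\to0}R_\epsilon(u)\ge\tfrac{u^4}{8}L_1-\tfrac{u^6}{48}L_2$. As $L_1,L_2$ are positive and finite (finiteness of $L_2$ again following from $M^{(3)}_{Z_\epsilon}\le\epsilon M^{(2)}_{Z_\epsilon}$ and $L_1>0$), choosing any $u$ with $0<u^2<6L_1/L_2$ makes the right-hand side strictly positive, contradicting $R_\epsilon(u)\to0$; hence $Y_\epsilon(1)$ cannot converge to $\mathcal N(0,1)$.

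I expect the main obstacle to be the sign competition that defeats a direct cumulant or Taylor-series analysis: in the non-Gaussian regime $M^{(1)}_{Z_\epsilon}=O(\epsilon)$, so the successive terms $a^kM^{(k)}_{Z_\epsilon}/k!$ in the expansion of $R_\epsilon(u)$ do not vanish and alternate in sign, the positive $k=2$ term (governed by $L_1$) competing against the negative $k=3$ term (governed by $L_2$) and all higher ones, so no finite truncation controls $R_\epsilon(u)$. The device that resolves this is exactly the pair of elementary inequalities above: the cosine bound absorbs the skewness correction, and the cubic lower bound on $e^{-s}-1+s$ packages the entire tail $\sum_{k\ge4}$ into a nonnegative remainder, reducing the whole question to the two leading terms. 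The one remaining care point is to verify the stated finiteness so that the admissible range $u^2<6L_1/L_2$ is nonempty; the degenerate possibility $L_2=\infty$ only strengthens the conclusion and can be handled by retaining the exact nonnegative integral $\int_0^\epsilon(e^{-az}-1+az)\,Q_Z(dz)$ in place of its cubic lower bound.
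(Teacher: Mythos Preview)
Your argument is correct and reaches the same endpoint as the paper---the competition between a $\tfrac{u^4}{8}L_1$ term and a $\tfrac{u^6}{48}L_2$ term, resolved by taking $u$ small---but the route is genuinely different. The paper works with the full complex exponent $\varphi_\epsilon(u)$, writes $\varphi_\epsilon=F_\epsilon+D_\epsilon$ via a second-order Taylor expansion with remainder $|D_\epsilon(v)|\le |v|^3M^{(3)}_{Z_\epsilon}/(6\sigma_\epsilon^6)$, and then runs a short case analysis (can $F_\epsilon$ vanish? can $D_\epsilon$?) before comparing $\liminf|D_\epsilon|$ with $\limsup|D_\epsilon|$. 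You instead pass to the modulus of the characteristic function, which isolates the real part $R_\epsilon(u)$, and then use two sign-definite elementary inequalities---$\cos(bz)\ge 1-\tfrac12 b^2z^2$ to absorb the skewness correction exactly, and $e^{-s}-1+s\ge \tfrac12 s^2-\tfrac16 s^3$ to control the entire tail---so that no case analysis is needed and the lower bound $R_\epsilon(u)\ge\tfrac12 a^2M^{(2)}_{Z_\epsilon}-\tfrac16 a^3M^{(3)}_{Z_\epsilon}$ drops out directly. This is cleaner and more robust than the paper's argument; the cancellation you exploit between $\tfrac12 b^2M^{(2)}_{Z_\epsilon}$ and $\tfrac12 u^2(1-\psi_\epsilon)$ is a nice observation.

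One small correction: your claim that finiteness of $L_2$ ``follows from $M^{(3)}_{Z_\epsilon}\le\epsilon M^{(2)}_{Z_\epsilon}$ and $L_1>0$'' is not justified---those ingredients give only a \emph{lower} bound on $\epsilon/M^{(1)}_{Z_\epsilon}$, not an upper bound, so $L_2=\infty$ is not ruled out. Your suggested fallback (retain the nonnegative integral $\int_0^\epsilon(e^{-az}-1+az)\,Q_Z(dz)$) gives only $R_\epsilon(u)\ge 0$, which does not by itself preclude $R_\epsilon(u)\to 0$. That said, the paper's own proof has exactly the same implicit reliance on $L_2<\infty$ in its final inequality, so this is a shared limitation of the theorem statement rather than a defect particular to your approach.
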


\begin{proof}
Suppose the conditions in~(\ref{eq:necessity}) hold. We will assume that
$Y_\epsilon(1)$ converges to ${\cal N}(0,1)$ in distribution as $\epsilon\to 0$,
and derive a contradiction.

In the notation of the proof of Theorem~\ref{theorem:NVMConvergenceGaussian},
expanding the exponential 
series in~(\ref{eq:shorts}) for one more term than before and using~(\ref{eq:BoundExponential}), yields,
\begin{align*}
    \int_0^\epsilon(\exp(vz/\sigma_\epsilon^2)-1-vz/\sigma_\epsilon^2-v^2z^2/(2\sigma_\epsilon^4))Q(dz)=D_\epsilon(v),
    \end{align*}
with    $|D_\epsilon(v)|\leq |v|^3M^{(3)}_{Z_\epsilon}/(3!\sigma_\epsilon^6)$.
Hence, rearranging and integrating:
\begin{equation}
\varphi_\epsilon(u)=\int_0^\epsilon(\exp(vz/\sigma_\epsilon^2)-1-vz/\sigma_\epsilon^2)Q(dz)=F_\epsilon(v)+D_\epsilon(v),
    \label{eq:longs}
\end{equation}
where $v={iu\mu_{W}}{\sigma_\epsilon}-{u^2\sigma_{W}^2}/2$ as before and
$F_\epsilon(v):=v^2 M^{(2)}_{Z_\epsilon}/(2\sigma_\epsilon^4).$
The assumption that $Y_\epsilon(1)$ converges to ${\cal N}(0,1)$ implies
that $|\varphi_\epsilon(u)|\to 0$ for all $u$.

We consider several cases. 
First, we note that $F_\epsilon(v)$ cannot converge to zero
for any $u\neq 0$, since, 
by~(\ref{eq:necessity}):
\begin{align}
    \liminf_{\epsilon\rightarrow 0} {|}F_\epsilon(v){|}
    =\frac{1}{4}u^4\sigma_{W}^4\liminf_{\epsilon\rightarrow 0}\frac{M^{(2)}_{Z_\epsilon}}{\sigma_\epsilon^4}
    =\frac{1}{4}u^4\sigma_{W}^4\liminf_{\epsilon \rightarrow 0}\frac{M^{(2)}_{Z_\epsilon}}{{M^{(1)}_{Z_\epsilon}}^2}
    =\frac{1}{4}u^4\sigma_{W}^4 L_1>0.
    \label{eq:firstterm}
\end{align}
Second, we note that $D_\epsilon(v)$ also cannot converge to zero, because
then $F_\epsilon(v)$ would need to converge to zero as well.
The only remaining case is if, for every $u$, neither $F_\epsilon(v)$ nor $D_\epsilon(v)$ vanish as $\epsilon\to0$,
while their sum does vanish. By~(\ref{eq:necessity}) we have,
\[
\limsup_{\epsilon\to 0} |D_\epsilon(v)|\leq  \limsup_{\epsilon\to 0}|v|^3\frac{M^{(3)}_{Z_\epsilon}}{3!\sigma_\epsilon^6}
={\frac{1}{48}u^6L_2}.
\]
On the other hand, by~(\ref{eq:longs}) and~(\ref{eq:firstterm}) we have,
\[
\liminf_{\epsilon\to 0} |D_\epsilon(v)|
=
\liminf_{\epsilon\to 0} \Big| |\varphi_\epsilon(v)| - F_\epsilon(v)\Big|
=
\liminf_{\epsilon\to 0}  |F_\epsilon(v)|=
\frac{1}{4}u^4\sigma_{W}^4 L_1.
\]
Therefore, 
$${\frac{1}{48}u^6L_2}\geq\limsup_{\epsilon\to 0} |D_\epsilon(v)|\geq \liminf_{\epsilon\to 0} |D_\epsilon(v)|=\frac{1}{4}\sigma_{W}^4 L_1u^4.$$
Since $L_1,L_2>0$, this is clearly violated for $u$ small enough, 
providing the desired contradiction and completing the proof.
\end{proof}

\begin{corollary}
\label{corollary:SufficientCondition}
Suppose the subordinator Lévy measure \(Q_{Z}(dz)\) admits a density \(Q_{Z}(z)\), which, for some $E>0$, satisfies $Q_{Z}(z) > 0$  for all $z \in (0, E]$. Then condition~{\em (\ref{eq:NecessaryAndSufficient})} in Theorem~\ref{theorem:NVMConvergenceGaussian} is equivalent to:
$$
    \lim_{\epsilon\rightarrow0}\epsilon Q_{Z}(\epsilon) = +\infty.
$$
\end{corollary}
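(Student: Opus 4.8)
The plan is to phrase everything in terms of $F(\epsilon):=M^{(1)}_{Z_\epsilon}=\int_0^\epsilon zQ_Z(z)\,dz$. Since $Q_Z$ is a density, $F$ is differentiable with $F'(\epsilon)=\epsilon Q_Z(\epsilon)$, so the target condition $\epsilon Q_Z(\epsilon)\to\infty$ is precisely $F'(\epsilon)\to\infty$, while the ratio in~(\ref{eq:NecessaryAndSufficient}) is $M^{(2)}_{Z_\epsilon}/F(\epsilon)^2$. I would first record two elementary facts: the bound $M^{(2)}_{Z_\epsilon}\le\epsilon F(\epsilon)$ from~(\ref{M_condition}), which gives $M^{(2)}_{Z_\epsilon}/F(\epsilon)^2\le\epsilon/F(\epsilon)$; and the integration-by-parts identity $M^{(2)}_{Z_\epsilon}=\int_0^\epsilon z\,dF(z)=\epsilon F(\epsilon)-\int_0^\epsilon F(z)\,dz$, so that
\[
\frac{M^{(2)}_{Z_\epsilon}}{F(\epsilon)^2}=\frac{\epsilon}{F(\epsilon)}-\frac{\int_0^\epsilon F(z)\,dz}{F(\epsilon)^2}.
\]
Together these reduce the whole question to the growth rate of $F(\epsilon)/\epsilon$.

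For the implication $\epsilon Q_Z(\epsilon)\to\infty\Rightarrow$~(\ref{eq:NecessaryAndSufficient}) --- the clean direction --- I would argue directly: given any $M>0$, positivity of $Q_Z$ on $(0,E]$ together with $zQ_Z(z)\ge M$ for all small enough $z$ yields $F(\epsilon)=\int_0^\epsilon zQ_Z(z)\,dz\ge M\epsilon$, hence $\epsilon/F(\epsilon)\to0$ and therefore $M^{(2)}_{Z_\epsilon}/F(\epsilon)^2\le\epsilon/F(\epsilon)\to0$. This step uses only positivity and the finiteness of the moments in~(\ref{eq:Z_moments}), and needs no further regularity.

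For the converse I would apply L'H\^{o}pital's rule to $F(\epsilon)/\epsilon$: since $F(\epsilon),\epsilon\to0$ and $F'(\epsilon)=\epsilon Q_Z(\epsilon)$, whenever the limit $L:=\lim_{\epsilon\to0}\epsilon Q_Z(\epsilon)$ exists in $[0,\infty]$ we obtain $F(\epsilon)/\epsilon\to L$. If $L\in(0,\infty)$ then $F(\epsilon)\sim L\epsilon$ and $\int_0^\epsilon F\sim\tfrac12L\epsilon^2$, so the displayed identity gives $M^{(2)}_{Z_\epsilon}/F(\epsilon)^2\to\tfrac1L-\tfrac1{2L}=\tfrac1{2L}>0$, and the light-tail case $L=0$ (where the ratio in fact diverges) is handled similarly; in either case~(\ref{eq:NecessaryAndSufficient}) fails. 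Contrapositively,~(\ref{eq:NecessaryAndSufficient}) forces $L=\infty$, i.e.\ $\epsilon Q_Z(\epsilon)\to\infty$.

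I expect the main obstacle to be exactly the legitimacy of this last step. L'H\^{o}pital delivers $\epsilon Q_Z(\epsilon)\to\infty\Rightarrow F(\epsilon)/\epsilon\to\infty$ for free, but the reverse transfer from the \emph{averaged} quantity $F(\epsilon)/\epsilon$ back to the \emph{pointwise} $\epsilon Q_Z(\epsilon)$ is not automatic: an oscillating density can make $\epsilon Q_Z(\epsilon)$ fail to converge while~(\ref{eq:NecessaryAndSufficient}) still holds, so bare positivity of $Q_Z$ is not quite enough to close the equivalence. The natural remedy is to assume that $L=\lim_{\epsilon\to0}\epsilon Q_Z(\epsilon)$ exists in $[0,\infty]$ --- equivalently, that $Q_Z$ is eventually monotone or regularly varying at $0$ --- which is satisfied by all the densities of interest in~(\ref{eq:GMeasure})--(\ref{eq:GIGMeasure}); under such regularity the argument above goes through without further difficulty.
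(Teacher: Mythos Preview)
Your proposal is correct and, in fact, more scrupulous than the paper's own argument. The paper's proof is a two-line application of L'H\^opital, twice, directly to the ratio $M^{(2)}_{Z_\epsilon}/(M^{(1)}_{Z_\epsilon})^2$: differentiating numerator and denominator gives $\epsilon^2 Q_Z(\epsilon)\big/\bigl(2M^{(1)}_{Z_\epsilon}\cdot\epsilon Q_Z(\epsilon)\bigr)=\epsilon/(2M^{(1)}_{Z_\epsilon})$, and a second application yields $1/(2\epsilon Q_Z(\epsilon))$; positivity of $Q_Z$ on $(0,E]$ is invoked only to ensure the denominator's derivative does not vanish. So the paper never uses your integration-by-parts identity or your direct estimate $F(\epsilon)\ge M\epsilon$ for the forward implication---it handles both directions by the same L'H\^opital chain.

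Your caveat about the converse is well taken and applies equally to the paper's proof: L'H\^opital only transfers limits from the derivative ratio to the original, not back, so the chain of equalities is rigorous only when $\lim_{\epsilon\to 0}\epsilon Q_Z(\epsilon)$ is assumed to exist in $[0,\infty]$. The paper does not flag this, but all its applications either verify $\epsilon Q_Z(\epsilon)\to\infty$ directly or compute a finite limit, so the issue never bites in practice. Your explicit treatment of the $L\in(0,\infty)$ and $L=0$ cases via the identity $M^{(2)}_{Z_\epsilon}/F(\epsilon)^2=\epsilon/F(\epsilon)-\int_0^\epsilon F/F(\epsilon)^2$ is a genuine addition, making precise exactly what extra regularity is needed for the full equivalence.
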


\begin{proof}
A straightforward application of L'Hôpital's rule twice gives,
\begin{align*}
    \lim_{\epsilon\rightarrow0} \frac{M^{(2)}_{Z_\epsilon}}{M^{(1)^{2}}_{Z_\epsilon}} = \lim_{\epsilon\rightarrow0}\frac{\int_{0}^{\epsilon}z^{2}Q_{Z}(z)dz}{\left(\int_{0}^{\epsilon}zQ_{Z}(z)dz\right)^{2}} = \lim_{\epsilon\rightarrow0} \frac{\epsilon}{2M^{(1)}_{Z_\epsilon}} = \lim_{\epsilon\rightarrow0}\frac{1}{2\epsilon Q_{Z}(\epsilon)},
\end{align*}
where the positivity of $Q_Z(z)$ in $(0, E]$ was required to ensure that the derivative of the denominator is non-zero for all $\epsilon$
small enough.
\end{proof}

\section{Bounds on the marginal convergence rate}

In addition to the asymptotic convergence of the process to a Brownian motion as in Theorems \ref{theorem:NVMConvergenceGaussian} and \ref{theorem:NVMnecessity}, it is possible to compute finite-$\epsilon$ bounds on the distance from Gaussian of the marginals. The following theorem, based on Berry-Ess\'{e}en-style arguments, gives a general result. It may be observed once again (see Corollary~\ref{corollary:SufficientCondition}) that the quantity $\epsilon/M^{(1)}_{Z_\epsilon}$ is of importance in determining the performance of different subordinators $Q_Z$. In the following section we study specific NVM processes within this framework.

\begin{theorem}\label{theorem:RoC}
Consider a truncated NVM L\'{e}vy process $X_{\epsilon}=(X_{\epsilon}(t))$ and let the standardised process $Y_\epsilon$ be defined as 
in Theorem~\ref{theorem:NVMConvergenceGaussian}. Then the Kolmogorov distance $E_\epsilon$ between \(Y_{\epsilon}(1)\) and \(B\sim{\cal N}(0,1)\) satisfies,
        \begin{align}
E_\epsilon
&:= 
    \sup_{x \in \mathbbm{R}}\left|\mathbbm{P}\left[Y^\epsilon(1) \leq x\right]-\mathbbm{P}\left[B \leq x\right]\right|
    \nonumber\\
&\leq
    C   \sigma_{W}^{3}\Phi\left(-\frac{3}{2}, \frac{1}{2}; -\frac{\mu_{W}^{2}}{2\sigma_{W}^{2}}\epsilon\right)\frac{M^{(\frac{3}{2})}_{Z_\epsilon}}{\sigma_{\epsilon}^{3}} \label{eq:NVM_RateOfConvergence}\\
&=C   \sigma_{W}^{3}\frac{M^{(\frac{3}{2})}_{Z_\epsilon}}{\sigma_{\epsilon}^{3}}\big(1+{\cal O}\left(\epsilon\right)\big),
    \qquad\mbox{as}\;\epsilon\to 0,\label{eq:NVM_asympt_rateOfConvergence}
        \end{align}
    where $C=0.7975 \times 2\sqrt{2/\pi}$,
    $\Phi(a, b; m)$ is  the Kummer confluent hypergeometric
    function, and with the obvious extension of~{\em (\ref{eq:Z_moments})} to non-integer moments.
    
    Furthermore, with the same constant $C$, $E_\epsilon$ may be bounded, for $\epsilon\in(0,1]$, as,
    \begin{align} 
    E_\epsilon 
    &\leq C\Phi\left(-\frac{3}{2}, \frac{1}{2}; -\frac{\mu_{W}^{2}}{2\sigma_{W}^{2}}\epsilon\right)\Bigg(\frac{\epsilon}{M^{(1)}_{Z_\epsilon}}\Bigg)^{1/2}
    \label{presimplified_RoC}\\
    &=C\Bigg(\frac{\epsilon}{M^{(1)}_{Z_\epsilon}}\Bigg)^{1/2}\big(1+{\cal O}\left(\epsilon\right)\big),
    \qquad\mbox{as}\;\epsilon\to 0.
    \label{simplified_RoC}
    \end{align}
    \end{theorem}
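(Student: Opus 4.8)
The plan is to treat $X_\epsilon(1)$ as an infinitely divisible random variable and reduce the statement to the classical Berry--Ess\'een theorem, which is the source of the constant $0.7975$. For each $n$ I would write, using stationarity and independence of increments, $X_\epsilon(1)=\sum_{k=1}^n\xi^{(n)}_k$ with $\xi^{(n)}_k:=X_\epsilon(k/n)-X_\epsilon((k-1)/n)$ iid copies of $X_\epsilon(1/n)$. Centring each summand and normalising by $\sigma_\epsilon$, the iid Berry--Ess\'een inequality gives, for every $n$,
\[
E_\epsilon\le 0.7975\,\frac{n\,\mathbbm{E}\big|X_\epsilon(1/n)-\mathbbm{E}X_\epsilon(1/n)\big|^3}{\sigma_\epsilon^3},
\]
where the denominator is exact since $\mathrm{Var}(X_\epsilon(1/n))=\sigma_\epsilon^2/n$ by~(\ref{sigma_eps}), so $(n\,\mathrm{Var}(X_\epsilon(1/n)))^{3/2}=\sigma_\epsilon^3$. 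Letting $n\to\infty$ and invoking the small-time moment asymptotics of L\'evy processes, $n\,\mathbbm{E}|X_\epsilon(1/n)-\mathbbm{E}X_\epsilon(1/n)|^3\to\int_{\mathbbm{R}^*}|x|^3Q_{X_\epsilon}(dx)$, yields the clean bound $E_\epsilon\le 0.7975\,\sigma_\epsilon^{-3}\int_{\mathbbm{R}^*}|x|^3Q_{X_\epsilon}(dx)$. (Alternatively one could invoke a Berry--Ess\'een bound stated directly for infinitely divisible laws, but the limiting iid argument is the most self-contained.)

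The second step is to evaluate the third absolute moment of the residual L\'evy measure via the NVM structure~(\ref{eq:Q_X_eps}). Conditioning on $z$, the inner integral is the third absolute moment of a $\mathcal{N}(\mu_{W}z,\sigma_{W}^2z)$ law, which by the standard Gaussian absolute-moment formula equals $2\sqrt{2/\pi}\,\sigma_{W}^3z^{3/2}\,\Phi(-3/2,1/2;-\mu_{W}^2z/(2\sigma_{W}^2))$, with $\Phi$ the Kummer confluent hypergeometric function. Hence $\int|x|^3Q_{X_\epsilon}(dx)=2\sqrt{2/\pi}\,\sigma_{W}^3\int_0^\epsilon z^{3/2}\Phi(-3/2,1/2;-\mu_{W}^2z/(2\sigma_{W}^2))Q_Z(dz)$, which is finite by the Gaussian tails of~(\ref{eq:Q_X_eps}) and~(\ref{M_condition}). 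To pull $\Phi$ out of the integral I would show that $m\mapsto\Phi(-3/2,1/2;m)$ is decreasing on $(-\infty,0]$: from $\frac{d}{dm}\Phi(-3/2,1/2;m)=-3\,\Phi(-1/2,3/2;m)$ together with Kummer's transformation $\Phi(-1/2,3/2;m)=e^m\,\Phi(2,3/2;-m)>0$ for $m\le0$, the derivative is negative. Thus the $\Phi$-factor is maximised at $z=\epsilon$; bounding it by its value there and recognising the remaining integral as $M^{(3/2)}_{Z_\epsilon}$ produces~(\ref{eq:NVM_RateOfConvergence}), and the asymptotic~(\ref{eq:NVM_asympt_rateOfConvergence}) follows from $\Phi(-3/2,1/2;-\mu_{W}^2\epsilon/(2\sigma_{W}^2))=1+\mathcal{O}(\epsilon)$.

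For the simplified bounds~(\ref{presimplified_RoC})--(\ref{simplified_RoC}) I would discard the $\mu_{W}^2M^{(2)}_{Z_\epsilon}$ term in~(\ref{sigma_eps}), using $\sigma_\epsilon^3\ge\sigma_{W}^3(M^{(1)}_{Z_\epsilon})^{3/2}$, and then apply~(\ref{M_condition}) with $n=3/2,\,m=1$, i.e.\ $M^{(3/2)}_{Z_\epsilon}\le\epsilon^{1/2}M^{(1)}_{Z_\epsilon}$ for $\epsilon\in(0,1]$; this collapses $\sigma_{W}^3M^{(3/2)}_{Z_\epsilon}/\sigma_\epsilon^3$ to $(\epsilon/M^{(1)}_{Z_\epsilon})^{1/2}$, and $\Phi=1+\mathcal{O}(\epsilon)$ gives the asymptotic form. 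I expect the main obstacle to be the first step: rigorously justifying the $n\to\infty$ passage, namely the small-time third-moment asymptotic $n\,\mathbbm{E}|X_\epsilon(1/n)-\mathbbm{E}X_\epsilon(1/n)|^3\to\int|x|^3Q_{X_\epsilon}(dx)$, where one must verify that the $\mathcal{O}(1/n)$ centring is negligible in the limit and that the finiteness of $\int|x|^3Q_{X_\epsilon}(dx)$ legitimises both the Berry--Ess\'een hypothesis and the interchange of limit and integral.
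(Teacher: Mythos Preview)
Your proposal is correct and follows essentially the same route as the paper. The only differences are that the paper does not re-derive the Berry--Ess\'een bound for infinitely divisible laws: it simply quotes $E_\epsilon\le 0.7975\,\sigma_\epsilon^{-3}\int_{\mathbbm{R}}|x|^3Q_{X_\epsilon}(dx)$ from \cite[Theorem~2.1]{SmallJumps} via \cite[Theorem~3.1]{LevyStateSpaceModel}, thereby bypassing the small-time limit you flag as the main obstacle (that limit is, in fact, precisely Asmussen--Rosinski's Lemma~3.1, so your self-contained argument reproduces their proof). Likewise, for the monotonicity of $z\mapsto\Phi(-3/2,1/2;-\mu_W^2 z/(2\sigma_W^2))$ the paper just cites~\cite{KummerSeries} rather than computing the derivative as you do. The Gaussian third-absolute-moment computation, the extraction of $M^{(3/2)}_{Z_\epsilon}$, the $\Phi=1+\mathcal{O}(\epsilon)$ expansion, and the simplification via $\sigma_\epsilon^2\ge\sigma_W^2M^{(1)}_{Z_\epsilon}$ and $M^{(3/2)}_{Z_\epsilon}\le\epsilon^{1/2}M^{(1)}_{Z_\epsilon}$ are identical to the paper's.
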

    \begin{proof}
        Arguing as in the proof of~\cite[Theorem~3.1]{LevyStateSpaceModel}, which was derived from~\cite[Theorem~2.1]{SmallJumps}, the Kolmogorov distance $E_\epsilon$ between \(Y^\epsilon(1)\) and \(B\sim{\cal N}(0,1)\) is bounded above by,
        \begin{align}
           E_\epsilon= \sup_{x \in \mathbbm{R}}\left|\mathbbm{P}\left[Y^\epsilon(1) \leq x\right]-\mathbbm{P}\left[B \leq x\right]\right| \leq 0.7975\sigma_{\epsilon}^{-3}\int_{\mathbbm{R}}|x|^{3}Q^{\epsilon}_{X}(dx), \label{eq:RateOfConvergence}
        \end{align}
        where \(\sigma_{\epsilon}^{2}\) is the variance of the NVM process. From~(\ref{eq:NVMVarianceSub}) it follows that,
        \begin{equation}
            \sigma_{\epsilon}^{2} = \mu_{W}^{2}M_{Z_{\epsilon}}^{(2)} + \sigma_{W}^{2}M_{Z_{\epsilon}}^{(1)}
            \geq  \sigma_{W}^{2}M_{Z_{\epsilon}}^{(1)}.\label{eq:sig_sq_bound}
        \end{equation}
        Using Fubini's theorem, the third absolute moment of the residual process can be expressed as,
        \begin{align*}
            \mathcal{S} &:= \int_{\mathbbm{R}}|x|^{3}Q^{\epsilon}_{X}(dx)\\
            &= \int_{-\infty}^{\infty}|x|^{3}\int_{0}^{\epsilon}\mathcal{N}(x;\mu_{W}z, \sigma_{W}^{2}z)Q_{Z}(dz)dx\\
            & = \int_{0}^{\epsilon}z^{\frac{3}{2}}\sigma_{W}^{3}2^{\frac{3}{2}}\frac{\Gamma(2)}{\sqrt{\pi}}\Phi\left(-\frac{3}{2}, \frac{1}{2}; -\frac{\mu_{W}^{2}}{2\sigma_{W}^{2}}z\right)Q_{Z}(dz),
        \end{align*}
    and since the Kummer confluent hyper-geometric function is increasing for non-negative \(z\)~\cite{KummerSeries}, we can bound ${\cal S}$ as, 
\begin{align*}
            \mathcal{S}& \leq
            \sigma_{W}^{3}2^{\frac{3}{2}}\frac{\Gamma(2)}{\sqrt{\pi}}\Phi\left(-\frac{3}{2}, \frac{1}{2}; -\frac{\mu_{W}^{2}}{2\sigma_{W}^{2}}\epsilon\right)\int_{0}^{\epsilon}z^{\frac{3}{2}}Q_{Z}(dz)\\
            &= \sigma_{W}^{3}2^{\frac{3}{2}}\frac{\Gamma(2)}{\sqrt{\pi}}\Phi\left(-\frac{3}{2}, \frac{1}{2}; -\frac{\mu_{W}^{2}}{2\sigma_{W}^{2}}\epsilon\right)M^{(\frac{3}{2})}_{Z_\epsilon}.
        \end{align*}
Substituting this bound into~(\ref{eq:RateOfConvergence}) and using (\ref{eq:sig_sq_bound}), we obtain (\ref{eq:NVM_RateOfConvergence}). Then, using the expansion,
  \begin{equation}
        \Phi(a,b;z)=\sum_{n=0}^\infty \frac {a^{(n)} z^n} {b^{(n)} n!},\label{series_hyper}
        \end{equation}
        where
        $a^{(0)}=1$ and $a^{(n)}=a(a+1)(a+2)\cdots(a+n-1)$, we obtain,
        \[
        \Phi\left(-\frac{3}{2}, \frac{1}{2}; -\frac{\mu_{W}^{2}}{2\sigma_{W}^{2}}\epsilon\right)=1+\frac{3\mu_{W}^{2}}{2\sigma_{W}^{2}}\epsilon+{\cal O}(\epsilon^2),
        \]
from which the asymptotic expansion (\ref{eq:NVM_asympt_rateOfConvergence}) is obtained.

Now, the term $M^{(\frac{3}{2})}_{Z_\epsilon}/\sigma_{\epsilon}^{3}$ is bounded using (\ref{M_condition}) and (\ref{eq:sig_sq_bound}), for $\epsilon\in(0,1]$, as,
\[
\frac{M^{(\frac{3}{2})}_{Z_\epsilon}}{\sigma_{\epsilon}^{3}}\leq \frac{\epsilon^{1/2}M^{(1)}_{Z_\epsilon}}{\sigma_{W}^{3}{M^{(1)}_{Z_\epsilon}}^{3/2}}=\frac{\epsilon^{1/2}}{\sigma_{W}^{3}{M^{(1)}_{Z_\epsilon}}^{1/2}},
\]
which yields (\ref{presimplified_RoC}), and applying (\ref{series_hyper}) once again leads to (\ref{simplified_RoC}).
\end{proof}

\section{Examples}\label{section:ExampleCases}
In this section, the validity of the conditions in Theorems~\ref{theorem:NVMConvergenceGaussian}
and~\ref{theorem:NVMnecessity} on the Gaussian convergence of the residual process is examined for several important cases of NVM L\'{e}vy processes. Simulation results validating the corresponding conclusions are also shown, and explicit bounds on the rate of convergence to the Gaussian are derived in some special cases, using the general framework of Theorem \ref{theorem:RoC}.

\subsection{Normal-Gamma (NG) process \label{section:NGProof}}
    The subordinator of the NG process is a Gamma process, with parameters $\nu, \gamma>0$ and with Lévy density:
    \begin{equation*}
        Q_{Z}(z) = \nu z^{-1} \exp\left(-\frac{1}{2}\gamma^{2}z\right), \quad z>0.
    \end{equation*}
Here, in view of Corollary~\ref{corollary:SufficientCondition},
\begin{align*}
    \lim_{\epsilon\rightarrow0}\frac{M_{Z_\epsilon}^{(2)}}{M_{Z_\epsilon}^{(1)^2}}=
    \lim_{\epsilon\rightarrow0}\frac{1}{2\epsilon Q_{Z}(\epsilon)} = \frac{1}{2\nu} > 0,
\end{align*}
and also,
\begin{align*}
    \lim_{\epsilon\to0}\frac{M_{Z_\epsilon}^{(3)}}{\sigma_\epsilon^6}= \frac{1}{3\nu^{2}\sigma_{W}^{6}} >0.
\end{align*}
Therefore, $L_1$ and $L_2$ in Theorem~\ref{theorem:NVMnecessity} are both nonzero,
so we expect the residuals of the NG process  {\em not} to be approximately normally distributed.

Furthermore, since $M^{(n)}_{Z_\epsilon}=\frac{\nu}{b^n}\gamma(n,b\epsilon)$, where $b=\gamma^{2}/2$, we have from (\ref{eq:NVM_RateOfConvergence}) that,
 \begin{align*}
           E_\epsilon \leq C   \sigma_{W}^{3}\Phi\left(-\frac{3}{2}, \frac{1}{2}; -\frac{\mu_{W}^{2}}{2\sigma_{W}^{2}}\epsilon\right)\frac{\frac{\nu}{b^{3/2}}\gamma(3/2,b\epsilon)}{(\mu_W^2\frac{\nu}{b^2}\gamma(2,b\epsilon)+\sigma_W^2\frac{\nu}{b}\gamma(1,b\epsilon))^{3/2}}={\cal O}(1),\,\,\text{{\em as\/} $\epsilon\to 0$},
        \end{align*}
where we have obtained the asymptotic behaviour using $\gamma(s, x) =  x^s\,\sum_{k=0}^\infty \frac{(-x)^k}{k!(s+k)}$ (for positive, real $s$ and $x$) and (\ref{series_hyper}). Hence, as expected, the bound on the distance from Gaussianity, $E_\epsilon$, does not tend to zero as $\epsilon\to 0$. 

In order to verify this empirically, we generate a random sample of \(M=10^4\) residual NG paths
with parameters $\mu=0$, $\mu_w=1$, $\sigma_W=\nu=2$ and $\gamma=\sqrt{2}$, and compare the empirical distribution of the values of the residual at time \(t=1\) with a standard normal distribution by standardising the residual values to have zero mean and unit variance. As expected, the resulting histograms are not approximately Gaussian.
Figures~\ref{fig:NGCLTVerification1} and~\ref{fig:NGCLTVerification2} show the distribution of the residual (with truncation level $\epsilon=10^{-6}$)
is in fact leptokurtic and heavier-tailed than the standard normal. Further simulations confirmed this empirical observation even for smaller truncation levels $\epsilon$. 
\begin{figure}[H]
    \begin{centering}
    \scalebox{0.55}{\includegraphics{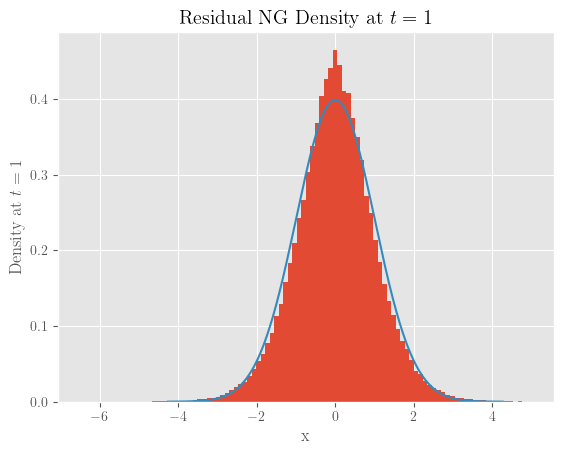}}
    \caption{Histogram of \(M=10^4\) NG residual path values at \(t=1\). The blue curve represents the standard normal density.} 
    \label{fig:NGCLTVerification1}
    \end{centering}
\end{figure}
\begin{figure}[H]
    \begin{centering}  
    \scalebox{0.55}{\includegraphics{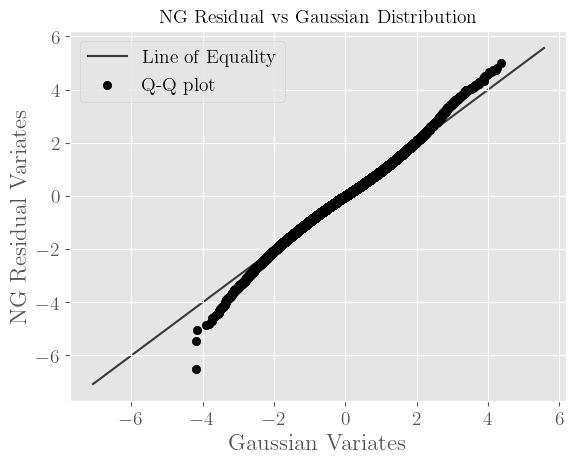}}
    \caption{Q-Q plot of \(M=10^5\) NG residual path values at \(t=1\).}
    \label{fig:NGCLTVerification2}
    \end{centering}
\end{figure}
    
\subsection{Normal tempered stable (NTS) process \label{section:NTSProof}}
    The subordinator for the NTS process is the tempered stable (TS) process TS\((\kappa, \delta, \gamma)\),  for \( \kappa \in (0,1), \ \delta > 0, \ \gamma \geq 0\), which has a Lévy density,
    \begin{equation*}
        Q_{Z}(z)  =  Az^{-1-\kappa}\exp\left(-\frac{1}{2}\gamma^{\frac{1}{\kappa}}z\right), \quad z > 0,
    \end{equation*}
    where \(A = \delta\kappa 2^{\kappa}\Gamma^{-1}(1-\kappa)\) and \(\Gamma^{-1}(\cdot)\) is the reciprocal of the Gamma function. 
    Here, 
    \begin{equation*}
        \lim_{\epsilon\rightarrow 0}\epsilon Q_{Z}(\epsilon) = \lim_{\epsilon\rightarrow 0}A\epsilon^{-\kappa}\exp\left(-\frac{1}{2}\gamma^{\frac{1}{\kappa}}\epsilon\right) =  +\infty, 
    \end{equation*}
    since $\kappa\in(0,1)$. Therefore, in view of Corollary~\ref{corollary:SufficientCondition} and 
    Theorem~\ref{theorem:NVMConvergenceGaussian}, the residuals are expected to be approximately
    Gaussian. Moreover, in this case we can derive a bound on the corresponding marginal convergence rate. 
    

    \begin{lemma}\label{lemma:NTSRoC}
    For $\epsilon\in(0,1)$, let $(Y_{\epsilon}(t))$ denote the standardised truncated process associated to an NVM process subordinated to the residual TS process ${\rm TS}(\kappa, \delta, \gamma)$. If \(B\sim\mathcal{N}(0,1)\), then the Kolmogorov distance $E_\epsilon$ between \(Y_{\epsilon}(1)\) and \(B\) satisfies,
    \begin{align}
        \label{eq:NTSbound}
         E_{\epsilon} \leq 
            \frac{0.7975\times 2^{\frac{3}{2}}\sqrt{\Gamma(1-\kappa)}}{\sqrt{\delta\kappa\pi\gamma}}\times
            \Phi_{\epsilon}\gamma\left(1-\kappa,\frac{1}{2}\epsilon\gamma^{\frac{1}{\kappa}}\right)^{-\frac{3}{2}}
            \gamma\left(\frac{3}{2}-\kappa, \frac{1}{2}\gamma^{\frac{1}{\kappa}}\epsilon\right),
    \end{align}
    where,
    \begin{align*}
        \Phi_{\epsilon} = \Phi\left(-\frac{3}{2}, \frac{1}{2}; -\frac{\mu_{W}^{2}}{2\sigma_{W}^{2}}\epsilon\right),
    \end{align*}
    and \(\Phi(a, b; m), \gamma(s,x)\) are the Kummer confluent hypergeometric function and the incomplete lower gamma function, respectively. Further, as \(\epsilon\rightarrow 0\) we have:
    \begin{equation}
        E_{\epsilon} \leq \frac{0.7975\times 2^{\frac{3}{2}-\frac{\kappa}{2}}(1-\kappa)^\frac{3}{2}\sqrt{\Gamma(1-\kappa)}}{(\frac{3}{2}-\kappa)\sqrt{\delta\kappa\pi}}\epsilon^{\frac{\kappa}{2}}+\mathcal{O}(\epsilon^{1+\frac{\kappa}{2}}).
        \label{eq:lemasymptotic}
    \end{equation}
    \end{lemma}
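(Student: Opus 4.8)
The plan is to apply the general Berry-Ess\'een bound of Theorem~\ref{theorem:RoC} directly to the tempered stable subordinator, reducing the whole argument to an explicit evaluation of the moments $M^{(n)}_{Z_\epsilon}$ for the TS L\'evy density. Writing $b=\frac{1}{2}\gamma^{1/\kappa}$, the substitution $u=bz$ in~(\ref{eq:Z_moments}) gives
$$M^{(n)}_{Z_\epsilon}=A\int_0^\epsilon z^{n-1-\kappa}e^{-bz}\,dz=\frac{A}{b^{\,n-\kappa}}\,\gamma(n-\kappa,b\epsilon),$$
where $\gamma(s,x)$ is the lower incomplete gamma function and $A=\delta\kappa 2^\kappa/\Gamma(1-\kappa)$. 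This single computation drives everything; it is valid for $n=1$ and the fractional order $n=3/2$ precisely because $\kappa\in(0,1)$ keeps the integrand integrable at the origin.

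First I would record the two moments appearing in the bound~(\ref{eq:NVM_RateOfConvergence}), namely $M^{(1)}_{Z_\epsilon}$ and the non-integer moment $M^{(3/2)}_{Z_\epsilon}$, and then use $\sigma_\epsilon^2\geq\sigma_W^2 M^{(1)}_{Z_\epsilon}$ from~(\ref{eq:sig_sq_bound}) to replace $\sigma_\epsilon^3$ in the denominator by $\sigma_W^3(M^{(1)}_{Z_\epsilon})^{3/2}$. The factors of $\sigma_W^3$ then cancel, leaving $E_\epsilon\leq C\,\Phi_\epsilon\, M^{(3/2)}_{Z_\epsilon}/(M^{(1)}_{Z_\epsilon})^{3/2}$. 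Substituting the explicit moments, the constant $A$ contributes a net $A^{-1/2}$ and the powers of $b$ combine to $b^{-\kappa/2}$. Since $A^{-1/2}=\sqrt{\Gamma(1-\kappa)}\,(\delta\kappa)^{-1/2}2^{-\kappa/2}$ and $b^{-\kappa/2}=2^{\kappa/2}\gamma^{-1/2}$, the powers of two cancel and one is left with $\sqrt{\Gamma(1-\kappa)}/\sqrt{\delta\kappa\gamma}$. Combining this with $C=0.7975\times 2^{3/2}/\sqrt{\pi}$ produces exactly the prefactor in~(\ref{eq:NTSbound}), while $b\epsilon=\frac{1}{2}\gamma^{1/\kappa}\epsilon$ supplies the arguments of the two incomplete gamma functions, giving the ratio $\gamma(\tfrac{3}{2}-\kappa,b\epsilon)\,\gamma(1-\kappa,b\epsilon)^{-3/2}$.

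For the asymptotic bound~(\ref{eq:lemasymptotic}) I would feed the small-argument series $\gamma(s,x)=\frac{x^s}{s}\bigl(1+\mathcal{O}(x)\bigr)$ into that ratio. The exponents of $b\epsilon$ combine as $(\tfrac{3}{2}-\kappa)-\tfrac{3}{2}(1-\kappa)=\kappa/2$, so the ratio behaves like $\frac{(1-\kappa)^{3/2}}{\,3/2-\kappa\,}(b\epsilon)^{\kappa/2}\bigl(1+\mathcal{O}(\epsilon)\bigr)$; absorbing $b^{\kappa/2}=2^{-\kappa/2}\gamma^{1/2}$ cancels the residual $\sqrt{\gamma}$ and yields the power $2^{3/2-\kappa/2}$. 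The Kummer factor contributes only $\Phi_\epsilon=1+\mathcal{O}(\epsilon)$ through the series~(\ref{series_hyper}), which I would merge into the $\mathcal{O}(\epsilon^{1+\kappa/2})$ remainder. The only genuine care needed, and the step most likely to hide an arithmetic slip, is the bookkeeping of the powers of $A$, $b$, and $\gamma$ in the prefactor; conceptually the lemma is a direct specialisation of Theorem~\ref{theorem:RoC} requiring no new probabilistic input.
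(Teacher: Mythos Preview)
Your proposal is correct and follows essentially the same route as the paper: apply Theorem~\ref{theorem:RoC}, use $\sigma_\epsilon^2\geq\sigma_W^2 M^{(1)}_{Z_\epsilon}$, compute $M^{(1)}_{Z_\epsilon}$ and $M^{(3/2)}_{Z_\epsilon}$ explicitly for the TS density via the lower incomplete gamma function, and then expand $\gamma(s,x)\sim x^s/s$ and $\Phi_\epsilon=1+\mathcal{O}(\epsilon)$ for the asymptotic. Your use of the shorthand $b=\tfrac{1}{2}\gamma^{1/\kappa}$ makes the constant-tracking slightly tidier than the paper's version, but the argument and all the bookkeeping of powers of $A$, $b$, and $\gamma$ are identical.
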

    \begin{proof}
        From Theorem \ref{theorem:RoC} we obtain,
        $E_{\epsilon} \leq C  \sigma_{W}^{3}\Phi_{\epsilon}M^{(\frac{3}{2})}_{Z_\epsilon}/\sigma_{\epsilon}^{3}$,
        and from (\ref{eq:NVMVarianceSub}) it follows that,
        \begin{equation*}
            \sigma_{\epsilon}^{2} = \mu_{W}^{2}M_{Z_{\epsilon}}^{(2)} + \sigma_{W}^{2}M_{Z_{\epsilon}}^{(1)}
            \geq  \sigma_{W}^{2}M_{Z_{\epsilon}}^{(1)},
        \end{equation*}
        where the residual first moment is given by,
        $$
            M^{(1)}_{Z_{\epsilon}} = \int_{0}^{\epsilon}zAz^{-1-\kappa}\exp\left(-\frac{1}{2}\gamma^{\frac{1}{\kappa}}z\right)dz=A \gamma^{\frac{\kappa-1}{\kappa}}2^{1-\kappa}\gamma\left(1-\kappa,\frac{1}{2}\epsilon\gamma^{\frac{1}{\kappa}}\right). $$
        To find \(M_{Z_{\epsilon}}^{\left(\frac{3}{2}\right)}\), substitute the Lévy density of the TS process described earlier,
        \begin{align*}
            M_{Z_{\epsilon}}^{\left(\frac{3}{2}\right)} &=  A\int_{0}^{\epsilon}z^{\frac{1}{2}-\kappa}e^{-\frac{1}{2}\gamma^{\frac{1}{\kappa}}z}dz =A\left(\frac{1}{2}\gamma^{\frac{1}{\kappa}}\right)^{\kappa - \frac{3}{2}}\gamma\left(\frac{3}{2}-\kappa, \frac{1}{2}\gamma^{\frac{1}{\kappa}}\epsilon\right),
        \end{align*}
        Substituting this in~(\ref{eq:RateOfConvergence}), noting \(A = \delta \kappa 2^{\kappa}\Gamma^{-1}(1-\kappa)\) and \(C = 0.7975 \times 2\sqrt{2/\pi}\) yields, 
        \begin{align*}
        E_{\epsilon} \leq 
        0.7975\times \frac{2^{\frac{3}{2}}\sqrt{\Gamma(1-\kappa)}}{\sqrt{\delta\kappa\pi\gamma}}\Phi_{\epsilon}\gamma\left(1-\kappa,\frac{1}{2}\epsilon\gamma^{\frac{1}{\kappa}}\right)^{-\frac{3}{2}}\gamma\left(\frac{3}{2}-\kappa, \frac{1}{2}\gamma^{\frac{1}{\kappa}}\epsilon\right),
        \end{align*}
        as claimed. Finally, recalling the series expansion for $\gamma(s,x)$ from the previous section, the asymptotic expression in (\ref{eq:NVM_asympt_rateOfConvergence}) leads to (\ref{eq:lemasymptotic}).
    \end{proof}
    Next, we examine the empirical accuracy of the Gaussian approximation in this case.
    Figures~\ref{fig:NTSCLTVerification1} and~\ref{fig:NTSCLTVerification2} show the empirical distribution 
    of the residual NTS process at \(t=1\), with parameter values $\mu=0$, $\mu_W=\delta=1$, $\sigma_W=2$, $\kappa=1/2$,
    $\gamma=1.35$,
    and truncation level $\epsilon=10^{-6}$.

    \begin{figure}[H]
        \begin{centering}
        \scalebox{0.55}{\includegraphics{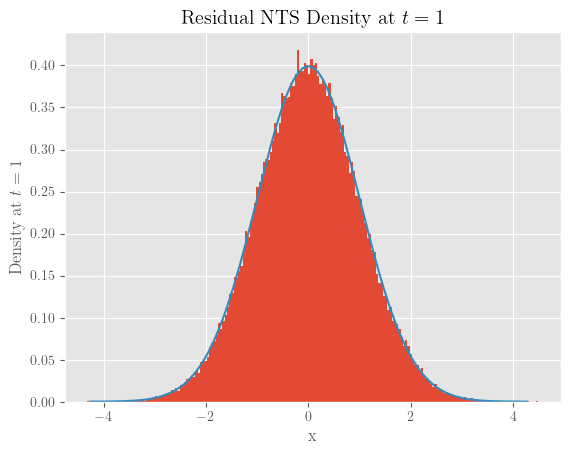}}
        \caption{Histogram of \(M = 10^5\) NTS residual path values at \(t=1\). The blue curve represents the standard normal density. \label{fig:NTSCLTVerification1}}
        \end{centering}
    \end{figure}
    
    \begin{figure}[H]
        \begin{centering}
        \scalebox{0.55}{\includegraphics{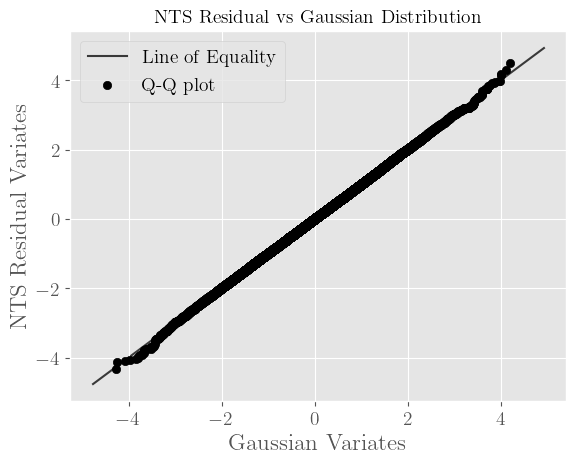}}
        \caption{Q-Q plot of \(M = 10^5\) NTS residual path values at \(t=1\).  \label{fig:NTSCLTVerification2}}
        \end{centering}
    \end{figure}

With the same parameter values, Figure~\ref{fig:NTSConvergenceRate} shows behaviour of the bound in~(\ref{eq:NTSbound})
and the first term in the asymptotic bound~(\ref{eq:lemasymptotic}).

     \begin{figure}[H]
        \begin{centering}
        \scalebox{0.55}{\includegraphics{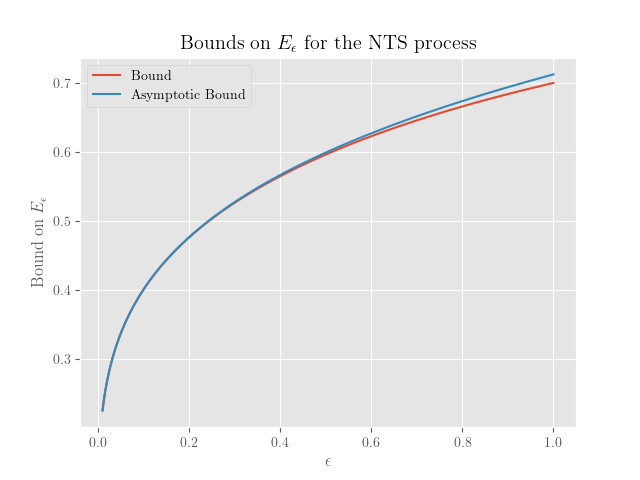}}
        \caption{Plot of the finite-$\epsilon$ bound in~(\ref{eq:NTSbound})
        and the first term in the asymptotic bound~(\ref{eq:lemasymptotic}) 
        for the approximation error $E_\epsilon$ in Lemma~\ref{lemma:NTSRoC}.}
        \label{fig:NTSConvergenceRate}
        \end{centering}
    \end{figure}
    
\subsection{Generalised hyperbolic (GH) process \label{section:GHProof}}
    Finally, we consider the general class of GH processes. The subordinator in this case is
    the generalized inverse Gaussian
    GIG(\(\lambda, \delta, \gamma\)) process, with constraints on parameter values as detailed in~\cite{GIGLevy}, 
    and with Lévy density given by~\cite{GIGLevy},
    \begin{equation*}
        Q_{Z}(z) = \frac{\exp \left(-z\gamma^{2}/2\right)}{z}
        \left[\max(0, \lambda)+\frac{2}{\pi^{2}}\int_{0}^{\infty}\frac{1}{y\big|H_{|\lambda|}(y)\big|^{2}} 
        \exp \Big( -\frac{zy^{2}}{2\delta^{2}}\Big) dy\right]\mathbbm{1}(z > 0),
    \end{equation*}
    where \(H_{v}(z)\) is the Hankel function of real order \(v\). { A direct verification of  the sufficient condition in Corollary \ref{corollary:SufficientCondition} is readily obtained as  
    
    \begin{align*}
        \lim_{\epsilon\rightarrow0}\epsilon Q_{Z}(\epsilon) 
        &= \lim_{\epsilon\rightarrow 0}\exp\Big(-\frac{\gamma^{2}}{2}\epsilon\Big)\max(0,\lambda) 
            + \frac{2}{\pi^{2}}\lim_{\epsilon\rightarrow0}\int_{0}^{\infty}\frac{1}{y\big|H_{|\lambda|}(y)\big|^{2}} 
            \exp\Big(-\frac{\epsilon\gamma^{2}}{2} -\frac{\epsilon y^{2}}{2\delta^{2}}\Big) dy \\
        &= \max(0,\lambda) + \frac{2}{\pi^{2}}\int_{0}^{\infty}\frac{1}{y\big|H_{|\lambda|}(y)\big|^{2}}dy\\
        &=  +\infty,
    \end{align*}
    since \({y\big|H_{\nu}(y)\big|^{2}}\) is non-zero for $z\in [z_1,\infty)$ where $z_1 = \left(\frac{ 2^{1-2\nu}\pi}{\Gamma^2(\nu)}\right)^{1/(1-2\nu)}$, see e.g. \cite{GIGLevy} Theorem 2 and \cite{Watson1944}.}

    Once again, in view of Corollary~\ref{corollary:SufficientCondition} and 
    Theorem~\ref{theorem:NVMConvergenceGaussian}, the residuals are expected to be approximately
    Gaussian. Indeed, in this case we can derive the following bound on the corresponding marginal convergence rate.
    
    \begin{lemma}\label{lemma:GHRoC}
    
    For $\epsilon\in(0,1)$, let \(Y_{\epsilon}(t)\) denote denote the standardised truncated process associated to
    an NVM process subordinated to the residual \({\rm GIG (}\lambda, \delta, \gamma{\rm )}\) process. If \(B\sim\mathcal{N}(0,1)\), then
    for any $z_0\in(0,\infty)$ the Kolmogorov distance $E_\epsilon$ between \(Y_{\epsilon}(1)\) and \(B\) can be bounded as,
    \begin{align}
    E_{\epsilon}
        &\leq \frac{{0.7975\Phi_\epsilon\gamma^{3/2}}}{{\rm erf}\left(\gamma\frac{\sqrt{\epsilon}}{\sqrt{2}}\right)^{\frac{3}{2}}} \biggl(
        \frac{2\max(0,\lambda)}{{\Tilde{\pi}}(b\delta)^{3/2}}\gamma\left(\frac{3}{2}, b\epsilon\right)
        + \frac{2^{|\lambda|{+1}}\delta^{2|\lambda|-\frac{3}{2}}\Gamma(|\lambda|)}
        {{\pi^{2}\Tilde{\pi}} H_{0}z_{0}^{2|\lambda|-1}b^{3/2-|\lambda|}}\times
        \gamma\left(\frac{3}{2}-|\lambda|, b\epsilon\right)
            \nonumber\\
        &\qquad \qquad\qquad\qquad \qquad\qquad
        + \frac{1}{{\Tilde{\pi}^{4}}H_{0}b\sqrt{\delta}}\gamma\left(1, b\epsilon\right)\biggr),\qquad\mbox{for} \;|\lambda| \leq \frac{1}{2},
       \nonumber\\
    E_{\epsilon}
        & \leq \frac{{0.7975\Phi_\epsilon\left(\gamma H_0\right)^{3/2}}}{{\rm erfc}\left( \frac{z_{0}}{\delta\sqrt{2}}\sqrt{\epsilon}\right)^{\frac{3}{2}}{\rm erf}(\gamma\frac{\sqrt{\epsilon}}{\sqrt{2}})^{\frac{3}{2}}} \biggl( 
        {\frac{\max(0,\lambda)\pi}{(b\delta)^{\frac{3}{2}}}}
        \times
        \gamma\left(\frac{3}{2}, b\epsilon\right)
        \nonumber\\
        &\qquad \qquad\qquad\qquad \qquad\qquad
        + {\frac{\Tilde{\pi}}{b\sqrt{\delta}}}
        \times \gamma\left(1, b\epsilon\right)\biggr), \qquad\mbox{for}\;|\lambda| > \frac{1}{2},
        \label{eq:GHlemmabound}
    \end{align}
    where $b = \frac{\gamma^{2}}{2}$, $\Tilde{\pi} = \sqrt{\frac{\pi}{2}}$, $H_{0} = z_{0}\big|H_{|\lambda|}(z_{0})\big|^{2},$ and,
    \begin{align*}
        \Phi_{\epsilon} = \Phi\left(-\frac{3}{2}, \frac{1}{2}; -\frac{\mu_{W}^{2}}{2\sigma_{W}^{2}}\epsilon\right),
    \end{align*}
    with $\Phi(a, b; m)$, $\gamma(s,x)$, ${\rm erf}(x)$, and ${\rm erfc}(x)$\footnote{{Using the standard definitions ${\rm {erf}} (x) = \frac{2}{\sqrt\pi}\int_0^x e^{-t^2}\,\mathrm dt$ and ${\rm {erfc}} (x)=1-{\rm {erf}} (x)$}} denoting the Kummer confluent hypergeometric function, 
    the incomplete lower gamma function, the error function, and the complementary error function, respectively. Further, as \(\epsilon\rightarrow 0\) we have:
    \begin{align}
        E_{\epsilon} & \leq
            {\frac{0.7975}{\Tilde{\pi}^{\frac{5}{2}}bH_{0}\sqrt{\delta}}}\epsilon^{\frac{1}{4}}+\mathcal{O}(\epsilon^{\frac{5}{4}}), \qquad\mbox{for}\;|\lambda| \leq \frac{1}{2}, \nonumber \\
        E_{\epsilon} &\leq {\frac{0.7975 \Tilde{\pi}^{\frac{5}{2}}H_{0}^{\frac{3}{2}}}{b\sqrt{\delta}}}\epsilon^{\frac{1}{4}} + \mathcal{O}\left(\epsilon^{\frac{5}{4}}\right), \qquad\mbox{for}\;|\lambda| > \frac{1}{2}.\label{eq:lemasymptotic2}
    \end{align}
    \end{lemma}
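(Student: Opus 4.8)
The plan is to specialise the general Berry--Ess\'{e}en bound of Theorem~\ref{theorem:RoC} to the GIG subordinator, so that the entire task reduces to producing an upper bound on $M^{(3/2)}_{Z_\epsilon}$ and a lower bound on $M^{(1)}_{Z_\epsilon}$ for the Lévy density of the residual GIG process. Starting from $E_\epsilon\le C\sigma_W^3\Phi_\epsilon M^{(3/2)}_{Z_\epsilon}/\sigma_\epsilon^3$ together with the variance bound $\sigma_\epsilon^2\ge\sigma_W^2 M^{(1)}_{Z_\epsilon}$ (exactly as in the proof of Lemma~\ref{lemma:NTSRoC}), I obtain $E_\epsilon\le C\Phi_\epsilon M^{(3/2)}_{Z_\epsilon}/(M^{(1)}_{Z_\epsilon})^{3/2}$, with $C=0.7975\times 2\sqrt{2/\pi}=0.7975\times 2/\tilde\pi$. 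Substituting the GIG density and using Fubini to interchange the $z$- and $y$-integrations, each moment takes the form $\int_0^\epsilon z^{n-1}e^{-bz}[\max(0,\lambda)+\tfrac{2}{\pi^2}\int_0^\infty A(y)^{-1}e^{-zy^2/(2\delta^2)}\,dy]\,dz$, where $b=\gamma^2/2$ and $A(y):=y|H_{|\lambda|}(y)|^2$. The $z$-integrals will produce the lower incomplete gamma factors $\gamma(s,b\epsilon)$, while the Gaussian $y$-integrals produce the $\delta$, $z_0$, ${\rm erf}$ and ${\rm erfc}$ factors.

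The main difficulty, and the place where the two cases split, is controlling the inner $y$-integral, since $A(y)$ has no closed form. Here I would invoke the two asymptotic regimes of $A$ (see~\cite{GIGLevy,Watson1944}): $A(y)\to 2/\pi=1/\tilde\pi^2$ as $y\to\infty$, and $A(y)\sim \pi^{-2}\Gamma(|\lambda|)^2 2^{2|\lambda|}y^{1-2|\lambda|}$ as $y\to 0$. The sign of $1-2|\lambda|$ is exactly what dictates the dichotomy. For $|\lambda|>1/2$, $A$ decreases to its limit so $A(y)\ge 2/\pi$ globally, giving $A(y)^{-1}\le\tilde\pi^2$; this bounds the inner integral by a single Gaussian and yields the two-term upper bound on $M^{(3/2)}_{Z_\epsilon}$. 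For $|\lambda|\le1/2$, $A(y)^{-1}$ is unbounded near $0$, so I would split the inner integral at the free parameter $z_0$: on $(z_0,\infty)$ use $A(y)\ge H_0=z_0|H_{|\lambda|}(z_0)|^2$, while on $(0,z_0)$ use the small-argument bound $A(y)\ge H_0(y/z_0)^{1-2|\lambda|}$ together with $\int_0^\infty y^{2|\lambda|-1}e^{-zy^2/(2\delta^2)}\,dy=\tfrac12(2\delta^2/z)^{|\lambda|}\Gamma(|\lambda|)$. This produces the three-term bound, the middle term carrying the $\Gamma(|\lambda|)$, $z_0^{2|\lambda|-1}$ and $\gamma(\tfrac32-|\lambda|,b\epsilon)$ factors.

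For the denominator I would lower bound $M^{(1)}_{Z_\epsilon}$ by retaining only the contribution of the inner $y$-integral. When $|\lambda|\le1/2$ the global bound $A(y)\le 2/\pi$ gives $A(y)^{-1}\ge\tilde\pi^2$, and combining this with $\int_0^\epsilon z^{-1/2}e^{-bz}\,dz=\sqrt{\pi/b}\,{\rm erf}(\sqrt{b\epsilon})$ yields $M^{(1)}_{Z_\epsilon}\gtrsim(\delta/\gamma)\,{\rm erf}(\gamma\sqrt\epsilon/\sqrt2)$, which explains the ${\rm erf}^{-3/2}$ prefactor. When $|\lambda|>1/2$ no global upper bound on $A$ is available near $0$, so I restrict the inner integral to $(z_0,\infty)$, where $A(y)\le H_0$; the tail Gaussian $\int_{z_0}^\infty e^{-zy^2/(2\delta^2)}\,dy=\delta\sqrt{\pi/(2z)}\,{\rm erfc}(z_0\sqrt z/(\delta\sqrt2))$, combined with the monotonicity of ${\rm erfc}$ (bounding ${\rm erfc}(z_0\sqrt z/(\delta\sqrt2))\ge{\rm erfc}(z_0\sqrt\epsilon/(\delta\sqrt2))$ for $z\le\epsilon$) and the same ${\rm erf}$ integral, yields the $({\rm erfc}\cdot{\rm erf})^{-3/2}$ prefactor.

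Assembling the numerator and denominator bounds gives the finite-$\epsilon$ estimate~(\ref{eq:GHlemmabound}). The asymptotics~(\ref{eq:lemasymptotic2}) then follow by inserting the expansions of $\Phi_\epsilon$ from~(\ref{series_hyper}), of $\gamma(s,b\epsilon)\sim(b\epsilon)^s/s$, and of ${\rm erf}(\gamma\sqrt\epsilon/\sqrt2)\sim\sqrt{2/\pi}\,\gamma\sqrt\epsilon$ (with ${\rm erfc}(\cdot)\to 1$): in both cases the $\gamma(1,b\epsilon)\sim b\epsilon$ term dominates, and dividing by the factor ${\rm erf}^{-3/2}\sim\epsilon^{-3/4}$ produces the common leading order $\epsilon^{1/4}$, with the stated constants. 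I expect the genuinely delicate step to be the rigorous justification of the monotonicity/asymptotic bounds on $A(y)=y|H_{|\lambda|}(y)|^2$ in each regime and the consistent choice of inequality direction; once these Hankel-function estimates are in hand, the remaining integrals are all elementary Gaussian and incomplete-gamma computations.
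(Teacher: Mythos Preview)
Your proposal is correct and follows essentially the same route as the paper: start from Theorem~\ref{theorem:RoC} with $\sigma_\epsilon^2\ge\sigma_W^2 M^{(1)}_{Z_\epsilon}$, then bound $M^{(3/2)}_{Z_\epsilon}$ above and $M^{(1)}_{Z_\epsilon}$ below using exactly the Hankel-function inequalities you describe (the paper packages these as bounds on the Jaeger integral $J(z)=\int_0^\infty A(y)^{-1}e^{-zy^2/(2\delta^2)}\,dy$ from Appendix~\ref{appendix:UpperLowerJaegerBounds} and~\cite{GIGLevy}, but the content is identical). The only cosmetic difference is that the paper first performs the $y$-integral to obtain $J(z)$ and then integrates in $z$, whereas you keep both integrals in play and apply Fubini; the splitting at $z_0$, the $\Gamma(|\lambda|)$ bound on $(0,z_0)$, the ${\rm erfc}$ factor from the monotonicity trick on $(z_0,\infty)$, and the final asymptotic expansion all match.
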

    \begin{proof}
    { Recalling the definition of the Jaeger integral as
    \[J(z)=\int_0^\infty\frac{e^{-\frac{x^2z}{2\delta^2}}}{x|H_{|\lambda|}(x)|^2}dx\]
    we have from the 
     bounds obtained in Appendix~\ref{appendix:UpperLowerJaegerBounds},} for $|\lambda|\leq 1/2$,
    $$     J(z) 
         \geq 
            {\delta{\left(\frac{\pi}{2}\right)}^{3/2}z^{-\frac{1}{2}}},
    $$
    and for $|\lambda|>1/2$:
         $$
         J(z)
         \geq
         \frac{\delta^{2|\lambda|}2^{|\lambda|-1}}{H_{0}z_{0}^{2|\lambda|-1}}z^{-|\lambda|}\gamma\left(|\lambda|, \frac{z_{0}^{2}}{2\delta^{2}}z\right)+\frac{\delta}{H_{0}\sqrt{2}}z^{-\frac{1}{2}}\Gamma\left(\frac{1}{2},\frac{z_{0}^{2}}{2\delta^{2}}z\right).
         $$
    Noting that the variance of the GH process satisfies \(\sigma_{\epsilon}^{2} = \mu_{W}^{2}M^{(2)}_{Z_{\epsilon}} + \sigma_{W}^{2}M^{(1)}_{Z_{\epsilon}} \geq \sigma_{W}^{2}M^{(1)}_{Z_{\epsilon}}\), for \(|\lambda|\leq \frac{1}{2}\)
    we obtain,
    \begin{align*}
        M^{(1)}_{Z_{\epsilon}} 
        &= \int_{0}^{\epsilon}\exp \left(-\frac{\gamma^{2}z}{2}\right)\left[\max(0,\lambda) + \frac{2}{\pi^{2}}J(z)\right]dz\\
        &\geq \frac{2}{\pi^{2}}\int_{0}^{\epsilon}\exp \left(-\frac{\gamma^{2}z}{2}\right)J(z)dz\\
        &\geq {\frac{\delta}{\gamma}{\rm erf}\left(\frac{\gamma\sqrt{\epsilon}}{\sqrt{2}}\right)},
    \end{align*}
    and similarly for \(|\lambda|>\frac{1}{2}\) we obtain,
    \begin{align*}
        M^{(1)}_{Z_{\epsilon}}
          &\geq \frac{2}{\pi^{2}}\int_{0}^{\epsilon}e^{-\frac{z\gamma^{2}}{2}}\left[\frac{\delta^{2|\lambda|}2^{|\lambda|-1}}{H_{0}z_{0}^{2|\lambda|-1}}z^{-|\lambda|}\gamma\left(|\lambda|, \frac{z_{0}^{2}}{2\delta^{2}}z\right)+\frac{\delta}{H_{0}\sqrt{2}}z^{-\frac{1}{2}}\Gamma\left(\frac{1}{2},\frac{z_{0}^{2}}{2\delta^{2}}z\right)\right]dz\nonumber\\
          &\geq \frac{2}{\pi^{2}}\int_{0}^{\epsilon}e^{-\frac{z\gamma^{2}}{2}}\frac{\delta}{H_{0}\sqrt{2}}z^{-\frac{1}{2}}\Gamma\left(\frac{1}{2},\frac{z_{0}^{2}}{2\delta^{2}}z\right)dz\\
          &\geq \frac{2\delta}{H_{0}\pi\gamma}{\rm erfc}\left(\frac{z_{0}}{\delta\sqrt{2}}\sqrt{\epsilon}\right){\rm erf}\left(\frac{\gamma\sqrt{\epsilon}}{\sqrt{2}}\right).
    \end{align*}
    Therefore, we have the following bound on the variance:
    \begin{align*}
        \sigma_{\epsilon}^{2} \geq 
        \begin{cases}
            {\frac{\sigma_{W}^{2}\delta}{\gamma}}{\rm erf}\left(\frac{\gamma\sqrt{\epsilon}}{\sqrt{2}}\right),
            &\mbox{for}\;|\lambda| \leq \frac{1}{2}\\
            \frac{\sigma_{W}^{2}\delta}{H_{0}\Tilde{\pi}^{2}\gamma}{\rm erfc}\left( \frac{z_{0}}{\delta\sqrt{2}}\sqrt{\epsilon}\right)
            {\rm erf}\left(\frac{\gamma\sqrt{\epsilon}}{\sqrt{2}}\right),
            &\mbox{for}\;|\lambda| > \frac{1}{2}.
        \end{cases}
    \end{align*}
    Finding \(M_{Z_{\epsilon}}^{\left(\frac{3}{2}\right)}\) in line with Theorem \ref{theorem:RoC},
    \begin{align*}
        M_{Z_{\epsilon}}^{\left(\frac{3}{2}\right)}
        &= \int_{0}^{\epsilon}z^{\frac{3}{2}}Q_{Z}(dz)\\
        &=\max(0,\lambda) \int_{0}^{\epsilon}z^{\frac{1}{2}}\exp\left(-\frac{\gamma^{2}}{2}z\right)dz +\frac{2}{\pi^{2}}\int_{0}^{\epsilon}z^{\frac{1}{2}}\exp\left(-\frac{\gamma^{2}}{2}z\right)J(z)dz \\
        &= \mathcal{S}_{1} + \mathcal{S}_{2}.
    \end{align*}
    Writing \(b = \frac{\gamma^{2}}{2}\), we have,
    $$
        \mathcal{S}_{1} = \frac{\max(0,\lambda)}{b\sqrt{b}}\gamma\left(\frac{3}{2}, b\epsilon\right),
    $$
    for \(|\lambda| > \frac{1}{2}\) we have \({J(z)\leq \delta{\left(\frac{\pi}{2}\right)}^{3/2}z^{-\frac{1}{2}}}\) and
    $$
        \mathcal{S}_{2}\leq \frac{2}{\pi^{2}}\delta \left(\frac{\pi}{2}\right)^{\frac{3}{2}}\int_{0}^{\epsilon}\exp\left(-bz\right)dz =  \frac{\delta}{\sqrt{2\pi} b}\gamma\left(1, b\epsilon\right),
    $$
    and for \(|\lambda| \leq \frac{1}{2}\):
    \begin{align*}
        {\cal S}_{2} &\leq \frac{2^{|\lambda|}\delta^{2|\lambda|}}{\pi^{2}H_{0}z_{0}^{2|\lambda|-1}}\int_{0}^{\epsilon}z^{\frac{1}{2}-|\lambda|}e^{-bz}\gamma\left(|\lambda|, \frac{z_{0}^{2}}{2\delta^{2}}z\right)dz + \frac{\sqrt{2}\delta}{\pi^{2}H_{0}}\int_{0}^{\epsilon}e^{-bz}\Gamma\left(\frac{1}{2}, \frac{z_{0}^{2}}{2\delta^{2}}z\right)dz\\
        &\leq \frac{2^{|\lambda|}\delta^{2|\lambda|}\Gamma(|\lambda|)}{\pi^{2}H_{0}z_{0}^{2|\lambda|-1}}\int_{0}^{\epsilon}z^{\frac{1}{2}-|\lambda|}e^{-bz}dz + \frac{\delta}{\pi\tilde{\pi}H_{0}}\int_{0}^{\epsilon}e^{-bz}dz\\
        &\leq \frac{2^{|\lambda|}\delta^{2|\lambda|}\Gamma(|\lambda|)}{\pi^{2}H_{0}z_{0}^{2|\lambda|-1}b^{\frac{3}{2}-|\lambda|}}\gamma\left(\frac{3}{2}-|\lambda|, b\epsilon\right) + \frac{\delta}{\pi\tilde{\pi}H_{0}b}\gamma\left(1, b\epsilon\right).
    \end{align*}
    Combining the above bounds, for $|\lambda|\leq 1/2$,
    $$
    M_{Z_{\epsilon}}^{\left(\frac{3}{2}\right)} \leq \frac{\max(0,\lambda)}{b\sqrt{b}}\gamma\left(\frac{3}{2}, b\epsilon\right) + \frac{2^{|\lambda|}\delta^{2|\lambda|}\Gamma(|\lambda|)}{\pi^{2}H_{0}z_{0}^{2|\lambda|-1}b^{\frac{3}{2}-|\lambda|}}\gamma\left(\frac{3}{2}-|\lambda|, b\epsilon\right) + \frac{\delta}{\pi\tilde{\pi}H_{0}b}\gamma\left(1, b\epsilon\right),
    $$
    and for $|\lambda|>1/2,$
    $$
       M_{Z_{\epsilon}}^{\left(\frac{3}{2}\right)}\leq \frac{\max(0,\lambda)}{b\sqrt{b}}\gamma\left(\frac{3}{2}, b\epsilon\right)+ {\frac{\delta}{\sqrt{2\pi}b}}\gamma\left(1, b\epsilon\right).
    $$
    Finally, substituting these bounds for $M_{Z_{\epsilon}}^{\left(\frac{3}{2}\right)}$ into (\ref{eq:NVM_RateOfConvergence}) we obtain the bounds as stated in (\ref{eq:GHlemmabound}). The series expansions of the gamma and hypergeometric functions then lead to the asymptotic expansion (\ref{eq:lemasymptotic2}).
    \end{proof}

    Once again, we examine the validity of the Gaussian approximation empirically. 
    Figures~\ref{fig:GHCLTVerification1} and~\ref{fig:GHCLTVerification2}
    show the empirical distribution of the residual GH process at time $t = 1$, with
    parameter values $\mu = 0$, $\mu_W =1$, $\sigma_W = 2$, $\delta=1.3$, $\gamma=\sqrt{2}$, $\lambda=0.2$, and truncation level
    $\epsilon=10^{-6}$.
    
    \begin{figure}[H]
        \begin{centering}
        \scalebox{0.55}{\includegraphics{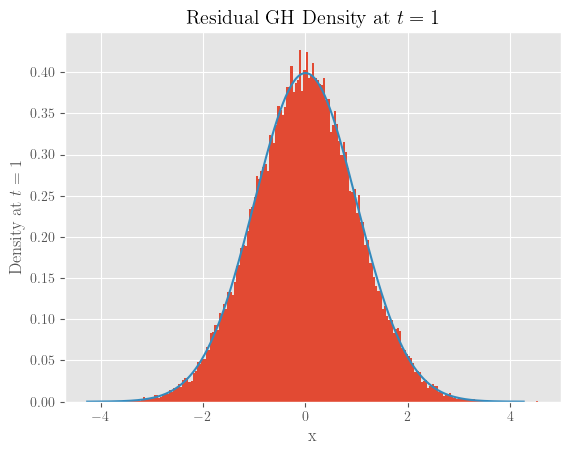}}
        \caption{Histogram of \(M = 5\times 10^4\) GH residual path values at \(t=1\). The blue curve represents the standard normal density.}
        \label{fig:GHCLTVerification1}
        \end{centering}	        
    \end{figure}
    \begin{figure}[H]
        \begin{centering}
        \scalebox{0.55}{\includegraphics{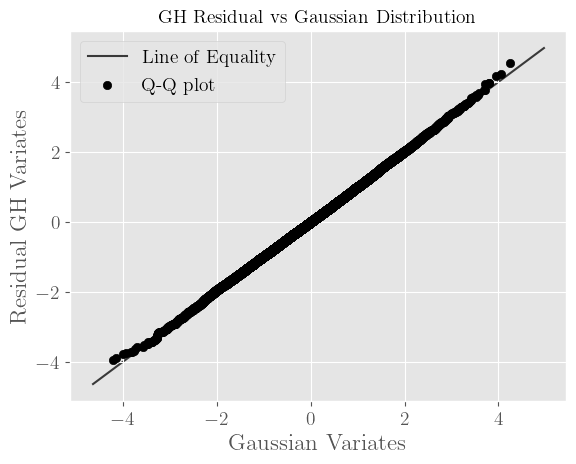}}
        \caption{Q-Q plot of \(M = 5\times 10^4\) GH residual path values at \(t=1\).}
        \label{fig:GHCLTVerification2}
		\end{centering}	        
    \end{figure}

    With the same parameter values, Figure~\ref{fig:GHConvergenceRate} shows behaviour of the bound in~(\ref{eq:GHlemmabound}) and the first term in the asymptotic bound~(\ref{eq:lemasymptotic2}).
     
     \begin{figure}[H]
        \begin{centering}
        \scalebox{0.55}{\includegraphics{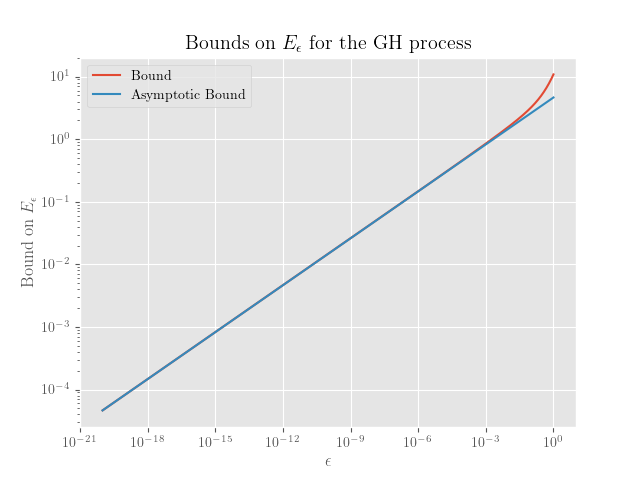}}
        \caption{Plot of the finite-$\epsilon$ bound in~(\ref{eq:GHlemmabound})
        and the first term in the asymptotic bound~(\ref{eq:lemasymptotic2}) 
        for the approximation error $E_\epsilon$ in Lemma~\ref{lemma:GHRoC}.} 
        \label{fig:GHConvergenceRate}
        \end{centering}
    \end{figure}
    
    Note that that bounds (\ref{eq:GHlemmabound}) and (\ref{eq:lemasymptotic2}) are discontinuous at \(|\lambda| = \frac{1}{2}\).
    This discrepancy is likely due to the upper bound for \(M_{Z_{\epsilon}}^{(1)}\) in the case \(|\lambda| > \frac{1}{2}\). 
    Although we do expect this could indeed be improved, obtaining such refined bounds is beyond the scope of this paper.  
    
\section{Linear SDEs}
The \textit{Lévy State Space Model}~\cite{LevyStateSpaceModel} defines a stochastic process having the following dynamics:
    $$
        d\boldsymbol{X}(t) = \boldsymbol{A}\boldsymbol{X}(t)dt + \boldsymbol{h}dW(t), \hspace{1cm} \boldsymbol{X}(t) \in \mathbbm{R}^{P}, W(t) \in \mathbbm{R}.%
    $$
where $\mbold{A}$ is a $P\times P$ matrix, $\mbold{h}\in\mathbbm{R}^P$. In~\cite{LevyStateSpaceModel} $(W(t))$ is assumed to follow a stable law; here it is taken to be an NVM L\'{e}vy process. The solution of the state process takes the form: 
    \begin{equation}
        \boldsymbol{X}(t) = e^{\mbold{A}(t-s)}\boldsymbol{X}(s) + \int_{s}^{t}e^{\mbold{A}(t-u)}\mbold{h}dW(u). \label{eq:SDE_solution}
    \end{equation}
We first present a shot-noise representation of the stochastic integral in (\ref{eq:SDE_solution}), and then prove the convergence of its small-jump residual to a Gaussian-driven SDE, under appropriate conditions. 

\subsection{Shot-noise representation of SDE \label{section:LevySSModel}}
    In order to apply the \textit{Lévy State Space Model} to NVM Lévy processes, we first establish their representation as generalised-shot noise series. Theorem \ref{theorem:StochIntegralSeries} gives the result for a general integrand:
    \begin{theorem}
    \label{theorem:StochIntegralSeries}
    Let $(X(u))$ be an NVM process generating the filtration \((\mathcal{F}_{t})\). Suppose
    \(\textit{\textbf{f}}_{t} : [0, \infty) \rightarrow \mathbbm{R}^{P}\) is an 
    \(L_{2}\) deterministic function, and let $\mbold{I}({\textbf{f}_{t}})$ denote the integral
    $$\mbold{I}({\textbf{f}_{t}}) =  \int_{0}^{T}\textit{\textbf{f}}_{t}(u)dX(u), \qquad 0\leq t\leq T.
    $$
    Then \(\mbold{I}({\textbf{f}_{t}})\) admits the series representation,
    \begin{equation*}
         \mbold{I}({\textbf{f}_{t}}) = \sum_{i=1}^{\infty}X_i\textit{\textbf{f}}_{t}(TV_{i})\mathbbm{1} \left( V_{i} \leq \frac{t}{T}\right),
    \end{equation*}
    where,
    \begin{align*}
    X_i=\mu_{W} Z_i+\sigma_{W}\sqrt{Z_i}U_{i},
    \end{align*}
  and \(\mu_{W} \in \mathbbm{R}, \ \sigma_{W} \in (0,\infty)\) are the variance-mean mixture parameters, \(V_{i} \overset{iid}{\sim} \mathcal{U}[0,1]\) are normalised jump times, $Z_i$ are the jumps of the subordinator process, {arranged in non-increasing order},
  and $U_i \overset{iid}{\sim} \mathcal{N}(0, 1)$.
    \end{theorem}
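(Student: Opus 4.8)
The plan is to reduce the stochastic integral to a sum over the jumps of $(X(u))$ by means of the generalised shot-noise representation of Section~\ref{section:BackgroundGeneralisedShotNoise}. Since the NVM process requires no compensating term, its jump (Poisson random) measure $N$ on $[0,T]\times\mathbbm{R}^*$ has mean measure $du\,Q(dx)$, so that $X(u)=\int_0^u\int_{\mathbbm{R}^*}x\,N(ds,dx)$ holds pathwise as an absolutely convergent jump sum; by~(\ref{eq:GenShotNoisePointprocess}) and~(\ref{eq:JumpFunctionNormalVarianceMean}) this measure may be written as $N=\sum_i\delta_{(TV_i,\,X_i)}$ with $X_i=\mu_{W}Z_i+\sigma_{W}\sqrt{Z_i}\,U_i$, jump times $TV_i$ (so that $V_i\sim\mathcal{U}[0,1]$), and $U_i\overset{iid}{\sim}\mathcal{N}(0,1)$. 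For a deterministic integrand I would define $\mbold{I}(\textbf{f}_t)=\int_0^T\!\int_{\mathbbm{R}^*}\textbf{f}_t(u)\,x\,N(du,dx)$; evaluating this against the atoms of $N$ whose jump times satisfy $TV_i\le t$ then produces the claimed series $\sum_i X_i\,\textbf{f}_t(TV_i)\,\mathbbm{1}(V_i\le t/T)$.

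It is cleanest to carry out the computation by conditioning on the subordinator, which also exposes the conditionally Gaussian structure exploited later. Writing $X(u)=\mu_{W}Z(u)+\sigma_{W}B(Z(u))$ and splitting linearly, $\mbold{I}(\textbf{f}_t)=\mu_{W}\int_0^T\textbf{f}_t\,dZ+\sigma_{W}\int_0^T\textbf{f}_t\,dB(Z)$. The first term is an ordinary Lebesgue--Stieltjes integral against the increasing pure-jump process $Z$, equal to $\mu_{W}\sum_i\textbf{f}_t(TV_i)\,Z_i\,\mathbbm{1}(V_i\le t/T)$. For the second, I would use that a driftless subordinator is constant between its jumps, so that $B(Z(u))$ changes only at the times $TV_i$; conditionally on $\{(V_i,Z_i)\}$ the associated increments are independent $\mathcal{N}(0,Z_i)$ variables, i.e.\ $\sqrt{Z_i}\,U_i$ with $U_i\overset{iid}{\sim}\mathcal{N}(0,1)$, yielding $\sigma_{W}\sum_i\textbf{f}_t(TV_i)\sqrt{Z_i}\,U_i\,\mathbbm{1}(V_i\le t/T)$. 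Summing the two contributions recovers $\sum_i X_i\,\textbf{f}_t(TV_i)\,\mathbbm{1}(V_i\le t/T)$.

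To make this rigorous I would first establish the identity for simple (step) integrands, where both $\mbold{I}(\textbf{f}_t)$ and the series reduce to finite sums and coincide term by term, and then extend to arbitrary $\textbf{f}_t\in L_2$ by continuity. The analytic input is the Wiener--Lévy isometry: for the centred process, $\mathbbm{E}[\,\|\int_0^T\textbf{f}_t\,d\tilde{X}\|^2\,]=\mathrm{Var}(X(1))\int_0^T\|\textbf{f}_t(u)\|^2\,du<\infty$, so that $\textbf{f}_t\mapsto\mbold{I}(\textbf{f}_t)$ extends continuously to $L_2$. The matching bound for the series follows from Campbell's theorem applied to the Poisson process $\{(TV_i,Z_i)\}$ with mean measure $du\,Q_Z(dz)$: one computes $\mathbbm{E}[\sum_i\textbf{f}_t(TV_i)^2 Z_i]=\|\textbf{f}_t\|_{L_2}^2\,\mathbbm{E}[Z(1)]$, with an analogous finite bound for the drift part, so that the partial sums are Cauchy in $L_2$ and converge to the same limit as the integral.

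The main obstacle I anticipate is the rigorous identification of the $L_2$-defined stochastic integral with the almost surely convergent shot-noise series in the infinite-activity regime: one must control the infinitely many small jumps and justify that the conditional (given $Z$) Gaussian identity for $\int\textbf{f}_t\,dB(Z)$ lifts to an unconditional $L_2$ equality, rather than merely matching first and second moments. Securing the isometry on both representations and invoking density of simple functions in $L_2$ is precisely what legitimises this passage to the limit; the per-component computations are then routine.
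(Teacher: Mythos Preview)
Your decomposition $\mbold{I}(\textbf{f}_t)=\mu_{W}\int_0^T\textbf{f}_t\,dZ+\sigma_{W}\int_0^T\textbf{f}_t\,dB(Z)$ is exactly the one the paper uses: it writes $X(u)=\mu_W M(u)+\sigma_W S(u)$ with $M$ the subordinator and $S$ the symmetric (type-G) mixture process, and then treats the two stochastic integrals separately. The difference is in how each piece is handled. Rather than building the two series representations from scratch via simple functions and an $L_2$ isometry, the paper simply invokes two existing results: \cite[Corollary~8.2]{LevyInFinance} for the a.s.\ shot-noise representation of $\int\textbf{f}_t\,dM$ against a subordinator, and \cite[Section~4]{GenShotNoiseStochIntegral} for the a.s.\ representation of $\int\textbf{f}_t\,dS$ against a type-G process. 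The proof is then a two-line assembly of quotations.

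Your route is perfectly sound and arguably more self-contained, but note two points. First, your heuristic ``$B(Z(u))$ changes only at the times $TV_i$'' is awkward in the infinite-activity regime (the jump times are dense), so you would still need an approximation argument; that is precisely what your simple-function/$L_2$ limit supplies, but the sentence as written is not a proof. Second, your $L_2$ extension tacitly requires $\mathbbm{E}[Z(1)]<\infty$ (and $\mathbbm{Var}[Z(1)]<\infty$ for the drift part), whereas the cited Barndorff-Nielsen and Rosi\'nski results deliver a.s.\ convergence of the series without moment hypotheses. So the paper's citation-based proof is both shorter and nominally more general; yours has the virtue of being explicit about the mechanism and of exposing the conditionally Gaussian structure that the later SDE results rely on.
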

    \begin{proof}
    Arguing as in~\cite[Section 7]{GeneralisedShotNoise}, we can extend~(\ref{eq:JumpFunctionNormalVarianceMean}) to any NVM process defined on $t\in[0, T]$, with $\tilde{V}_i = TV_i\sim\mathcal{U}[0,T]$, to obtain, for all $u\in[0,T]$,
    \begin{align*}
        X(u) &= \sum_{i=1}^{\infty}\left[\mu_{W}Z_{i}+\sigma_{W}\sqrt{Z_{i}}U_{i}\right]\mathbbm{1}(\tilde{V}_{i}\leq u)\\
        &= \mu_{W}\sum_{i=1}^{\infty}Z_{i}\mathbbm{1}(\tilde{V}_{i}\leq u) + \sigma_{W}\sum_{i=1}^{\infty}\sqrt{Z_{i}}U_{i}\mathbbm{1}(\tilde{V}_{i}\leq u)\\
        &= \mu_{W}M(u) + \sigma_{W}S(u),
    \end{align*}
    where \(M(u)\) is a subordinator L\'evy process and \(S(u)\) a symmetric Gaussian mixture process. 
    Therefore,
    \begin{equation*}
        dX(u) = \mu_{W}dM(u) + \sigma_{W}dS(u),
    \end{equation*}
    and hence we obtain the following representation for \(\mbold{I}({\textit{\textbf{f}}_{t}})\):
    \begin{equation*}
       \mbold{I}({\textit{\textbf{f}}_{t}}) = \mu_{W}\int_{0}^{T}\textit{\textbf{f}}_{t}(u)dM(u) + \sigma_{W}\int_{0}^{T}\textit{\textbf{f}}_{t}(u)dS(u).
    \end{equation*}
    From~\cite[Corollary 8.2]{LevyInFinance}, a stochastic integral with respect to a Lévy subordinator admits
    the a.s.\ generalised-shot noise representation,
    \begin{equation*}
        \int_{0}^{T}\textit{\textbf{f}}_{t}(u)dM(u) = \sum_{i=1}^{\infty}Z_{i}\textit{\textbf{f}}_{t}(\tilde{V}_{i})\mathbbm{1}(\tilde{V}_{i} \leq t).
    \end{equation*}
    Similarly, Rosinski proves in~\cite[Section~4]{GenShotNoiseStochIntegral} the a.s.\ representation of stochastic integrals with respect to Lévy processes of type G. {These are symmetric normal variance mixture processes such as the symmetric Student-t, symmetric $\alpha$-stable and Laplace which are special cases of NVM L\'{e}vy processes having $\mu_W=0$}, in the form:
    \begin{equation*}
        \int_{0}^{T}\textit{\textbf{f}}_{t}(u)dS(u) = \sum_{i=1}^{\infty}\sqrt{Z_{i}}U_{i}\textit{\textbf{f}}_{t}(\tilde{V}_{i})\mathbbm{1}(\tilde{V}_{i} \leq t).
    \end{equation*}
    Combining the last three expressions proves the claimed result. 
    \end{proof}
    Applying the result of the theorem to $X(t)$ in~(\ref{eq:SDE_solution}), with,
    $\textit{\textbf{f}}_{t}(u) = e^{\mbold{A}(t-u)}\mbold{h}\mathbbm{1}(s\leq u \leq t),$
    yields:
    $$
       \mbold{I}\left(\textit{\textbf{f}}_{t}\right) = \sum_{i=1}^{\infty}\left[\mu_{W}Z_{i}+\sigma_{W}\sqrt{Z_{i}}U_{i}\right]e^{\mbold{A}(t-\tilde{V}_{i})}\mbold{h}{\mathbbm{1}(\tilde{V}_i\in (s,t ])}
    $$
{ with $\Tilde{V}_{i} \overset{iid}{\sim} \mathcal{U}(0,T]$ as before.}
   
    \section{Convergence of residual SDE to a Gaussian SDE}\label{section:SDEConvergence}
    In this section we prove that the residual series of the truncated shot noise representation of the SDE  
    in the previous section, converges to 
    Brownian motion-driven SDE, as the truncation level $\epsilon\downarrow 0$. 
    Employing as before random Poisson truncations of the subordinator jumps $Z_i$, we can write,
    \begin{equation*}
        \boldsymbol{X}(t) = e^{\mbold{A}(t-s)}\boldsymbol{X}(s) + \boldsymbol{Z}_{\epsilon}(s,t) + \boldsymbol{R}_{\epsilon}(s,t),
    \end{equation*}
    where $\boldsymbol{I(f_{t})} = \boldsymbol{Z}_{\epsilon}(s,t) + \boldsymbol{R}_{\epsilon}(s,t)$, with,
    \begin{align*}
        &\boldsymbol{Z}_{\epsilon}(s,t) = \sum_{i: Z_{i} > \epsilon}\left[\mu_{W}Z_{i}+\sigma_{W}\sqrt{Z_{i}}U_{i}\right]e^{\mbold{A}(t-\tilde{V}_{i})}\mbold{h}{\mathbbm{1}(\tilde{V}_i\in (s,t ])},\\
        &\boldsymbol{R}_{\epsilon}(s,t) = \sum_{i: Z_{i} \leq \epsilon}\left[\mu_{W}Z_{i}+\sigma_{W}\sqrt{Z_{i}}U_{i}\right]e^{\mbold{A}(t-\tilde{V}_{i})}\mbold{h}{\mathbbm{1}(\tilde{V}_i\in (s,t ])},
    \end{align*}
    and \(\Tilde{V}_{i} \sim {\mathcal{U}(0,T]}  \). 
    Here,
    $\boldsymbol{R}_{\epsilon}(s,t)$ is the 
    residual series driven by small jumps $Z_i < \epsilon$, which will be approximated by a Brownian-driven SDE with matched moments. 
    Theorem \ref{theorem:NVMConvergenceGaussian} and the results in Section~\ref{section:ExampleCases} are the starting
    point of the proof of this approximation.
    
    We first present a Lemma concerning NVM L\'{e}vy processes.
    \begin{lemma}
    \label{lemma:FournierNVM}
    Let \((Y_{\epsilon}(t))\) denote the standardised residual NVM L\'{e}vy process as defined as in Theorem \ref{theorem:NVMConvergenceGaussian}, and let \((B(t))\) be a standard Brownian motion. Then there exists a coupling \((Y_{\epsilon}(t),B(t))\) such that, under the conditions of Theorem~\ref{theorem:NVMConvergenceGaussian}, we have,
    \begin{align*}
        \mathbbm{E} \left[\sup_{t\in[0,T]}\left|Y_{\epsilon}(t)-B(t)\right|^{2}\right] \leq C_{T}\max\{S^{1/2}_{\epsilon}, S^{1/3}_{\epsilon}\},
    \end{align*}
    where \(C_{T}\) is a constant independent of \(\epsilon\) and:
        $$
        S_{\epsilon} = \mu_{W}^{4}\frac{M_{Z_{\epsilon}}^{(4)}}{\sigma_{\epsilon}^{4}}+6\mu_{W}^{2}\sigma_{W}^{2}\frac{M_{Z_{\epsilon}}^{(3)}}{\sigma_{\epsilon}^{4}}+ 3\sigma_{W}^{4}\frac{M_{Z_{\epsilon}}^{(2)}}{\sigma_{\epsilon}^{4}}.
        $$
    \end{lemma}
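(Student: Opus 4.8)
The plan is to reduce the statement to a general coupling theorem for centered Lévy processes and Brownian motion—of the type established by Fournier—and then to verify its single hypothesis, namely the finiteness and exact value of the fourth moment of the driving Lévy measure, by a direct computation specific to the NVM structure. The key observation is that the standardised process $(Y_\epsilon(t))$ is itself a Lévy process: it is obtained from $(X_\epsilon(t))$ by the deterministic affine map $Y_\epsilon(t)=(X_\epsilon(t)-\mathbbm{E}[X_\epsilon(t)])/\sigma_\epsilon$, so it is centered, has unit variance at $t=1$ by the definition of $\sigma_\epsilon$ in~(\ref{sigma_eps}), and its Lévy measure $Q_{Y_\epsilon}$ is the pushforward of $Q_{X_\epsilon}$ in~(\ref{eq:Q_X_eps}) under the scaling $x\mapsto x/\sigma_\epsilon$. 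Since subtracting the linear-in-$t$ mean alters only the drift coefficient of the Lévy--Khintchine triplet and not the Lévy measure, this scaling relation is exact.

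First I would compute the fourth moment of $Q_{Y_\epsilon}$ and identify it with $S_\epsilon$. Writing each jump as $H(z,U)=\mu_{W}z+\sigma_{W}\sqrt{z}\,U$ with $U\sim\mathcal{N}(0,1)$ as in~(\ref{eq:JumpFunctionNormalVarianceMean}), I expand $(\mu_{W}z+\sigma_{W}\sqrt{z}\,U)^4$ and take the Gaussian expectation, using $\mathbbm{E}[U]=\mathbbm{E}[U^3]=0$, $\mathbbm{E}[U^2]=1$, and $\mathbbm{E}[U^4]=3$, which yields $\mu_{W}^4 z^4+6\mu_{W}^2\sigma_{W}^2 z^3+3\sigma_{W}^4 z^2$. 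Integrating against $Q_Z$ over $(0,\epsilon]$ by Fubini, exactly as in the third-moment computation in the proof of Theorem~\ref{theorem:RoC}, gives $\int_{\mathbbm{R}}x^4 Q_{X_\epsilon}(dx)=\mu_{W}^4 M^{(4)}_{Z_\epsilon}+6\mu_{W}^2\sigma_{W}^2 M^{(3)}_{Z_\epsilon}+3\sigma_{W}^4 M^{(2)}_{Z_\epsilon}$, each term being finite by~(\ref{eq:Z_moments}) and~(\ref{M_condition}). Dividing by $\sigma_\epsilon^4$, the fourth-power scaling induced by standardisation, gives $\int_{\mathbbm{R}}y^4 Q_{Y_\epsilon}(dy)=S_\epsilon$.

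Next I would invoke Fournier's coupling theorem: for a centered Lévy process with unit variance at $t=1$ whose Lévy measure has finite fourth moment $m_4$, there exists a probability space carrying both the process and a standard Brownian motion $(B(t))$ such that $\mathbbm{E}[\sup_{t\in[0,T]}|Y_\epsilon(t)-B(t)|^2]\le C_T\max\{m_4^{1/2},m_4^{1/3}\}$, with $C_T$ depending only on $T$. Applying this with $m_4=S_\epsilon$ yields the claimed bound. For completeness I would record that under condition~(\ref{eq:NecessaryAndSufficient}) the right-hand side vanishes: by~(\ref{M_condition}) one has $M^{(3)}_{Z_\epsilon}\le\epsilon M^{(2)}_{Z_\epsilon}$ and $M^{(4)}_{Z_\epsilon}\le\epsilon^2 M^{(2)}_{Z_\epsilon}$, while $\sigma_\epsilon^4\ge\sigma_{W}^4 (M^{(1)}_{Z_\epsilon})^2$ by~(\ref{eq:sig_sq_bound}), so each summand of $S_\epsilon$ is dominated by a constant multiple of $M^{(2)}_{Z_\epsilon}/(M^{(1)}_{Z_\epsilon})^2\to 0$.

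The routine part is the moment computation and the scaling bookkeeping; the substantive input is the coupling theorem itself, whose proof—the construction of the joint law achieving the $L^2$ bound uniformly over $[0,T]$—is the genuinely hard analytic step and is precisely what I would import from Fournier rather than re-derive. The only point requiring care in the application is matching conventions: ensuring the process is genuinely centered with variance normalised to one at $t=1$, and that the fourth moment in Fournier's hypothesis coincides with $S_\epsilon$ rather than an absolute-moment or per-unit-time variant, which the computation above settles.
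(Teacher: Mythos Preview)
Your proposal is correct and follows essentially the same route as the paper: both identify $(Y_\epsilon(t))$ as a centered, unit-variance L\'evy process with no Gaussian part, invoke Fournier's coupling theorem (Theorem~3.1 in~\cite{Fournier2009}) to obtain the bound $C_T\max\{m_4^{1/2},m_4^{1/3}\}$, and then compute $m_4(Q_{Y_\epsilon})$ explicitly via the NVM mixture structure---expanding the fourth moment of a $\mathcal{N}(\mu_W z,\sigma_W^2 z)$ variable and integrating against $Q_Z$ over $(0,\epsilon]$---to identify it with $S_\epsilon$. Your additional remark that $S_\epsilon\to 0$ under condition~(\ref{eq:NecessaryAndSufficient}) is not part of the lemma as stated but is exactly what is used downstream in the proof of Theorem~\ref{theorem:StochIntegralConvergence}.
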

    \begin{proof}
       The process \((Y_{\epsilon}(t))\) is square-integrable, with drift parameter and diffusion coefficient $a=b=0$. Write $Q_X$ for its Lévy measure, and define:
       $$
            m_{n}(Q_{X}):= \int_{\mathbbm{R}^{*}}|x|^{n}Q_{X}(dx), \quad n\geq 1.
       $$ 
             The corresponding subordinator has Lévy measure $Q_Z$, satisfying \(Q_{Z}(\{z > \epsilon\}) = 0\).
             Note that \(m_{2}(Q_{X}) = 1\). Following Theorem \ref{theorem:NVMConvergenceGaussian}, the subordinator has no drift or diffusion component, so neither does \((Y_{\epsilon}(t))\). 
            From ~\cite[Theorem 3.1]{Fournier2009}, if \(m_{4}(Q_{X}) < \infty\), we have directly, 
            \begin{align*}
                \mathbbm{E} \left[\sup_{[0,T]}\left|Y_{\epsilon}(t)-B(t)\right|^{2}\right] \leq C_{T}\max\{m_{4}(Q_{X})^{1/2}, m_{4}(Q_{X})^{1/3}\},
            \end{align*}
            where \(C_{T}\) depends on \(T\) only. 
            Finally, writing \(\sigma_{\epsilon}^{2}\) for the variance of the NVM residual process, \(m_{4}(Q_{X})\) can be expresses as:
            \begin{align}
                m_{4}(Q_{X}) &= \int_{-\infty}^{\infty}(x/\sigma_{\epsilon})^{4}\int_{0}^{\infty}\mathcal{N}(dx;\mu_{W}z, \sigma_{W}^{2}z)Q_{Z}(dz)\nonumber\\
                &= \frac{1}{\sigma_{\epsilon}^{4}}\int_{0}^{\infty}Q_{Z}(dz)\int_{-\infty}^{\infty}x^{4}\mathcal{N}(dx;\mu_{W}z, \sigma_{W}^{2}z)dx\nonumber\\
                &= \frac{1}{\sigma_{\epsilon}^{4}}\int_{0}^{\infty}\left[\mu_{W}^{4}z^{4}+6\mu_{W}^{2}\sigma_{W}^{2}z^{3} + 3\sigma_{W}^{4}z^{2}\right]Q_{Z}(dz)\nonumber\\
                &= \mu_{W}^{4}\frac{M_{Z_{\epsilon}}^{(4)}}{\sigma_{\epsilon}^{4}}+6\mu_{W}^{2}\sigma_{W}^{2}\frac{M_{Z_{\epsilon}}^{(3)}}{\sigma_{\epsilon}^{4}}+ 3\sigma_{W}^{4}\frac{M_{Z_{\epsilon}}^{(2)}}{\sigma_{\epsilon}^{4}}. \nonumber
            \end{align}
        Substituting this into the earlier bound yields the required result.
    \end{proof}
    
    \begin{theorem}
    \label{theorem:StochIntegralConvergence}
    Let \((X_{\epsilon}(t))\) denote the residual NVM L\'evy process as in Theorem~\ref{theorem:NVMConvergenceGaussian}, and write \((\mathcal{F}_{t})\) for the filtration it generates. Let \(\textit{\textbf{f}}_{t}(u)\) denote some \(\mathcal{F}_{t}\)-previsible, \(L_{2}\)-integrable function, and define the process \((\mbold{R}_{\epsilon}(t))\) via:
    \begin{align*}
        d\mbold{R}_{\epsilon}(u) = \textit{\textbf{f}}_{t}(u)dX_{\epsilon}(u).
    \end{align*}
    If $(X_{\epsilon}(t))$ satisfies the conditions of Theorem~\ref{theorem:NVMConvergenceGaussian}, then
    \((\mbold{Y}_{\epsilon}(t))\),
    the \(\epsilon\)-normalised version of \((\mbold{R}_{\epsilon}(t))\),  satisfies:
    \begin{align*}
        \lim_{\epsilon\rightarrow 0}\mathbb{E} \left[\sup_{t\in[0,T]}\left\|\mbold{Y}_{\epsilon}(t)-\mbold{B}(t)\right\|_{1}^{2}\right] = 0.
    \end{align*}
{ where,  centering and normalising \((X_{\epsilon}(t))\) to obtain \((Y_{\epsilon}(t))\) as before, we define
        \begin{align*}
             \mbold{Y}_{\epsilon}(t) = \int_{0}^{t}\textit{\textbf{f}}_{t}(u)dY_{\epsilon}(u),
        \end{align*}
        and also a moment matched Gaussian process $(\mbold{B}(t))$ driven by standard Brownian motion $B(t)$ that can be coupled with $(Y_{\epsilon}(t))$ in the sense of Lemma \ref{lemma:FournierNVM},
        \begin{align*}
            \mbold{B}(t) = \int_{0}^{t}\textit{\textbf{f}}_{t}(u)dB(u)\,.
        \end{align*}
       }

    In particular, for each fixed time $t$, \(\mbold{Y}_{\epsilon}(t)\) converges in distribution to the Gaussian law:
    \begin{equation*}
        \mathcal{N} \left(\int_{0}^{t}\textit{\textbf{f}}_{t}(u)du,\int_{0}^{t}\textit{\textbf{f}}_{t}(u)\textit{\textbf{f}}_{t}(u)^{T}du\right).
    \end{equation*}
    \end{theorem}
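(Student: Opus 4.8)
The plan is to leverage the strong coupling furnished by Lemma~\ref{lemma:FournierNVM}, which already controls the scalar processes $(Y_{\epsilon}(t))$ and $(B(t))$ in the supremum $L^{2}$ sense, and then to transfer this control to the stochastic integrals $\mbold{Y}_{\epsilon}$ and $\mbold{B}$ by an integration-by-parts argument. Before doing so I would first verify that the quantity $S_{\epsilon}$ appearing in Lemma~\ref{lemma:FournierNVM} vanishes under the hypothesis~(\ref{eq:NecessaryAndSufficient}) of Theorem~\ref{theorem:NVMConvergenceGaussian}. Using the variance lower bound $\sigma_{\epsilon}^{2}\geq \sigma_{W}^{2}M^{(1)}_{Z_{\epsilon}}$ from~(\ref{eq:sig_sq_bound}) together with the moment inequalities $M^{(n)}_{Z_{\epsilon}}\leq \epsilon^{\,n-2}M^{(2)}_{Z_{\epsilon}}$ from~(\ref{M_condition}), each of the three terms of $S_{\epsilon}$ is bounded by a constant times $\epsilon^{k}M^{(2)}_{Z_{\epsilon}}/(M^{(1)}_{Z_{\epsilon}})^{2}$ for some $k\in\{0,1,2\}$. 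Since $M^{(2)}_{Z_{\epsilon}}/(M^{(1)}_{Z_{\epsilon}})^{2}\to 0$ by~(\ref{eq:NecessaryAndSufficient}) and $\epsilon\to 0$, this gives $S_{\epsilon}\to 0$, and hence Lemma~\ref{lemma:FournierNVM} yields $\mathbbm{E}\big[\sup_{t\in[0,T]}|Y_{\epsilon}(t)-B(t)|^{2}\big]\to 0$ under the stated coupling.

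Next I would transfer this bound to the integrated processes. Writing $G_{\epsilon}(u)=Y_{\epsilon}(u)-B(u)$, which satisfies $G_{\epsilon}(0)=0$, and using that $\textit{\textbf{f}}_{t}(\cdot)$ is deterministic and of bounded variation in $u$ (as it is for the SDE integrand $\textit{\textbf{f}}_{t}(u)=e^{\mbold{A}(t-u)}\mbold{h}\mathbbm{1}(s\leq u\leq t)$), the Stieltjes integration-by-parts identity gives, for each fixed $t\in[0,T]$,
\[
\mbold{Y}_{\epsilon}(t)-\mbold{B}(t)=\int_{0}^{t}\textit{\textbf{f}}_{t}(u)\,dG_{\epsilon}(u)=\textit{\textbf{f}}_{t}(t)G_{\epsilon}(t)-\int_{0}^{t}G_{\epsilon}(u^{-})\,d_{u}\textit{\textbf{f}}_{t}(u).
\]
Taking the $1$-norm, bounding $|G_{\epsilon}(u)|$ by $\|G_{\epsilon}\|_{\infty}:=\sup_{u\in[0,T]}|G_{\epsilon}(u)|$, and then taking the supremum over $t$ yields
\[
\sup_{t\in[0,T]}\|\mbold{Y}_{\epsilon}(t)-\mbold{B}(t)\|_{1}\leq \|G_{\epsilon}\|_{\infty}\Big(\sup_{t\in[0,T]}\|\textit{\textbf{f}}_{t}(t)\|_{1}+\sup_{t\in[0,T]}\mathrm{TV}_{[0,t]}(\textit{\textbf{f}}_{t})\Big)=:\kappa_{\textbf{f}}\,\|G_{\epsilon}\|_{\infty},
\]
where $\kappa_{\textbf{f}}<\infty$ because $\textit{\textbf{f}}_{t}$ is bounded with uniformly bounded total variation in $u$ on the compact interval $[0,T]$. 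Squaring, taking expectations, and invoking the coupling bound from the first step gives $\mathbbm{E}\big[\sup_{t\in[0,T]}\|\mbold{Y}_{\epsilon}(t)-\mbold{B}(t)\|_{1}^{2}\big]\leq \kappa_{\textbf{f}}^{2}\,\mathbbm{E}\big[\|G_{\epsilon}\|_{\infty}^{2}\big]\to 0$, which is the claimed convergence.

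The main obstacle is the integration-by-parts step together with the accompanying regularity bookkeeping: the coupling of Lemma~\ref{lemma:FournierNVM} does \emph{not} make $G_{\epsilon}$ a martingale, so one cannot simply apply an It\^o isometry or a Burkholder--Davis--Gundy inequality to the difference; instead one must pass through the pathwise Stieltjes identity, which forces the uniform total-variation control $\kappa_{\textbf{f}}<\infty$ on the integrand. Because $\textit{\textbf{f}}_{t}(u)$ depends on the upper limit $t$, care is needed when taking the supremum over $t$ inside the estimate; for the exponential integrand this is transparent, since $e^{\mbold{A}(t-u)}=e^{\mbold{A}t}e^{-\mbold{A}u}$ factors the $t$-dependence out as a bounded matrix and reduces $\mbold{Y}_{\epsilon}(t)$ to $e^{\mbold{A}t}$ times an integral with a fixed integrand, but for a general $L_{2}$ integrand one must additionally impose (or verify) this uniform bounded-variation property.

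Finally, for the fixed-time conclusion I would note that $L^{2}$ convergence at a fixed $t$ implies convergence in distribution, and that $\mbold{B}(t)=\int_{0}^{t}\textit{\textbf{f}}_{t}(u)\,dB(u)$ is a Wiener integral of a deterministic function, hence exactly Gaussian, with covariance $\int_{0}^{t}\textit{\textbf{f}}_{t}(u)\textit{\textbf{f}}_{t}(u)^{T}\,du$ by It\^o's isometry. Together with the first-moment matching built into the coupling, this identifies the limiting marginal law as the normal distribution asserted in the theorem.
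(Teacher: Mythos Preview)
Your proposal follows the same overall strategy as the paper: both reduce the claim to the scalar coupling of Lemma~\ref{lemma:FournierNVM} and then transfer the resulting sup-$L^{2}$ control on $Y_{\epsilon}-B$ to the integrated processes $\mbold{Y}_{\epsilon}-\mbold{B}$. The difference is in how that transfer is executed. The paper asserts directly that $\bigl\|\int_{0}^{t}\textit{\textbf{f}}_{t}(u)\,dT(u)\bigr\|_{1}\leq \sup_{u\in[0,t]}\|\textit{\textbf{f}}_{t}(u)\|_{1}\,|T(t)|$ with $T=Y_{\epsilon}-B$, and then takes suprema and applies Lemma~\ref{lemma:FournierNVM}; your integration-by-parts route instead produces the bound $\kappa_{\textbf{f}}\,\|G_{\epsilon}\|_{\infty}$ with an additional total-variation constant, at the cost of a BV hypothesis on $\textit{\textbf{f}}_{t}$. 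Your version is the more careful of the two: the paper's pointwise inequality is not obvious for a stochastic integrator of unbounded variation (here $T$ contains a Brownian component), whereas the Stieltjes identity you invoke is standard for deterministic BV integrands against semimartingales. You are also more explicit than the paper in checking that $S_{\epsilon}\to 0$ follows from~(\ref{eq:NecessaryAndSufficient}) via~(\ref{M_condition}) and~(\ref{eq:sig_sq_bound}); the paper simply asserts this. Finally, your observation that the BV requirement is extra regularity beyond the stated $L_{2}$ assumption is apt---the paper's bound implicitly needs something similar but does not flag it, and in both arguments it is really the smooth exponential integrand $e^{\mbold{A}(t-u)}\mbold{h}$ of the L\'evy state-space model that is being targeted.
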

    \begin{proof}
        Consider the SDE,
        \begin{align*}
            d\mbold{R}_{\epsilon}(u) = \textit{\textbf{f}}_{t}(u)dX_{\epsilon}(u),
        \end{align*}
        with solution given by,
        \begin{align*}
            \mbold{R}_{\epsilon}(t) = \int_{0}^{t}\textit{\textbf{f}}_{t}(u)dX_{\epsilon}(u).
        \end{align*}
        Since \(\left\|\textit{\textbf{f}}_{t}\right\|_{L_{2}}<\infty\) and \((X_{\epsilon}(t))\) is a semi-martingale, 
        its quadratic variation is well-defined; see Appendix~\ref{appendix:AppendixSDEMoments}. Therefore,
        we can compute the mean and variance of \(\mbold{R}_{\epsilon}(t)\) as,
        $$
            \mathbbm{E}\left[\mbold{R}_{\epsilon}(t)\right] = \mu_{W}M_{Z_{\epsilon}}^{(1)}\int_{0}^{t}\textit{\textbf{f}}_{t}(u)du,\quad
            \mathbbm{Var}\left[\mbold{R}_{\epsilon}(t)\right] = \left[\mu_{W}^{2}M_{Z_{\epsilon}}^{(2)}+\sigma_{W}^{2}M^{(1)}_{Z_{\epsilon}}\right]\int_{0}^{t}\textit{\textbf{f}}_{t}(u)\textit{\textbf{f}}_{t}(u)^{T}du. 
        $$
        Centering and normalising \((X_{\epsilon}(t))\) to obtain \((Y_{\epsilon}(t))\) as before, we now consider the process,
        \begin{align*}
             \mbold{Y}_{\epsilon}(t) = \int_{0}^{t}\textit{\textbf{f}}_{t}(u)dY_{\epsilon}(u),
        \end{align*}
        along with a matching process driven by standard Brownian motion,
        \begin{align*}
            \mbold{B}(t) = \int_{0}^{t}\textit{\textbf{f}}_{t}(u)dB(u),
        \end{align*}
        where,
        \begin{equation*}
            \expec{\mbold{B}(t)} = \int_{0}^{t}\textit{\textbf{f}}_{t}(u)du, \hspace{1.6cm} \mathbbm{Var}\left[\mbold{B}(t)\right] = \int_{0}^{t}\textit{\textbf{f}}_{t}(u)\textit{\textbf{f}}_{t}(u)^{T}du.
        \end{equation*}
        Letting \(T(t) = Y_{\epsilon}(t)-B(t)\), we have,
        $$
            \left\|\mbold{Y}_{\epsilon}(t)-\mbold{B}(t)\right\|_{1} = \left\|\int_{0}^{t}\textit{\textbf{f}}_{t}(u)dT(u)\right\|_{1} 
            \leq \sup_{u\in[0,t]}||\textit{\textbf{f}}_{t}(u)||_{1}\left|T(t)\right|
            = \sup_{u\in[0,t]}||\textit{\textbf{f}}_{t}(u)||_{1}|Y_{\epsilon}(t)-B(t)|,
        $$
        so that,
        \begin{align*}
             \sup_{t\in[0,T]}\left\|\mbold{Y}_{\epsilon}(t)-\mbold{B}(t)\right\|_{1}^{2} \leq \sup_{t\in[0,T]}\sup_{u\in[0,t]}||\textit{\textbf{f}}_{t}(u)||_{1}^{2}\sup_{t\in[0,T]}|Y_{\epsilon}(t)-B(t)|^{2},
        \end{align*}
{and hence, applying Lemma \ref{lemma:FournierNVM}, there exist coupled processes $(Y_{\epsilon}(t))$  and $(B(t))$ such that:}
        \begin{align*}
            \mathbb{E} \left[\sup_{t\in[0,T]}\left\|\mbold{Y}_{\epsilon}(t)-\mbold{B}(t)\right\|_{1}^{2}\right]&\leq \sup_{t\in[0,T]}\sup_{u\in[0,t]}||\textit{\textbf{f}}_{t}(u)||_{1}^{2}\mathbbm{E}\left[\sup_{t\in[0,T]}|Y_{\epsilon}(t)-B(t)|^{2}\right]\\
            &\leq C_{T}\sup_{t\in[0,T]}\sup_{u\in[0,t]}\left\|\textit{\textbf{f}}_{t}(u)\right\|_{1}^{2}\max\{S_{\epsilon}^{1/2}, S_{\epsilon}^{1/3}\}.
        \end{align*}
        By our assumptions on $\textit{\textbf{f}}_{t}(u), u \in [0, t]$, and under condition~(\ref{eq:NecessaryAndSufficient}) of Theorem~\ref{theorem:NVMConvergenceGaussian},
        we have \(\lim_{\epsilon \rightarrow 0} S_{\epsilon} = 0\), and since \(C_{T}\) is independent of \(\epsilon\),
        \begin{align*}
            \lim_{\epsilon\rightarrow 0}\mathbb{E} \left[\sup_{t\in[0,T]}\left\|\mbold{Y}_{\epsilon}(t)-\mbold{B}(t)\right\|_{1}^{2}\right] = 0.
        \end{align*}
        completing the proof.
        \end{proof}

    Building on the results in Sections~\ref{section:NGProof},~\ref{section:NTSProof}, and~\ref{section:GHProof}, Theorem~\ref{theorem:StochIntegralConvergence} proves the Gaussian representation of \(\boldsymbol{R}_{\epsilon}(t)\) when
    the underlying process is known to converge to a diffusion.
    
    In Sections~\ref{section:NTSProof} and~\ref{section:GHProof} we justified the Gaussian convergence of the NTS and the GH process residuals, respectively, and therefore, Theorem~\ref{theorem:StochIntegralConvergence} justifies the Gaussian representation of \((\boldsymbol{R}_{\epsilon}(t))\). While an exact expression for \(M^{(1)}_{Z_{\epsilon}}\) exists in the NTS case, we can only provide small-argument bounds for the GIG case; see Appendix~\ref{appendix:NVMMoments}.
    
    In Section~\ref{section:NGProof} we showed that the NG residual process fails to converge to Brownian motion as \(\epsilon\to 0\). Therefore, there is no mathematically justified reason to model \(\boldsymbol{R}_{\epsilon}(t)\) as a diffusion process there. While the large concentration of residual jumps near \(0\) shown in Figure~\ref{fig:NGCLTVerification1} suggests it might be reasonable to set \(\boldsymbol{R}_{\epsilon}(t) = 0\), we could alternatively model the residual process via its mean, such that \(\boldsymbol{\hat{R}}_{\epsilon}(t) = \mathbbm{E}[\boldsymbol{\hat{R}}_{\epsilon}(t)]\), as suggested in \cite{SmallJumps}.
    
\section{Conclusions}
    In this work, new theoretical properties of NVM processes were established, motivated by 
    the desire to investigate the application of some of the Bayesian inference algorithms introduced in~\cite{LevyStateSpaceModel}
    to state space models driven by more general classes of non-Gaussian noise. We identified natural sufficient conditions 
    that guarantee the Gaussian convergence of the error process associated with an NVM process subjected to random Poisson truncations of their shot-noise series representations. We also showed that this Gaussian convergence does not always occur, and provided sufficient conditions
    for cases when it fails. 
    
    Moreover, under the same Gaussian-convergence conditions, we established the process-level convergence of a family of associated stochastic integrals, thus justifying the Gaussian representation of the residuals of these integrals.
    In Section~\ref{section:ExampleCases} we showed that Brownian motion with drift subordinated to the residual processes of a TS or a GIG-type
    L\'{e}vy process converges to a Wiener process. Furthermore, in Section~\ref{section:LevySSModel} we established the validity
    of the Gaussian representation of the residual of the stochastic integral with respect to the NTS and GH processes (excluding the Normal-gamma edge case). 
    
    Subordination to a Gamma process is shown not to converge to a Gaussian limit. Therefore,  the residuals of stochastic integrals with respect to NG processes cannot be represented by Gaussians. One alternative direction would be to investigate whether fitting a Gaussian to the residual would still yield improved accuracy in Bayesian inference procedures such as particle filtering. A more interesting possibility would be to explore the distribution to which the NG residual converges, as, for any \(\epsilon = h(\tilde{\epsilon}) > 0\), in Section~\ref{section:NGProof} we showed that the residual is non-zero and heavier-tailed than the Gaussian.
    
    Finally, we note that the current analysis applies to one-dimensional Lévy processes and the corresponding linear SDEs. An extension of the methodology to multivariate Lévy processes would allow generalising the `Lévy State Space Model'~\cite{LevyStateSpaceModel} and the 
    associated methodological tools to more general state-space models, importantly allowing for more sophisticated  modelling of spatial dependencies in high dimensional data.

\vspace*{0.3in}

\appendix

\centerline{\Large\bf Appendix}
\section{Upper and lower bounds on Jaeger integrals\label{appendix:UpperLowerJaegerBounds}}
 {Define first the Jaeger integral \cite{Freitas2018} as parameterised in \cite{GIGLevy} (Section 3):
\[J(z)=\int_0^\infty\frac{e^{-\frac{x^2z}{2\delta^2}}}{x|H_{|\lambda|}(x)|^2}dx.\]}
The integrand in the GIG Lévy measure in~(\ref{eq:GIGMeasure}) depends on the value of \(|\lambda|\)~\cite{GIGLevy}. We first consider the region \(|\lambda| \leq \frac{1}{2}\). From~\cite[Theorem~3]{GIGLevy}, a suitable upper bound on \(\left[{z|H_{|\lambda|}(z)|^{2}}\right]^{-1}\) for \(|\lambda| \leq \frac{1}{2}\) is given by,
    \begin{equation*}
        \frac{1}{z|H_{|\lambda|}(z)|^{2}} \leq \frac{1}{B(z)},
    \end{equation*}
    where,
    \begin{equation*}
        \frac{1}{B(z)} = 
        \begin{cases}
        \frac{1}{H_{0}}\left(\frac{z}{z_{0}}\right)^{2|\lambda|-1}, \ & z < z_{0},\\
        \frac{1}{H_{0}}, \ & z \geq z_{0},
        \end{cases}
    \end{equation*}
    with $z_{0} \in (0, \infty)$ and $H_{0} = z_{0}|H_{|\lambda|}(z_{0})|^{2}$. This leads to the upper bound:
    \begin{align*}
        J(z) &\leq \frac{1}{H_{0}z_{0}^{2|\lambda|-1}}\int_{0}^{z_{0}}y^{2|\lambda|-1} \exp \left(-\frac{zy^{2}}{2\delta^{2}}\right)dy + \frac{1}{H_{0}}\int_{z_{0}}^{\infty}\exp \left(-\frac{zy^{2}}{2\delta^{2}}\right)dy\\
        &=\frac{1}{H_{0}z_{0}^{2|\lambda|-1}}\left[\frac{\delta^{2|\lambda|}2^{|\lambda|-1}}{z^{|\lambda|}}\gamma\left(|\lambda|, \frac{z_{0}^{2}}{2\delta^{2}}z\right)\right]+\frac{1}{H_{0}}\left[\frac{\delta}{\sqrt{2}}z^{-\frac{1}{2}}\Gamma\left(\frac{1}{2},\frac{z_{0}^{2}}{2\delta^{2}}z\right)\right].
    \end{align*}
    Recall the series expansion for $\gamma(s,x)$ from Section~\ref{section:NGProof} and also that
    for $\Gamma(s,x)$ we have~\cite{UpperIncomplete}, when \(|\lambda| \neq 0\):
        $$\Gamma(s,x) = \Gamma(s) - \sum_{n=0}^{\infty}(-1)^{n}\frac{x^{s+n}}{n!(s+n)}.
        $$
    Combining these with the previous bound:
    \begin{align}
        J(z) 
        &\leq 
        \frac{1}{H_{0}z_{0}^{2|\lambda|-1}}\left\{\frac{z_{0}^{2|\lambda|}}{2}\left[\frac{1}{|\lambda|}-\frac{z_{0}^{2}}{2\delta^{2}}\frac{1}{|\lambda|+1}z+\mathcal{O}(z^{2})\right]\right\}\nonumber\\
        &\quad
        +\frac{\delta}{H_0\sqrt{2}}\left\{\sqrt{\pi} z^{-\frac{1}{2}} - \frac{2z_{0}z^{\frac{1}{2}}}{\delta\sqrt{2}}z^{-\frac{1}{2}}+\frac{z_{0}^{3}z^{\frac{3}{2}}}{3\delta^{3}\sqrt{2}}z^{-\frac{1}{2}} +\mathcal{O}(z^{2}) \right\}\nonumber\\
        &= 
        \frac{1}{H_{0}z_{0}^{2|\lambda|-1}}\left\{\frac{z_{0}^{2|\lambda|}}{2}\left[\frac{1}{|\lambda|}+\mathcal{O}(z)\right]\right\}+\frac{\delta}{H_{0}}\sqrt{\frac{\pi}{2}}z^{-\frac{1}{2}} - \frac{z_{0}}{H_{0}}+\mathcal{O}(z)\nonumber\\
        &= 
        \frac{\delta}{H_{0}}\sqrt{\frac{\pi}{2}}z^{-\frac{1}{2}} + \frac{z_{0}}{H_{0}}\left(\frac{1}{2|\lambda|}-1\right) + \mathcal{O}(z).\nonumber
    \end{align}
    Finally, we refer to~\cite[Theorem~1]{GIGLevy} for a lower bound on the Jaeger integral for \(|\lambda|\leq \frac{1}{2}\):
    \begin{equation*}
        J(z) = \int_{0}^{\infty}\frac{1}{y|H_{|\lambda|}(y)|^{2}}\exp \Big(-\frac{zy^{2}}{2\delta^{2}}\Big)dy \geq \frac{\pi}{2}\int_{0}^{\infty}\exp \Big(-\frac{zy^{2}}{2\delta^{2}}\Big)dy
        = \frac{\delta\pi}{2}\sqrt{\frac{\pi}{2}}z^{-\frac{1}{2}}.
    \end{equation*}
    
    When \(|\lambda| > \frac{1}{2}\), \(y|H_{|\lambda|}(y)|^{2}\) is non-increasing rather than non-decreasing,
    and the relevant bounds become:
    \begin{align}
        \int_{0}^{\infty}\frac{1}{y|H_{|\lambda|}(y)|^{2}}\exp \Big(-\frac{zy^{2}}{2\delta^{2}}\Big)dy \leq \frac{\pi}{2}\int_{0}^{\infty}\exp \Big(-\frac{zy^{2}}{2\delta^{2}}\Big)dy &= \frac{\delta\pi}{2}\sqrt{\frac{\pi}{2}}z^{-\frac{1}{2}}. \label{Eq:bound_J}
    \end{align}
    From \cite[Theorem~3]{GIGLevy}, a suitable lower bound on $z|H_{|\lambda|}(z)|^{2}]^{-1}$ for \(|\lambda| \geq  \frac{1}{2}\) is given by, 
    \begin{equation*}
        \frac{1}{z|H_{|\lambda|}(z)|^{2}} \geq \frac{1}{B(z)},
    \end{equation*}
    where \(B(z)\) is defined as above, from which we deduce that for small \(z \in [0, \epsilon]\) and \(|\lambda| \leq \frac{1}{2}\):
    \begin{equation*}
        \delta \left(\frac{\pi}{2}\right)^{\frac{3}{2}}z^{-\frac{1}{2}} \leq J(z) \leq \frac{\delta}{H_{0}}\left(\frac{\pi}{2}\right)^{\frac{1}{2}}z^{-\frac{1}{2}} + \frac{z_{0}}{H_{0}}\left(\frac{1}{2|\lambda|}-1\right) +\mathcal{O}(z).
    \end{equation*}
    And similarly, if \(|\lambda|\geq \frac{1}{2}\):
    \begin{equation*}
        \frac{\delta}{H_{0}}\left(\frac{\pi}{2}\right)^{\frac{1}{2}}z^{-\frac{1}{2}} + \frac{z_{0}}{H_{0}}\left(\frac{1}{2|\lambda|}-1\right) +\mathcal{O}(z) \leq J(z)  \leq \delta \left(\frac{\pi}{2}\right)^{\frac{3}{2}}z^{-\frac{1}{2}}.
    \end{equation*}

    \section{Derivation of SDE moments\label{appendix:AppendixSDEMoments}}
    Recall the definition of \(\mbold{R}_{\epsilon}(t)\) as: 
    \begin{align*}
        \mbold{R}_{\epsilon}(t) = \int_{0}^{t}\textit{\textbf{f}}_{t}(u)dX^{\epsilon}_{u}.
    \end{align*}
    Since \((X_{t}^{\epsilon})\) is a semi-martingale, it is clear that \(\Tilde{X}_{t}^{\epsilon} = X_{t}^{\epsilon} - t\mu_{W}M_{Z_{\epsilon}}^{(1)}\) is a martingale, and:
    \begin{align*}
        \mbold{R}_{\epsilon}(t) = \mu_{W}M_{Z_{\epsilon}}^{(1)}\int_{0}^{t}\textit{\textbf{f}}_{t}(u)du + \int_{0}^{t}\textit{\textbf{f}}_{t}(u)d\Tilde{X}^{\epsilon}_{u},
    \end{align*}
    Then, clearly, we have,
    $$
        \expec{\mbold{R}_{\epsilon}(t)} = \mu_{W}M_{Z_{\epsilon}}^{(1)}\int_{0}^{t}\textit{\textbf{f}}_{t}(u)du,
    $$
    and the It\^{o} isometry yields,
    \begin{align*}
        \mathbbm{Var}\left[\mbold{R}_{\epsilon}(t)\right] = \expec{\left(\int_{0}^{t}\textit{\textbf{f}}_{t}(u)d\Tilde{X}^{\epsilon}_{u}\right)\left(\int_{0}^{t}\textit{\textbf{f}}_{t}(u)d\Tilde{X}^{\epsilon}_{u}\right)^{T}} = 
        \expec{\int_{0}^{t}\textit{\textbf{f}}_{t}(u)\textit{\textbf{f}}_{t}(u)^{T}d\big[\Tilde{X}^{\epsilon}\big]_{u}},
    \end{align*}
    where \(\big[\Tilde{X}^{\epsilon}\big]_{t}\) is the quadratic variation of the compensated Lévy process, with expectation~\cite{FinancialModellingCont}:
    \begin{align*}
        \expec{d\left[\Tilde{X}^{\epsilon}\right]_{t}}= \expec{\int_{\mathbbm{R} \backslash \{0\}}x^{2}{N}(dt,dx)} = \left[\mu_{W}^{2}M_{Z_{\epsilon}}^{(2)}+\sigma_{W}^{2}M_{Z_{\epsilon}}^{(1)}\right]dt.
    \end{align*}
    Finally, the corresponding expression for the variance of \(\mbold{R}_{\epsilon}(t)\) is also easily obtained as:
    $$
        \mathbbm{Var}\left[\mbold{R}_{\epsilon}(t)\right] = \left[\mu_{W}^{2}M_{Z_{\epsilon}}^{(2)}+\sigma_{W}^{2}M_{Z_{\epsilon}}^{(1)}\right]\int_{0}^{t}\textit{\textbf{f}}_{t}(u)\textit{\textbf{f}}_{t}(u)^{T}du.
    $$
    \section{Centering for NVM processes}
\label{app:subordinator} 
The following lemma is probably known but, as we could not easily locate a specific reference, we provide
a proof for completeness. Recall the convergent generalised shot-noise representation of a L\'{e}vy process
    from Section~\ref{section:BackgroundGeneralisedShotNoise}:
        \begin{equation*}
            X(t) = \sum_{i=1}^{\infty}H(Z_i, U_{i})\mathbbm{1}(V_{i} \leq t) -tb_{i}, \ 0 \leq t \leq T
        \end{equation*}
\begin{lemma}
For an NVM L\'{e}vy process $(X(t))$, the following generalised shot noise representation converges a.s.:
        \begin{equation*}
            X(t) = \sum_{i=1}^{\infty}H(Z_i, U_{i})\mathbbm{1}(V_{i} \leq t), \quad {0\leq t\leq T}.
        \end{equation*}
\end{lemma}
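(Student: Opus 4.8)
The plan is to reduce the claim to the a.s.\ convergence of the two component series obtained from the linear decomposition already exploited in the proof of Theorem~\ref{theorem:StochIntegralSeries}:
$$X(t)=\mu_{W}M(t)+\sigma_{W}S(t),\qquad M(t)=\sum_{i=1}^{\infty}Z_{i}\mathbbm{1}(V_{i}\le t),\qquad S(t)=\sum_{i=1}^{\infty}\sqrt{Z_{i}}\,U_{i}\,\mathbbm{1}(V_{i}\le t),$$
where $(M(t))$ is the driving subordinator and $(S(t))$ is a symmetric Gaussian mixture process. Since an a.s.\ convergent series remains a.s.\ convergent under finite linear combination, it suffices to treat $(M(t))$ and $(S(t))$ separately. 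Throughout I would condition on the $\sigma$-algebra $\mathcal{G}=\sigma(\{(Z_{i},V_{i})\}_{i\ge 1})$ generated by the ordered jumps and their arrival times, so that the only remaining randomness in $S$ is carried by the i.i.d.\ normals $U_{i}$.

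For the subordinator term, I would use that $\{(V_{i},Z_{i})\}$ is a Poisson random measure on $[0,T]\times(0,\infty)$ with intensity $T^{-1}\,\mathrm{Leb}\times Q_{Z}$, whence $M(t)=\int_{[0,t]\times(0,\infty)}z\,N(ds,dz)$. This integral of non-negative jumps is a.s.\ finite exactly because the subordinator measure satisfies~(\ref{eq:SubordinatorRequirement}), i.e.\ $\int_{(0,\infty)}(1\wedge z)Q_{Z}(dz)<\infty$; alternatively one invokes~\cite[Corollary~8.2]{LevyInFinance}. In particular $M(T)=\sum_{i}Z_{i}\mathbbm{1}(V_{i}\le T)<\infty$ a.s., and $(M(t))$ needs no centering.

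The symmetric part is the crux. Conditionally on $\mathcal{G}$, the partial sums $S_{N}(t)=\sum_{i\le N}\sqrt{Z_{i}}\,U_{i}\,\mathbbm{1}(V_{i}\le t)$ are sums of independent mean-zero Gaussians, hence a martingale in $N$ with conditional variance $\sum_{i\le N}Z_{i}\mathbbm{1}(V_{i}\le t)\le M(T)<\infty$ a.s.\ by the previous step. The $L^{2}$ martingale convergence theorem then yields $S_{N}(t)\to S(t)$ a.s.\ (and in $L^{2}$) conditionally on $\mathcal{G}$, and integrating out $\mathcal{G}$ gives unconditional a.s.\ convergence for each fixed $t$; the upgrade to a.s.\ convergence of the whole path on $[0,T]$ follows from the It\^o--Nisio theorem for sums of independent random elements, or directly from Rosinski's type-G representation~\cite{GenShotNoiseStochIntegral}. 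Combining the two parts shows the full series converges a.s., justifying $b_{i}=0$.

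The hard part, and the reason the decomposition is needed, is that one cannot prove this by absolute convergence: the criterion $\int_{\mathbbm{R}^{*}}(1\wedge|x|)Q(dx)<\infty$ does \emph{not} hold for every NVM process — indeed for the normal tempered stable subordinator with $\kappa\ge\tfrac12$ one has $g(z):=\int(1\wedge|x|)\,\mathcal{N}(dx;\mu_{W}z,\sigma_{W}^{2}z)\sim c\sqrt{z}$ as $z\to0$, so that $\int_{0}^{1}g(z)Q_{Z}(dz)$ diverges. What rescues the argument is the symmetry of the Gaussian jumps, which makes $(S(t))$ a centred conditional $L^{2}$ martingale requiring only conditional (not absolute) convergence, with the finite conditional variance $M(T)$ supplied precisely by the subordinator step.
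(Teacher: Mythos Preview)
Your approach is correct and takes a genuinely different route from the paper. The paper attempts to establish the finite-variation criterion $\int_{\mathbbm{R}^{*}}(1\wedge|x|)\,Q_{X}(dx)<\infty$ directly (equation~(\ref{eq:VMG_condition}) in Appendix~\ref{app:subordinator}) and then invokes Rosinski's convergence theorem~\cite[Theorem~4.1]{GeneralisedShotNoise}. Your decomposition $X=\mu_{W}M+\sigma_{W}S$ instead handles the subordinator series $M$ by positivity and~(\ref{eq:SubordinatorRequirement}), and the symmetric part $S$ as a conditional $L^{2}$-bounded Gaussian martingale with variance controlled by $M(T)<\infty$. Crucially, your final paragraph identifies a real obstruction to the paper's strategy: the finite-variation condition is \emph{false} for NVM processes whose subordinator has sufficiently heavy small-jump activity --- your NTS counterexample with $\kappa\ge\tfrac{1}{2}$ is correct (the Normal Inverse Gaussian case $\kappa=\tfrac{1}{2}$ is well known to have infinite-variation paths), since $g(z)\sim c\sqrt{z}$ forces $\int_{0}^{1}g(z)z^{-1-\kappa}\,dz=\infty$. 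The paper's bounding argument in Appendix~\ref{app:subordinator} goes wrong at the case split: for small $|x|$ the maximiser $z^{*}=x^{2}/(3\sigma_{W}^{2})$ of $z^{-3/2}\exp(-x^{2}/(2\sigma_{W}^{2}z))$ lies inside $(0,1]$, so the correct uniform bound is of order $|x|^{-3}$, not the bounded quantity $\exp(-x^{2}/(3\sigma_{W}^{2}))$; once corrected, the outer $x$-integral acquires an $|x|^{-2}$ singularity at the origin and diverges. Your conditional-martingale argument for $S(t)$ is therefore not merely an alternative but the right repair, and the appeal to It\^o--Nisio or~\cite{GenShotNoiseStochIntegral} for the uniform-in-$t$ statement is appropriate.
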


\begin{proof}
Recall the form of the Lévy measure of NVM processes from~(\ref{eq:GeneralNormalVarianceMeanMeasure2}).
    An NVM process need not be compensated, and hence we may take $b_i=0$ for all $i$ in the shot noise representation, provided,
    \begin{equation}
        \int_{-\infty}^{\infty}\left(1\wedge|x|\right)Q_{X}(dx) = \int_{-\infty}^{\infty}\left(1\wedge|x|\right)\int_{0}^{\infty}\mathcal{N}\left(x;\mu_{W}z,\sigma_{W}^{2}z\right)Q_{Z}(dz)dx < \infty.\label{eq:VMG_condition}
    \end{equation}
    Since by the L\'{e}vy measure definition $\int_{-\infty}^{\infty}\left(1\wedge x^{2}\right)Q_{X}(dx)$ is finite, we must also have that 
    $\int_{\left\{|x|>1\right\}}Q_{X}(dx)$ is finite. So, concentrating on the interval $|x|\leq1$:
    \begin{align}
        I &:= \int_{|x|\leq1}|x|\int_{0}^{\infty}\mathcal{N}\left(x;\mu_{W}z,\sigma_{W}^{2}z\right)Q_{Z}(dz)dx\nonumber\\
        &=\int_{|x|\leq1}\frac{|x|}{\sqrt{2\pi\sigma_{W}^{2}}}\int_{0}^{\infty}z^{-\frac{1}{2}}\exp\left[-\frac{1}{2\sigma_{W}^{2}z}\left(x-\mu_{W}z\right)^{2}\right]Q_{Z}(dz)dx\nonumber\\
        &= \int_{|x|\leq1}\frac{|x|}{\sqrt{2\pi\sigma_{W}^{2}}}\exp\left(\frac{x\mu_{W}}{\sigma_{W}^{2}}\right)\int_{0}^{\infty}z^{-\frac{1}{2}}\exp\left(-\frac{x^{2}}{2\sigma_{W}^{2}}z^{-1}-\frac{\mu_{W}^{2}}{2\sigma_{W}^{2}}z\right)Q_{Z}(dz)dx\nonumber \\
        &\leq \int_{|x|\leq1}\frac{|x|}{\sqrt{2\pi\sigma_{W}^{2}}}\exp\left(\frac{\mu_{W}}{\sigma_{W}^{2}}\right)\int_{0}^{\infty}z^{-\frac{1}{2}}\exp\left(-\frac{x^{2}}{2\sigma_{W}^{2}}z^{-1}\right)Q_{Z}(dz)dx.\label{eq:IntegralSmallJumpsCompensation}
    \end{align}
Note that the unimodal function $z^{-\frac{3}{2}}\exp\{-x^2/(2z\sigma_{W}^{2})\}$ achieves its maximum at $z=\frac{x^{2}}{3\sigma_{W}^{2}}$. Hence, the inner integrand may be bounded, for all $z\in (0,1]$ by,
\begin{align*}
    z^{-\frac{1}{2}}\exp\left(-\frac{x^{2}}{2\sigma_{W}^{2}}z^{-1}\right)\leq 
    \begin{cases} 
    z \frac{|x|^{-3}}{(\sqrt{3}\sigma_{W})^3}e^{-3/2},& |x|>3\sigma_W^2,\\
    z\exp(-\frac{x^{2}}{3\sigma_{W}^{2}}),&|x|\leq{3\sigma_{W}^{2}},
    \end{cases} 
\end{align*}
with the $|x|\leq{3\sigma_{W}^{2}}$ case corresponding to the supremum lying to the right of $z=1$. 
Therefore, the inner integral in~(\ref{eq:IntegralSmallJumpsCompensation}) over $z\in(0,1)$ can be bounded, 
using~(\ref{eq:SubordinatorRequirement}), by,
\begin{align*}
\int_{0}^{1}z^{-\frac{1}{2}}\exp\left(-\frac{x^{2}}{2\sigma_{W}^{2}}z^{-1}\right)Q_{Z}(dz)dx\leq \begin{cases}C\frac{|x|^{-3}}{(\sqrt{3}\sigma_{W})^3}e^{-3/2},& |x|>3\sigma_W^2\\
C\exp(-\frac{x^{2}}{3\sigma_{W}^{2}}),&|x|\leq{3\sigma_{W}^{2}},\end{cases}
\end{align*}
where $C=\int_0^1zQ_Z(dz)<\infty$ is a constant that does not depend on $x$ or $\epsilon$. 
Moreover, using~(\ref{eq:SubordinatorRequirement}) again, we obtain,
\begin{align*}
    \int_{1}^{\infty}z^{-\frac{1}{2}}\exp\left(-\frac{x^{2}}{2\sigma_{W}^{2}}z^{-1}\right)Q_{Z}(dz)\leq \int_{1}^{\infty}Q_{Z}(dz)=C'<\infty.
\end{align*}
Combining the above bounds yields,
\begin{align*}        
I \;\leq\;
        \frac{1}{\sqrt{2\pi\sigma_{W}^{2}}}\exp\left(\frac{\mu_{W}}{\sigma_{W}^{2}}\right)
        &
            \left[\int_{\left\{|x|\leq (3\sigma_W^2\wedge 1)\right\}}{|x|}C\exp\Big(-\frac{x^{2}}{3\sigma_{W}^{2}}\Big)dx\right.\\
        &
            \left.\;\;+\int_{\left\{(3\sigma_W^2\wedge 1)<  |x|\leq 1\right\}}{|x|}C\frac{|x|^{-3}}{\{\sqrt{3}\sigma_{W}\}^3}e^{-3/2}dx\right.\\
        & 
            \left.\;\;+\int_{\left\{|x|\geq 1\right\}}{|x|}C'dx\right]<\infty,
 \end{align*}
establishing~(\ref{eq:VMG_condition}) and confirming that compensation is not required for NVM L\'{e}vy processes.

Given that $I$ is finite, the result of the lemma will follow from~\cite[Theorem~4.1]{GeneralisedShotNoise}, once 
we establish that, with, 
\[
A(s):=\int_0^s \int_{|x|\leq 1}x\sigma(r;dx)dr=\int_0^s \int_{|x|\leq 1}x{\cal{N}}(x;h(r)\mu_W,h(r)\sigma_W^2)dxdr, \quad s>0,
\]
the limit $a:=\lim_{s\to\infty} A(s)$ exists and is finite.
In the definition of $A(s)$, the term
 $\sigma(\cdot;\cdot)$ denotes the kernel,
 $\sigma(h(r);F)={\mathbbm P}(H(h(r),U)\in F)$ for all measurable $F$,
 which in the present setting is a collection of Gaussian measures.
 Since $r=Q_Z([z,+\infty ))$, we have $dr=-Q_Z(dz)$, and hence:
\begin{align*}
    A(s)=\int_0^s \int_{|x|\leq 1}x{\cal{N}}\left(x;h(r)\mu_W,h(r)\sigma_W^2\right)dxdr=\int_{h(s)}^\infty \int_{|x|\leq 1}x{\cal{N}}\left(x;z\mu_W,z\sigma_W^2\right)dxQ_Z(dz).
\end{align*}
Since we already showed that, 
\begin{align*}
    \int_{|x|\leq1}|x|\int_{0}^{\infty}\mathcal{N}\left(x;\mu_{W}z,\sigma_{W}^{2}z\right)Q_{Z}(dz)dx=I<\infty,
\end{align*}
we can exchange the order of integration, and since $h(s)\to 0$ as $s\to\infty$ by definition,
it follows that the limit $a:=\lim_{s\to\infty} A(s)$ exists.
Finally, we have,
\begin{align*}
    \left|\lim_{s\rightarrow \infty}A(s)\right|&=\left|\int_{|x|\leq 1}x\int_{0}^{\infty}\mathcal{N}\left(x;\mu_{W}z,\sigma_{W}^{2}z\right)Q_{Z}(dz)dx\right|\\&\leq \int_{|x|\leq1}|x|\int_{0}^{\infty}\mathcal{N}\left(x;\mu_{W}z,\sigma_{W}^{2}z\right)Q_{Z}(dz)dx <\infty,
\end{align*}
showing that $a$ is finite and concluding the proof.
\end{proof}
    \section{Moments for example processes}\label{appendix:NVMMoments}
    Recalling the discussion at the end of Section~\ref{section:SDEConvergence} in connection with the `Lévy State Space 
    model'~\cite{LevyStateSpaceModel}, we provide here expressions for $M_{Z_{\epsilon}}^{(1)}$ and $M_{Z_{\epsilon}}^{(2)}$ for our example processes.

    For the NG process in Section \ref{section:NGProof}, we have,
    \begin{align*}
        M_{Z_{\epsilon}}^{(1)} = \int_{0}^{\epsilon}z\nu z^{-1}\exp\left(-\frac{1}{2}\gamma^{2}z\right)dz
        = \frac{2\nu}{\gamma^{2}}\gamma\left(1, \frac{1}{2}\gamma^{2}\epsilon\right),
    \end{align*}
    and,
    \begin{align*}
        M_{Z_{\epsilon}}^{(2)} = \int_{0}^{\epsilon}z^{2}\nu z^{-1}\exp\left(-\frac{1}{2}\gamma^{2}z\right)dz
        =\frac{4\nu}{\gamma^{4}}\gamma\left(2, \frac{1}{2}\gamma^{2}\epsilon\right).
    \end{align*}
    Similarly, for the NTS process in Section \ref{section:NTSProof},
    \begin{align*}
        M_{Z_{\epsilon}}^{(1)} = \int_{0}^{\epsilon}zAz^{-1-\kappa}\exp\left(-\frac{1}{2}\gamma^{\frac{1}{\kappa}}z\right)dz
        = A \gamma^{\frac{\kappa-1}{\kappa}}2^{1-\kappa}\gamma\left(1-\kappa,\frac{1}{2}\epsilon\gamma^{\frac{1}{\kappa}}\right),
    \end{align*}
    and,
    \begin{align*}
        M_{Z_{\epsilon}}^{(2)} = \int_{0}^{\epsilon}z^{2}Az^{-1-\kappa}\exp\left(-\frac{1}{2}\gamma^{\frac{1}{\kappa}}z\right)dz
        =A\gamma^{\frac{\kappa-2}{\kappa}}2^{2-\kappa}\gamma\left(2-\kappa,\frac{1}{2}\epsilon\gamma^{\frac{1}{\kappa}}\right).
    \end{align*}
    The intractability of the Jaeger integral prohibits the derivation of an analytical expression for the moments of the GH process. However, for sufficiently small truncation levels \(\epsilon\), the use of asymptotic moment expansions provides useful approximations. For now, we restrict our analysis to the parameter range \(|\lambda| \leq \frac{1}{2}\);  the range \(|\lambda| \geq \frac{1}{2}\) yields similar results.
    
    To obtain a lower bound on the expected value of the subordinator jumps, \(M_{Z_{\epsilon}}^{(1)}\), we use the bound \(\frac{1}{z|H_{|\lambda|}(z)|^{2}}\geq \frac{\pi}{2}\):
    \begin{align*}
         M_{Z_{\epsilon}}^{(1)} &= \int_{0}^{\epsilon}z\frac{e^{-\frac{\gamma^{2}z}{2}}}{z}\left[\max(0,\lambda) + \frac{2}{\pi^{2}}\int_{0}^{\infty}\frac{1}{z\left|H_{|\lambda|}(z)\right|^{2}}e^{-\frac{zy^{2}}{2\delta^{2}}}dy \right]dz\\
         &\geq \max(0,\lambda)\frac{2}{\gamma^{2}}\gamma\left(1, \frac{1}{2}\gamma^{2}\epsilon\right) + \frac{1}{\pi}\int_{0}^{\epsilon}e^{-\frac{\gamma^{2}}{2}z}\int_{0}^{\infty}e^{-\frac{zy^{2}}{2\delta^{2}}}dydz\\
         &= \frac{2\max(0,\lambda)}{\gamma^{2}}\gamma\left(1, \frac{1}{2}\gamma^{2}\epsilon\right) + \frac{\delta}{\gamma}{\rm erf}\left(\frac{\gamma\sqrt{\epsilon}}{\sqrt{2}}\right).
   \end{align*}
   Using the expansion for the ${\rm erf}$ function  in Section~\ref{section:NGProof} and the series expansion of the exponential function, yields the following lower bound,
    \begin{align}
        M_{Z_{\epsilon}}^{(1)} &\geq \frac{2\max(0,\lambda)}{\gamma^{2}}\gamma\left(1, \frac{1}{2}\gamma^{2}\epsilon\right) + \frac{\delta}{\gamma}{\rm erf}\left(\frac{\gamma\sqrt{\epsilon}}{\sqrt{2}}\right) \nonumber\\ &=\frac{2\max(0,\lambda)}{\gamma^{2}}\gamma\left(1, \frac{1}{2}\gamma^{2}\epsilon\right) + \frac{\delta}{\gamma}\frac{2}{\sqrt{\pi}}\sum_{n=0}^{\infty}{\frac{(-1)^{n}}{n!(2n+1)}\left(\frac{\gamma\sqrt{\epsilon}}{\sqrt{2}}\right)^{2n+1}},\nonumber
    \end{align}
    which, for $\epsilon\to 0$, is equal to:
    \begin{equation}
           \frac{\delta\sqrt{2}}{\sqrt{\pi}}\sqrt{\epsilon}+ \max(0,\lambda)\epsilon + \mathcal{O}\left(\epsilon^{\frac{3}{2}}\right).\label{eq:ExpectedGHEScLowerBound}
    \end{equation}
    A corresponding upper bound can be derived using the bound in Appendix~\ref{appendix:UpperLowerJaegerBounds}.
    We have,
    \begin{align}
        M_{Z_{\epsilon}}^{(1)}
         &=\max(0,\lambda)\int_{0}^{\epsilon}e^{-\frac{\gamma^{2}z}{2}}dz + \frac{2}{\pi^{2}}\int_{0}^{\epsilon}e^{-\frac{\gamma^{2}z}{2}}\int_{0}^{\infty}\frac{1}{z\left|H_{|\lambda|}(z)\right|^{2}}e^{-\frac{zy^{2}}{2\delta^{2}}}dydz\nonumber\\
         &\leq \frac{2}{\gamma^{2}}\max(0,\lambda)\gamma\left(1, \frac{1}{2}\gamma^{2}\epsilon\right) + \frac{2}{\pi^{2}}\int_{0}^{\epsilon}e^{-\frac{z\gamma^{2}}{2}}\left[\frac{\sqrt{\pi}\delta}{H_{0}\sqrt{2}}z^{-\frac{1}{2}} + \frac{z_{0}}{H_{0}}\left(\frac{1}{2|\lambda|}-1\right)\right]dz\nonumber\\
         &= \frac{2}{\gamma^{2}}\max(0,\lambda)\gamma\left(1, \frac{1}{2}\gamma^{2}\epsilon\right)+ \frac{\sqrt{2}\delta}{H_{0}\pi\sqrt{\pi}}\left[\frac{\sqrt{2\pi}}{\gamma}{\rm erf}\left(\frac{\sqrt{\epsilon}\gamma}{\sqrt{2}}\right)\right]+ \frac{4z_{0}}{\pi^{2}\gamma^{2}H_{0}}\left(\frac{1}{2|\lambda|}-1\right)\left(1-e^{-\frac{c\gamma^{2}}{2}}\right),
            \nonumber
    \end{align}
    where, as $\epsilon\to 0$, the last expression is equal to:
    \begin{equation}
         \frac{2\delta\sqrt{2}}{H_{0}\pi\sqrt{\pi}}\sqrt{\epsilon} + \left[\max(0,\lambda)+\frac{2z_{0}}{\pi^{2}H_{0}}\left(\frac{1}{2|\lambda|}-1\right)\right]\epsilon + \mathcal{O}\left(\epsilon^{\frac{3}{2}}\right). \label{eq:ExpectedGHEScUpperBound}
    \end{equation}
    Equations~(\ref{eq:ExpectedGHEScLowerBound}),~(\ref{eq:ExpectedGHEScUpperBound}) imply that, for small $\epsilon$, 
    $M_{Z_\epsilon}^{(1)}$ is approximately bounded above and below~by,
    \begin{equation*}
        \frac{\delta\sqrt{2}}{\sqrt{\pi}}\sqrt{\epsilon}
            \quad\mbox{and}\quad \frac{2\delta\sqrt{2}}{H_{0}\pi\sqrt{\pi}}\sqrt{\epsilon},\quad\mbox{respectively},
    \end{equation*}
    and we can therefore conclude that \(M_{Z_{\epsilon}}^{(1)} \sim \sqrt{\epsilon}\) as $\epsilon\to 0$,
    for \(\delta \neq 0\) and \(|\lambda| \leq \frac{1}{2}\).
    
    To characterise the behaviour of \(M_{Z_{\epsilon}}^{(2)}\), we again use the bound \(\frac{1}{z|H_{|\lambda|}(z)|^{2}}\geq \frac{\pi}{2}\), which gives,
    \begin{align}
        M_{Z_{\epsilon}}^{(2)} &= \max(0,\lambda)\int_{0}^{\epsilon}ze^{-\frac{\gamma^{2}}{2}z}dz+\frac{2}{\pi^{2}}\int_{0}^{\epsilon}ze^{-\frac{\gamma^{2}}{2}z}\int_{0}^{\infty}\frac{1}{z\left|H_{|\lambda|}(t)\right|^{2}}e^{-\frac{zy^{2}}{2\delta^{2}}}dydz \nonumber\\
        &\geq  \frac{4}{\gamma^{4}}\max(0,\lambda)\gamma\left(2, \frac{1}{2}\gamma^{2}\epsilon\right) + \frac{2\delta}{\gamma^{3}\sqrt{\pi}}\gamma\left(\frac{3}{2}, \frac{1}{2}\gamma^{2}\epsilon\right),\nonumber
    \end{align}
    where, as $\epsilon\to 0$, the last expression above is,
    \begin{equation}
    \frac{\delta}{\sqrt{2\pi}}\left(\frac{2}{3}\epsilon\sqrt{\epsilon}\right)+\frac{1}{2}\max(0,\lambda)\epsilon^{2} + \mathcal{O}\left(\epsilon^{\frac{5}{2}}\right)
        = \frac{\sqrt{2}\delta}{3\sqrt{\pi}}\epsilon\sqrt{\epsilon} + \frac{1}{2}\max(0,\lambda)\epsilon^{2} + \mathcal{O}(\epsilon^{\frac{5}{2}}).\label{eq:GIGVarSCLowerBound}
    \end{equation}
    For the corresponding upper bound we similarly have,
    \begin{align}
        M_{Z_{\epsilon}}^{(2)} &\leq\max(0,\lambda)\int_{0}^{\epsilon}ze^{-\frac{\gamma^{2}}{2}z}dz+ \frac{2}{\pi^{2}}\int_{0}^{\epsilon}ze^{-z\frac{\gamma^{2}}{2}}\left[\frac{\sqrt{\pi}\delta}{H_{0}\sqrt{2}}z^{-\frac{1}{2}} + \frac{z_{0}}{H_{0}}\left(\frac{1}{2|\lambda|}-1\right) \right]dt\nonumber\\
        &= \frac{4}{\gamma^{4}}\left[\max(0,\lambda)+\frac{2z_{0}}{\pi^{2}H_{0}}\left(\frac{1}{2|\lambda|}-1\right)\right]\gamma\left(2, \frac{1}{2}\gamma^{2}\epsilon\right) + \frac{\sqrt{2}\delta}{\pi\sqrt{\pi}H_{0}}\int_{0}^{\epsilon}t^{\frac{1}{2}}e^{-t\frac{\gamma^{2}}{2}}dt\nonumber\\
        &=\frac{4}{\gamma^{4}}\left[\max(0,\lambda)+\frac{2z_{0}}{\pi^{2}H_{0}}\left(\frac{1}{2|\lambda|}-1\right)\right]\gamma\left(2, \frac{1}{2}\gamma^{2}\epsilon\right) + \frac{4\delta}{\gamma^{3}\pi\sqrt{\pi}H_{0}}\gamma\left(\frac{3}{2}, \frac{1}{2}\gamma^{2}\epsilon\right),\nonumber
    \end{align}
    where the last expression above, for $\epsilon\to 0$, is equal to,
    \begin{equation}
        \frac{2\sqrt{2}\delta}{3\pi\sqrt{\pi}H_{0}}\epsilon\sqrt{\epsilon} +\frac{1}{2}\left[\max(0,\lambda)+\frac{2z_{0}}{\pi^{2}H_{0}}\left(\frac{1}{2|\lambda|}-1\right)\right]\epsilon^{2} + \mathcal{O}(\epsilon^{\frac{5}{2}}).\label{eq:GIGVarSCUpperBound} 
    \end{equation}
    From~(\ref{eq:GIGVarSCUpperBound}),~(\ref{eq:GIGVarSCLowerBound}) we have that, for small $\epsilon$,
        $M_{Z_\epsilon}^{(2)}$ is approximately bounded above and below by,
    \begin{equation*}
        \frac{\sqrt{2}\delta}{3\sqrt{\pi}}\epsilon\sqrt{\epsilon}
        \quad
            \mbox{and}
        \quad \frac{2\sqrt{2}\delta}{3\pi\sqrt{\pi}H_{0}}\epsilon\sqrt{\epsilon}, \quad\mbox{respectively},
    \end{equation*}
    and, therefore,
    \(M_{Z_{\epsilon}}^{(2)} \sim \epsilon\sqrt{\epsilon}\), for all \( \delta \neq 0, \left|\lambda\right|\leq \frac{1}{2}\ \).
    
    Finally, in the case \(\left|\lambda\right|\geq \frac{1}{2}\), the upper and lower bounds on \(\frac{1}{z|H_{|\lambda|}(z)|^{2}}\) are reversed, and hence so are the bounds in \((\ref{eq:ExpectedGHEScLowerBound}),(\ref{eq:ExpectedGHEScUpperBound}),(\ref{eq:GIGVarSCLowerBound}),(\ref{eq:GIGVarSCUpperBound})\), so that $M_{Z_{\epsilon}}^{(1)} \sim \sqrt{\epsilon}$ and $M_{Z_{\epsilon}}^{(2)} \sim \epsilon\sqrt{\epsilon},$ for all $\delta \neq 0$.

    \section{Code repository}
    The code used for all the simulations it his paper can be found at \url{https://github.com/marcostapiac/PyLevy}.

    \section{Acknowledgements}
    We wish to thank the anonymous reviewers for their careful reading of our original submission and their useful comments. In particular, we thank reviewer I for suggesting the direct verification of requirement~(III) from~\cite[Theorem~V.19]{Pollard_1984} in Theorem~\ref{theorem:NVMConvergenceGaussian}.
\bibliography{bibliography.bib}
\bibliographystyle{plain}

\end{document}